\documentclass[11pt]{amsart}  

\usepackage{amsbsy,amsmath,amsthm,amssymb,color,verbatim,ulem}
\usepackage{fourier}
\usepackage{mathtools, mathdots}
\usepackage{soul}
\usepackage{cancel,shuffle}
\usepackage{stackrel}
\usepackage{fdsymbol}
\usepackage{multicol}
\usepackage[all]{xy}
\usepackage{xargs}

\usepackage{float}
\usepackage{bbm}
\usepackage{fullpage,cancel,hyperref,cleveref}
\usepackage{float}
\usepackage{wrapfig}
\usepackage[dvipsnames]{xcolor}
\usepackage{graphicx}
\usepackage{subcaption}
\usepackage{multicol}
\usepackage{multirow}
\usepackage{longtable}
\usepackage{makecell}
\usepackage{supertabular}
\usepackage{thm-restate}

\usepackage{tikz}
\definecolor{darkgreen}{cmyk}{.9,0,.9,.2}
\definecolor{midgray}{gray}{0.60}
\definecolor{lightgray}{gray}{0.90}
\definecolor{lmgray}{gray}{0.70}
\usepackage{colortbl}
\usepackage{standalone}
\usetikzlibrary{arrows.meta,calc,decorations.markings,math,arrows.meta}

\tikzset{->-/.style={decoration={
  markings,
  mark=at position #1 with {\arrow{>}}},postaction={decorate}}}
\tikzset{-<-/.style={decoration={
  markings,
  mark=at position #1 with {\arrow{<}}},postaction={decorate}}}

\usepackage{algorithm}
\usepackage{algpseudocode}
\usepackage{stackengine,wasysym}
\usepackage{lipsum}
\newcommand\m[1]{\begin{pmatrix}#1\end{pmatrix}}

\newtheorem{thm}{Theorem}[section] 
\newtheorem*{thm*}{Theorem}
\newtheorem{prop}[thm]{Proposition}
\newtheorem{lem}[thm]{Lemma} 
\newtheorem{cor}[thm]{Corollary}

\theoremstyle{definition}                 
\newtheorem{defn}[thm]{Definition}

\theoremstyle{remark}                      
\newtheorem{exmp}[thm]{Example}  
\newtheorem{rem}[thm]{Remark}

\numberwithin{equation}{section}

\newcommand{\calG}{\mathcal{G}}

\def\multiset#1#2{\ensuremath{\left(\kern-.3em\left(\genfrac{}{}{0pt}{}{#1}{#2}\right)\kern-.3em\right)}}

\def\multiset#1#2{\ensuremath{\left(\kern-.3em\left(\genfrac{}{}{0pt}{}{#1}{#2}\right)\kern-.3em\right)}}

\usepackage[sorting=nyt]{biblatex}
\appto{\bibsetup}{\sloppy}

\title{Higher $q$-Continued Fractions and Dimers on Band Graphs}
\author{Wonwoo Kang}
\address[W. Kang]{International Center for Mathematical Sciences, Institute of Mathematics and Informatics, Bulgarian Academy of Sciences, Acad. G. Bonchev Str., Bl. 8, Sofia 1113, Bulgaria}
\email{\textcolor{blue}{\href{mailto:wonk@math.bas.bg}{wonk@math.bas.bg}}}
\author{Yucong Lei}
\address[Y. Lei]{Department of Mathematics, University of Michigan, Ann Arbor, MI 48109}
\email{\textcolor{blue}{\href{mailto:leiyc@umich.edu}{leiyc@umich.edu}}}

\begin{document}
\begin{abstract} 
In this paper, we explore the theory of higher dimers on band graphs. First, we provide a combinatorial interpretation for the trace of the $q$-deformed higher continued fraction matrices by showing that with respect to a $q$-weighting on edges, the trace gives the dimer partition function on the set of good higher dimers, which generalizes the notion of good matchings. We also show that the set of good higher dimer covers form a distributive lattice with respect to face flips on square faces. Finally, we attempt to generalize the symmetry result on circular fence posets to the case of good higher dimers, by showing that the dimer partition on a certain family of band graphs are palindromic, in particular, through an approach fitting in the context of dimer theory.
\end{abstract}
\maketitle

\tableofcontents

\section{Introduction}

The theory of cluster algebras, introduced by Fomin and Zelevinsky~\cite{FZ2002}, has inspired a vast body of work dedicated to the combinatorial realization of cluster variables. For cluster algebras originating from surfaces, the seminal work of Musiker, Schiffler, and Williams~\cite{musiker2011positivity} established that expansion formulas for cluster variables, which consists of cluster variables and coefficient variables, can be explicitly computed as sums over perfect matchings of specific planar graphs known as snake graphs. This combinatorial framework shows a deep connection to the theory of $q$-rational numbers and $q$-continued fractions~\cite{CS20, MGO20}. By specializing cluster variables to 1 and coefficient variables to $q$, the expansion formulas for snake graphs yield $q$-analogues of continued fractions. Recently, Ovsienko~\cite{ovsienko2025qrationalsdimers} employed dimer models to derive these same $q$-continued fractions. However, this approach—utilizing the dimer partition function—results in a shift in the power of $q$ when compared to the classical rank generating function approach.

In recent years, the study of snake graph combinatorics has been generalized beyond simple perfect matchings. Specifically, Musiker, Ovenhouse, Schiffler, and Zhang~\cite{Musiker2023HigherDC} studied higher dimer covers on snake graphs---which we refer to as $m$-dimer covers---to define an $SL_{m+1}$ analog of continued fractions for all $m\geq 1$. These $SL_{m+1}$ analogs of continued fractions also has a $q$-deformed version~\cite{BOSZ26}. The connection to double dimer cover to super Teichm\"uller theory is given in~\cite{musiker2022double}. Given these developments, it is natural to investigate how these higher-order structures are manifested within the dimer model framework proposed by Ovsienko.

To investigate the structural properties of these $m$-dimer covers, we first prove that the set of $m$-dimer covers on a planar bipartite graph forms a distributive lattice. We emphasize that our proof technique departs from the classical approach established by Propp~\cite{proppDimerThy}. As an immediate corollary, we obtain that the $m$-dimer covers on a snake graph, which we term $m$-dimer covers, also possess a distributive lattice structure.

Furthermore, while much of the existing literature focuses on snake graphs, the extension to band graphs~\cite{Apruzzese25,musiker2011positivity,musiker2013bases} (and their closely related circular fence posets) remains an area of active development. Introduced to handle generalized arcs and closed curves, band graphs describe elements that are vital to the broader algebraic structure of the cluster algebra such as bangle bases~\cite{musiker2013bases}. The expansions of closed curves require the more specific framework of ``good matchings'' rather than ordinary perfect matchings. Together, snake and band graphs connect the curves on the surfaces into a combinatorial expression.

In this paper, we develop further connections between higher dimer cover partition functions on band graphs and matrices of $q$-deformed higher continued fractions, building upon Ovsienko's dimer model framework~\cite{ovsienko2025qrationalsdimers}. Our main result extends this connection to the setting of ``good'' dimer covers on band graphs. Specifically, we show that the partition function of ``good'' higher dimer covers is elegantly realized by the trace of the matrix product corresponding to higher $q$-continued fractions, up to multiplication by $q^a$. 

\begin{thm*}
    Let $$R_{m}(q)=\m{q^m & q^{m-1} & \cdots & q & 1 \\
    0 & q^{m-1} & \cdots & q & 1 \\
    \vdots & & \ddots & \vdots & \vdots \\
    0 & 0 & \cdots & q & 1 \\
    0 & 0 & \cdots & 0 & 1 } \quad \text{and} \quad L_{m}(q)=\m{q^m & 0 & \cdots & 0 & 0 \\
    q^m & q^{m-1} & \cdots & 0 & 0 \\
    \vdots & \vdots & \ddots & \vdots & \vdots \\
    q^m & q^{m-1} & \cdots & q & 0 \\
    q^m & q^{m-1} & \cdots & q & 1}.$$ Then, $\text{tr}(R_m^{a_1}(q)L_m^{a_2}(q)\cdots R_m^{a_{2n-1}}(q)L_m^{a_{2n}}(q))$ is the sum of good $m$-dimer covers on the band graph \\
    $\overline{\mathcal{G}}[a_1,a_2,...,a_{2n-1},a_{2n}]$ under the SW-weighting (to be defined in Section~\ref{subsec: prelim_higher_dimer}), multiplied by some $q^a$. 
\end{thm*}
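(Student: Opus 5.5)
We would prove the statement by a transfer-matrix argument. The snake-graph analogue is the starting point: by the results of Musiker--Ovenhouse--Schiffler--Zhang and their $q$-deformation, together with Ovsienko's dimer framework, the $(i,j)$ entry of $R_m^{a_1}(q)L_m^{a_2}(q)\cdots L_m^{a_{2n}}(q)$ counts, with SW-weights, the $m$-dimer covers of the snake graph $\mathcal{G}[a_1,\dots,a_{2n}]$ with prescribed boundary data.

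The boundary data are as follows. On the first and last edges of the snake graph there are $m+1$ possible states. These are indexed by how many of the $m$ dimers at the relevant boundary vertex use the boundary edge versus the other edges.

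The band graph $\overline{\mathcal{G}}[a_1,\dots,a_{2n}]$ is obtained by identifying the first and last boundary edges of this snake graph.

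\textbf{Step 1 (local transfer matrices).} We first decompose the snake graph into its tiles. We show that adding one tile in a straight (zigzag) continuation multiplies the state vector by $R_m(q)$ or $L_m(q)$, according to the parity of the block. Concretely, we fix an edge $e$ crossing the snake, let $k\in\{0,\dots,m\}$ be the multiplicity of $e$ in the $m$-dimer cover, and check on a single square that:
\begin{itemize}
\item the admissible transitions $k\to k'$ are exactly those with $k'\ge k$ (resp. $k'\le k$);
\item the SW-weight of the square contributes $q^{m-k}$ (resp. $q^{\cdot}$), matching the entries of $R_m$ (resp. $L_m$).
\end{itemize}
This is a finite local computation on one tile. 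Note that the triangular shape of $R_m$ and $L_m$ encodes the monotonicity of multiplicities along a row of the snake.

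\textbf{Step 2 (gluing).} Multiplying the local matrices gives a matrix whose $(i,j)$ entry is the weighted sum over $m$-dimer covers of the snake graph with state $i$ at the start and state $j$ at the end. Taking the trace imposes $i=j$, which is exactly compatibility under the band identification. So the trace is a sum over $m$-dimer covers of $\overline{\mathcal{G}}$. It remains to see which covers arise and with what weight.

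\textbf{Step 3 (identifying good covers).} This is the main obstacle. On band graphs, the diagonal states $i=j$ of the cut snake do not correspond bijectively to all $m$-dimer covers of the band graph. The discrepancy comes from covers that wrap around the band, i.e. those obtained from the cut by a global cyclic flip. We will show that the trace sums precisely over covers whose restriction to the cut edge is consistent with a snake cover, and that these are precisely the good $m$-dimer covers as defined in Section~\ref{subsec: prelim_higher_dimer}. For $m=1$ this should reduce to the known statement that good matchings are those restricting to perfect matchings of the cut snake graph with matching ends.

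For general $m$, the plan is to decompose an $m$-dimer cover of the band into a superposition and apply the $m=1$ characterization layer by layer. If that fails, we would verify directly that the goodness condition is equivalent to the boundary multiplicity at the glued edge being read off consistently from both sides.

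\textbf{Step 4 (the factor $q^a$).} The SW-weight of a cover on the band differs from the product of tile weights on the snake only by contributions from the glued edge and the boundary edges. Those contributions are counted once per side in the matrix product, so the discrepancy is a fixed power $q^a$ independent of the cover, with $a$ depending only on $m$ and $\sum a_i$. We would determine $a$ by comparing the minimal cover on each side.
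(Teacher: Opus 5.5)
There is a genuine gap at exactly the step you flag as the main obstacle, and it comes from misreading what the matrix entries count. The snake-graph input (Theorem~\ref{thm: rank_gen_of_snakes}, from \cite{BOSZ26}) does not index entries by exact boundary states. The $(i,j)$ entry is $q^{m+1-j}$ times the rank generating function of $m$-dimer covers of $\mathcal{G}[a_1,\dots,a_{2n}]$ with $h(f_1)\le m+1-i$ and $h(f_{\mathrm{last}})\le m+1-j$, which are cumulative constraints. Also, $\overline{\mathcal{G}}[a_1,\dots,a_{2n}]$ is glued from $\mathcal{G}[a_1,\dots,a_{2n}+1]$, which has one more tile than the snake graph the entries refer to. So the trace is not ``equal states at the two ends of a cut''. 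Already for $m=1$ it is $q$ times the generating function of \emph{all} perfect matchings of $\mathcal{G}[a_1,\dots,a_{2n}]$, plus that of the matchings containing both end edges. It is not (both in)$+$(both out); for $\overline{\mathcal{G}}[1,1]$ your reading gives $2$ instead of the $3$ good matchings.

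Step 1 fails for a related reason: the multiplicity of a crossing edge is not a transfer-matrix state. On $\mathcal{B}_3$ with $m=1$, the two interior edges take multiplicity pairs $(0,0),(1,0),(0,1),(0,0)$ over the four perfect matchings, which is neither monotone nor injective; the monotone quantity is the face height. Hence Step 2's conclusion is unfounded, and Step 3 is left repairing a discrepancy your framework never located. Neither proposed repair works. Good $m$-dimer covers are multiplicity functions whose decompositions into $m$ good matchings are far from unique, so a layer-by-layer count overcounts. And goodness is not a condition on the glued edge $(ab)$ itself.

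The missing idea is a characterization plus a bijection. Let $e_a,e_b$ be the edges of the last tile incident to $a,b$ other than $e_c=(ab)$, and $e_d$ the fourth edge of that tile. A band cover is good iff $\operatorname{mult}_M(e_a)=\operatorname{mult}_M(e_b)$, equivalently $h_M(f_0)=0$. Your intuition that bad covers differ by a flip around the core of the band is right in this sense.

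The paper sorts good covers by $l=\operatorname{mult}_M(e_a)$ and performs $l$ down-flips on the last tile, subtracting $l$ on $e_a,e_b$ and adding $l$ on $e_c,e_d$. This is a bijection onto covers of $\mathcal{G}[a_1,\dots,a_{2n}]$ with $h(f_1),h(f_{\mathrm{last}})\le m-l$, which is exactly the $(l+1,l+1)$ entry; summing over $l$ gives the trace.

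This also shows your Step 4 is not automatic. The minimal cover in class $l$ has weight $q^{-m(\sum_i a_{2i}-1)-l}$, which depends on $l$. The uniform shift $q^{-m\sum_i a_{2i}}$ appears only after cancelling against the prefactor $q^{m-l}$ of the diagonal entry, and it involves $\sum_i a_{2i}$, not $\sum_i a_i$.

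Alternatively, you can keep a genuine transfer-matrix proof by taking as state the height in $\{0,\dots,m\}$ of each of the $\sum_i a_i$ square faces of the band. Then $R_m,L_m$ encode the fence relations between consecutive faces, $Q_m$ supplies the weight $q^{h}$, and the trace sums over cyclic height sequences. These are exactly the covers with $h(f_0)=0$, i.e.\ the good ones, once you check the orientation of each relation against the SW conventions.
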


While the rank generating functions of perfect matchings for single dimer covers on band graphs are known to be symmetric~\cite{fence_posets}, we demonstrate that this property fails for double dimer covers on general band graphs. To resolve this, we restrict our attention to a specific family of band graphs parameterized by an ``almost palindromic'' sequence of positive integers. For this class, we prove that the $m$-dimer partition function $\overline{\mathcal{Z}}_{\overline{\mathcal{G}},m}(q)$ is indeed palindromic. We provide two independent proofs: a concise algebraic proof leveraging the trace properties of matrices, and a purely combinatorial proof.

Lastly, we connect our dimer theoretical results to the broader framework of cluster algebras from surfaces~\cite{FZ2007, MMSV24}. Building on the combinatorial realization of $\mathbf{g}$-vectors in posets~\cite{banaian2024skein}, we translate these constructions into the language of snake graphs. By analyzing the flippable tiles within the maximal and minimal perfect matchings of a snake graph's distributive lattice, we provide a concrete geometric interpretation of the $\mathbf{g}$-vector components $\mathbf{a}_\gamma$, $\mathbf{b}_\gamma$, and the boundary vector $\mathbf{r}_\gamma$. Crucially, we characterize the exact power of $q$ that bridges the dimer partition function and the rank generating function by calculating the $q$-weight of the minimal matching $\hat{0}$ in the higher dimer cover $\mathcal{D}_m(G)$.

The paper is organized as follows. In Section~\ref{sec:background}, we recall the notions of snake graphs and band graphs, as well as the necessary background on $m$-dimer covers and Ovsienko's construction. In Section~\ref{sec:good_higher_dimer}, we study good higher dimer covers on band graphs and present our main theorem connecting their partition functions to the traces of $q$-deformed matrix products. Section~\ref{sec: palindromy} investigates the palindromic properties of these partition functions for almost palindromic sequences, providing both algebraic and combinatorial proofs. In Section~\ref{sec: connections_to_cluster}, we relate our dimer-theoretic results to the $\mathbf{g}$-vectors of cluster algebras. Finally, in Section~\ref{sec: future}, we conclude with a discussion of open questions and future directions, including the unimodality of rank generating functions.

\section*{Acknowledgements}

W.K was supported by the Bulgarian Science Fund with grant no KP-06- N92/5, the Bulgarian Ministry of Education and Science with grant DO1-239/10.12.2024, and the Simons Foundation with grant SFI-MPS-T-Institutes-00007697. Y.L. was supported by NSF DMS-2246570. The authors thank Amanda Burcroff, Gregg Musiker, and Nick Ovenhouse for helpful discussions. The second author also thanks David Speyer for helpful conversations on Section~\ref{sec: palindromy}.

\section{Background}\label{sec:background}

\subsection{Snake Graphs and Band Graphs}

Snake graphs serve as the fundamental combinatorial bridge between the geometry of bordered surfaces and the algebraic structure of cluster algebras. In the work of Musiker, Schiffler, and Williams~\cite{musiker2011positivity}, it was established that for any cluster algebra associated with a surface $\mathcal{S}$, every cluster variable $x_\gamma$ corresponding to an arc $\gamma$ (or a closed curve $\ell$) can be computed explicitly using a snake graph $\mathcal{G}_\gamma$ (or a band graph $\mathcal{G}_\ell$). The construction of $\mathcal{G}_\gamma$ is governed by the crossing sequence of the arc $\gamma$ with a fixed initial triangulation $\mathcal{T}$. Each intersection between $\gamma$ and an edge of $\mathcal{T}$ corresponds to a square tile in the snake graph, where the relative orientation of the triangles in $\mathcal{T}$ dictates the gluing pattern—either horizontal or vertical—of successive tiles.

By utilizing this combinatorial realization, the authors were able to resolve the positivity conjecture for cluster algebras of surface type and provide explicit expansion formulas for cluster variables. Beyond these initial applications, snake graphs have revealed deep connections across various algebraic structures. For instance, \c{C}anak\c{c}{\i} and Schiffler~\cite{CS20} established a formal correspondence between the combinatorial structure of snake graphs and the theory of $q$-continued fractions, further expanding the utility of these objects in number theory.

A \textbf{snake graph} $\mathcal{G}$ is a connected planar graph consisting of a sequence of square tiles $\{\mathcal{G}_1, \mathcal{G}_2, \dots, \mathcal{G}_n\}$ for $n \geq 1$. For each $i \in \{1, \dots, n-1\}$, the tiles $\mathcal{G}_i$ and $\mathcal{G}_{i+1}$ share exactly one edge $e_i$, referred to as an \textbf{interior edge}. Throughout the paper, we think of $\mathcal{G}$ as embedded in the plane $\mathbb{R}^2$ in such a way that each square tile has its sides aligned with the coordinate axes. To maintain the directed nature of the graph, this interior edge $e_i$ must be either the north edge of $\mathcal{G}_i$ (coinciding with the south edge of $\mathcal{G}_{i+1}$) or the east edge of $\mathcal{G}_i$ (coinciding with the west edge of $\mathcal{G}_{i+1}$). We refer the reader to the aforementioned literature for a more details of the properties of snake graphs.

Here, we state a definition of snake graph in terms of gluing "zigzag blocks," which will make it easier later to describe the edge weights on them.

\begin{defn}
    A \textbf{zigzag block} is a planar graph formed by gluing a sequence of square tiles in the way that even-indexed square tiles are glued to the east of the previous tile, and odd-indexed square tiles are glued to the north edge of the previous tile. We denote a zigzag block consisting of $k$ tiles as $\mathcal{B}_k$, where $k=1,2,3,...$. See Figure~\ref{fig:small_examples_of_zigzag_blocks}. We also place a straight arrow or a bent arrow in each tile that points from its previous tile and to its next tile, and we set the convention that the arrow in the first tile to point from its south edge, and the arrow in the last tile to be a straight arrow.
\end{defn}

\begin{figure}[h!]
    \centering
    \tikzset{every picture/.style={line width=0.75pt}} 

\begin{tikzpicture}[x=0.75pt,y=0.75pt,yscale=-1,xscale=1]

\draw   (23.27,86) -- (59.27,86) -- (59.27,122) -- (23.27,122) -- cycle ;
\draw   (135.27,86) -- (171.27,86) -- (171.27,122) -- (135.27,122) -- cycle ;
\draw   (207.27,86) -- (243.27,86) -- (243.27,122) -- (207.27,122) -- cycle ;
\draw   (243.27,86) -- (279.27,86) -- (279.27,122) -- (243.27,122) -- cycle ;
\draw   (243.27,50) -- (279.27,50) -- (279.27,86) -- (243.27,86) -- cycle ;
\draw [color={rgb, 255:red, 128; green, 128; blue, 128 }  ,draw opacity=1 ]   (41.27,118.99) -- (41.27,91.01) ;
\draw [shift={(41.27,89.01)}, rotate = 90] [color={rgb, 255:red, 128; green, 128; blue, 128 }  ,draw opacity=1 ][line width=0.75]    (10.93,-3.29) .. controls (6.95,-1.4) and (3.31,-0.3) .. (0,0) .. controls (3.31,0.3) and (6.95,1.4) .. (10.93,3.29)   ;
\draw   (99.27,86) -- (135.27,86) -- (135.27,122) -- (99.27,122) -- cycle ;
\draw [color={rgb, 255:red, 128; green, 128; blue, 128 }  ,draw opacity=1 ]   (117.27,120) -- (117.27,104) -- (133.94,104) ;
\draw [shift={(135.94,104)}, rotate = 180] [color={rgb, 255:red, 128; green, 128; blue, 128 }  ,draw opacity=1 ][line width=0.75]    (10.93,-3.29) .. controls (6.95,-1.4) and (3.31,-0.3) .. (0,0) .. controls (3.31,0.3) and (6.95,1.4) .. (10.93,3.29)   ;
\draw [color={rgb, 255:red, 128; green, 128; blue, 128 }  ,draw opacity=1 ]   (225.27,120) -- (225.27,104) -- (241.94,104) ;
\draw [shift={(243.94,104)}, rotate = 180] [color={rgb, 255:red, 128; green, 128; blue, 128 }  ,draw opacity=1 ][line width=0.75]    (10.93,-3.29) .. controls (6.95,-1.4) and (3.31,-0.3) .. (0,0) .. controls (3.31,0.3) and (6.95,1.4) .. (10.93,3.29)   ;
\draw [color={rgb, 255:red, 128; green, 128; blue, 128 }  ,draw opacity=1 ]   (137.98,104) -- (166.57,104) ;
\draw [shift={(168.57,104)}, rotate = 180] [color={rgb, 255:red, 128; green, 128; blue, 128 }  ,draw opacity=1 ][line width=0.75]    (10.93,-3.29) .. controls (6.95,-1.4) and (3.31,-0.3) .. (0,0) .. controls (3.31,0.3) and (6.95,1.4) .. (10.93,3.29)   ;
\draw [color={rgb, 255:red, 128; green, 128; blue, 128 }  ,draw opacity=1 ]   (261.27,82.99) -- (261.27,55.01) ;
\draw [shift={(261.27,53.01)}, rotate = 90] [color={rgb, 255:red, 128; green, 128; blue, 128 }  ,draw opacity=1 ][line width=0.75]    (10.93,-3.29) .. controls (6.95,-1.4) and (3.31,-0.3) .. (0,0) .. controls (3.31,0.3) and (6.95,1.4) .. (10.93,3.29)   ;
\draw [color={rgb, 255:red, 128; green, 128; blue, 128 }  ,draw opacity=1 ]   (244.94,104) -- (262.27,104) -- (262.27,88.67) ;
\draw [shift={(262.27,86.67)}, rotate = 90] [color={rgb, 255:red, 128; green, 128; blue, 128 }  ,draw opacity=1 ][line width=0.75]    (10.93,-3.29) .. controls (6.95,-1.4) and (3.31,-0.3) .. (0,0) .. controls (3.31,0.3) and (6.95,1.4) .. (10.93,3.29)   ;
\draw   (315.27,86) -- (351.27,86) -- (351.27,122) -- (315.27,122) -- cycle ;
\draw   (351.27,86) -- (387.27,86) -- (387.27,122) -- (351.27,122) -- cycle ;
\draw   (351.27,50) -- (387.27,50) -- (387.27,86) -- (351.27,86) -- cycle ;
\draw [color={rgb, 255:red, 128; green, 128; blue, 128 }  ,draw opacity=1 ]   (333.27,120) -- (333.27,104) -- (349.94,104) ;
\draw [shift={(351.94,104)}, rotate = 180] [color={rgb, 255:red, 128; green, 128; blue, 128 }  ,draw opacity=1 ][line width=0.75]    (10.93,-3.29) .. controls (6.95,-1.4) and (3.31,-0.3) .. (0,0) .. controls (3.31,0.3) and (6.95,1.4) .. (10.93,3.29)   ;
\draw [color={rgb, 255:red, 128; green, 128; blue, 128 }  ,draw opacity=1 ]   (352.94,104) -- (370.27,104) -- (370.27,88.67) ;
\draw [shift={(370.27,86.67)}, rotate = 90] [color={rgb, 255:red, 128; green, 128; blue, 128 }  ,draw opacity=1 ][line width=0.75]    (10.93,-3.29) .. controls (6.95,-1.4) and (3.31,-0.3) .. (0,0) .. controls (3.31,0.3) and (6.95,1.4) .. (10.93,3.29)   ;
\draw   (387.27,50) -- (423.27,50) -- (423.27,86) -- (387.27,86) -- cycle ;
\draw [color={rgb, 255:red, 128; green, 128; blue, 128 }  ,draw opacity=1 ]   (369.27,84) -- (369.27,68) -- (385.94,68) ;
\draw [shift={(387.94,68)}, rotate = 180] [color={rgb, 255:red, 128; green, 128; blue, 128 }  ,draw opacity=1 ][line width=0.75]    (10.93,-3.29) .. controls (6.95,-1.4) and (3.31,-0.3) .. (0,0) .. controls (3.31,0.3) and (6.95,1.4) .. (10.93,3.29)   ;
\draw [color={rgb, 255:red, 128; green, 128; blue, 128 }  ,draw opacity=1 ]   (389.98,68) -- (418.57,68) ;
\draw [shift={(420.57,68)}, rotate = 180] [color={rgb, 255:red, 128; green, 128; blue, 128 }  ,draw opacity=1 ][line width=0.75]    (10.93,-3.29) .. controls (6.95,-1.4) and (3.31,-0.3) .. (0,0) .. controls (3.31,0.3) and (6.95,1.4) .. (10.93,3.29)   ;
\draw   (451.27,89) -- (487.27,89) -- (487.27,125) -- (451.27,125) -- cycle ;
\draw   (487.27,89) -- (523.27,89) -- (523.27,125) -- (487.27,125) -- cycle ;
\draw   (487.27,53) -- (523.27,53) -- (523.27,89) -- (487.27,89) -- cycle ;
\draw [color={rgb, 255:red, 128; green, 128; blue, 128 }  ,draw opacity=1 ]   (469.27,123) -- (469.27,107) -- (485.94,107) ;
\draw [shift={(487.94,107)}, rotate = 180] [color={rgb, 255:red, 128; green, 128; blue, 128 }  ,draw opacity=1 ][line width=0.75]    (10.93,-3.29) .. controls (6.95,-1.4) and (3.31,-0.3) .. (0,0) .. controls (3.31,0.3) and (6.95,1.4) .. (10.93,3.29)   ;
\draw [color={rgb, 255:red, 128; green, 128; blue, 128 }  ,draw opacity=1 ]   (488.94,107) -- (506.27,107) -- (506.27,91.67) ;
\draw [shift={(506.27,89.67)}, rotate = 90] [color={rgb, 255:red, 128; green, 128; blue, 128 }  ,draw opacity=1 ][line width=0.75]    (10.93,-3.29) .. controls (6.95,-1.4) and (3.31,-0.3) .. (0,0) .. controls (3.31,0.3) and (6.95,1.4) .. (10.93,3.29)   ;
\draw   (523.27,53) -- (559.27,53) -- (559.27,89) -- (523.27,89) -- cycle ;
\draw [color={rgb, 255:red, 128; green, 128; blue, 128 }  ,draw opacity=1 ]   (505.27,87) -- (505.27,71) -- (521.94,71) ;
\draw [shift={(523.94,71)}, rotate = 180] [color={rgb, 255:red, 128; green, 128; blue, 128 }  ,draw opacity=1 ][line width=0.75]    (10.93,-3.29) .. controls (6.95,-1.4) and (3.31,-0.3) .. (0,0) .. controls (3.31,0.3) and (6.95,1.4) .. (10.93,3.29)   ;
\draw   (523.27,17) -- (559.27,17) -- (559.27,53) -- (523.27,53) -- cycle ;
\draw [color={rgb, 255:red, 128; green, 128; blue, 128 }  ,draw opacity=1 ]   (523.94,71) -- (541.27,71) -- (541.27,55.67) ;
\draw [shift={(541.27,53.67)}, rotate = 90] [color={rgb, 255:red, 128; green, 128; blue, 128 }  ,draw opacity=1 ][line width=0.75]    (10.93,-3.29) .. controls (6.95,-1.4) and (3.31,-0.3) .. (0,0) .. controls (3.31,0.3) and (6.95,1.4) .. (10.93,3.29)   ;
\draw [color={rgb, 255:red, 128; green, 128; blue, 128 }  ,draw opacity=1 ]   (541.27,49.99) -- (541.27,22.01) ;
\draw [shift={(541.27,20.01)}, rotate = 90] [color={rgb, 255:red, 128; green, 128; blue, 128 }  ,draw opacity=1 ][line width=0.75]    (10.93,-3.29) .. controls (6.95,-1.4) and (3.31,-0.3) .. (0,0) .. controls (3.31,0.3) and (6.95,1.4) .. (10.93,3.29)   ;

\draw (29,132.4) node [anchor=north west][inner sep=0.75pt]    {$\mathcal{B}_{1}$};
\draw (121,130.4) node [anchor=north west][inner sep=0.75pt]    {$\mathcal{B}_{2}$};
\draw (230,128.4) node [anchor=north west][inner sep=0.75pt]    {$\mathcal{B}_{3}$};
\draw (343,128.4) node [anchor=north west][inner sep=0.75pt]    {$\mathcal{B}_{4}$};
\draw (477,128.4) node [anchor=north west][inner sep=0.75pt]    {$\mathcal{B}_{5}$};
\draw (584,78.4) node [anchor=north west][inner sep=0.75pt]    {$...$};

\end{tikzpicture}
    \caption{Examples of zigzag blocks; the grey arrows are not part of the graphs, but are there to illustrate the direction in which the zigzag block grows}
    \label{fig:small_examples_of_zigzag_blocks}
    \end{figure}

\begin{defn}\label{def: snake_graphs}
    Given a sequence of positive integers $a_1,a_2,...,a_n$, we define a \textbf{snake graph} by gluing the blocks $\mathcal{B}_{a_1}, \mathcal{B}_{a_2},...,\mathcal{B}_{a_n}$ or their images under reflection along any line with slope $1$: $\hat{\mathcal{B}}_{a_1}, \hat{\mathcal{B}}_{a_2},...,\hat{\mathcal{B}}_{a_n}$, in the unique way such that the arrow inside the first tile of $\mathcal{B}_{a_1}$ starts from its south edge, and the arrow in each tile after the gluing is aligned to either point to the north or to the east. We denote this snake graph as $\mathcal{G}[a_1,a_2,...,a_n+1]$. We label the left and right vertices of the south edge of the first tile to be $a,b$. 
\end{defn}

\begin{exmp}\label{exmp:snake_graph}
    In Figure~\ref{fig:gluing_blocks_to_snake}, we demonstrate how to obtain the snake graph. On the left, we position the blocks $\mathcal{B}_{a_1}, \mathcal{B}_{a_2}, \mathcal{B}_{a_3}, \mathcal{B}_{a_4}$ to align the arrows to point either north or east. Note the we need to do a reflection on $\mathcal{B}_{a_3}$ along lines of slope $1$ to get $\hat{\mathcal{B}}_{a_3}$. 
    \begin{figure}[h!]
    \centering
    \includestandalone[mode=tex, width=0.9\textwidth]{figures/gluing_blocks_to_snake}
    \caption{Gluing blocks to form the snake graph $\mathcal{G}[1,2,2,3]$}
    \label{fig:gluing_blocks_to_snake}
    \end{figure}
\end{exmp}

We can treat snake graphs as induced subgraphs of the integer lattice. Thus, $\mathcal{G}[a_1,a_2,...,a_n]$ is naturally a bipartite graph. If $n$ is even, we say $\mathcal{G}$ is a snake graph of even length. We fix the convention to color the vertex $a$ to be black.


\begin{defn}\label{defn: even_length_band_graph}
    Take a snake graph of even length, $\mathcal{G}[a_1,a_2,...,a_{2n}+1]$, with $a_1,...,a_{2n}\geq 1$. Denote the black and white vertices of the unique edge of the last tile that the arrow inside the last tile points to as $a',b'$ (See Figure~\ref{fig:gluing_blocks_to_snake}).
    Glue $(ab)$ with $(a'b')$ along the directions $a\rightarrow b$ and $a'\rightarrow b'$. The resulting graph is called a \textbf{band graph}, and we denote this band graph constructed from $\calG[a_1,\ldots,a_{2n}+1]$ as $\overline{\mathcal{G}}[a_1,a_2,...,a_{2n-1},a_{2n}]$. 
\end{defn}




\begin{exmp}
    As shown in ~\ref{fig:gluing_blocks_to_snake}, the rightmost figure illustrates the snake graph $\calG[1,2,2,3]$ with labeled vertices $a,b$ in the south edge of the first tile, and $a',b'$ in the unique edge in the last tile the last arrow points to. Figure~\ref{fig:interior_exterior_faces} shows the corresponding band graph $\overline{\calG}[1,2,2,2]$.
\end{exmp}

\begin{rem}\label{rem: planarity_of_even_band}
    The band graph constructed by Definition~\ref{defn: even_length_band_graph} is naturally a planar bipartite graph, due to the fact that it has an even length of input integer sequence $(a_1,a_2,...,a_{2n})$.
\end{rem}

Note that more general band graphs lie on either an annulus or a M\"obius band~\cite{musiker2013bases}. A band graph can be constructed from a snake graph of arbitrary length; if the underlying snake graph has an even length, the resulting band graph lies on an annulus, whereas if it has an odd length, it lies on a M\"obius band.

\begin{rem}
    In the existing literature, a \textbf{sign function} $\varepsilon$ is used to characterize the relative orientation of successive tiles within a snake graph~\cite{CS20}. Every snake graph admits exactly two distinct sign functions which correspond to the two possible band graph constructions investigated in this work. Up to symmetry, a snake graph is uniquely determined by its sequence of tiles and the values of the sign function on its interior edges. In contrast to this traditional sign-based derivation, our construction uses arrows, providing a more direct way of construction of snake graph and band graphs.
\end{rem}

\subsection{Preliminaries on Higher Dimer Covers}\label{subsec: prelim_higher_dimer}

\begin{defn}
    A \textbf{planar bipartite graph} $G$ is a graph which is finite, connected, and has a planar embedding such that each bounded face is a disc, and there is a proper coloring of its vertices by black and white. Denote the unbounded face of $G$ as $f_{\infty}$, and the vertex set, edge set, and face set (including the unbounded face) as $V(G), E(G), F(G)$, respectively.
\end{defn}

\begin{defn}
Let $G$ be a planar bipartite graph. A \textbf{perfect matching} $P$ of $G$ is a subset of $E(G)$ such that every vertex of $G$ is incident to exactly one edge of $P$. Let $m\in\mathbb{N}_+$. An \textbf{$m$-dimer cover} $M$ of $G$
is a multiset of edges of $G$ whose multiplicity function
\[
    \operatorname{mult}_M:E(G)\longrightarrow\mathbb{Z}_{\geq 0}
\]
satisfies
\[
    \sum_{\substack{e\in E(G)\\e\ni v}}
    \operatorname{mult}_M(e)=m \text{ for all } v\in V(G)
\]
where $e\ni v$ means that $e$ is incident to $v$.

We denote the set of all $m$-dimer covers of $G$ by
$\mathcal{D}_m(G)$. When $m=1$, an $m$-dimer cover is naturally
identified with a perfect matching; under this identification,
\[
    \operatorname{mult}_P(e)=
    \begin{cases}
        1, & e\in P,\\
        0, & e\notin P.
    \end{cases}
\]
\end{defn}

\begin{rem}\label{rem: alternative_view_of_m_covers}
One can also think of an $m$-dimer cover $M$ as picking $m$ perfect matchings on $G$ and add up their edge multiplicity functions~\cite{musiker2022double}.
\end{rem}

\begin{defn}
    We say a graph $G$ has property $(*)_m$ if for every $e$, there are some $m$-dimer covers, $M_1,M_2$, such that $\text{mult}_{M_1}(e) >0, \text{mult}_{M_2}(e) = 0$. 
\end{defn}

\begin{defn}
    Given a band graph $\overline{G}= \overline{\mathcal{G}}[a_1,a_2,...,a_{2n-1},a_{2n}]$ whose underlying snake graph is $G = \mathcal{G}[a_1,a_2,...,a_{2n-1},a_{2n}+1]$. With respect to a planar embedding of $\overline{G}$, we define the \textbf{interior face} of $\overline{G}$ to be the face consisting of the image all the south facing and east facing boundary edges in $G$ under the gluing, and denote it as $f_0$. We also define the \textbf{exterior face} of $\overline{G}$ to be the face consisting of the image all the north facing and west facing boundary edges in $G$ under the gluing, and denote it as $f_\infty$. 
\end{defn}

\begin{figure}[h!]
    \centering
    \includestandalone[mode=tex, width=0.6\textwidth]{figures/interior_exterior_faces}
    \caption{Left: The band graph $\overline{\mathcal{G}}[1,2,2,2]$ presented as a gluing of edges on $\mathcal{G}[1,2,2,3]$; Right: a planar embedding of $\overline{\mathcal{G}}[1,2,2,2]$, where the orange shaded face is the interior face, and the unbounded face is the exterior face}
    \label{fig:interior_exterior_faces}
    \end{figure}

\begin{rem}
    By Remark~\ref{rem: alternative_view_of_m_covers}, property $(*)_m$ is equivalent to property $(*)$ in ~\cite{dimer_knots}.
\end{rem}

\begin{lem}\label{lem: property_star_for_snakes}
    For any $a_1,...,a_{n}\geq 1$, the snake graph $G := \mathcal{G}[a_1,...,a_{n}+1]$ satisfies property $(*)_m$.
\end{lem}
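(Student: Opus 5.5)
The plan is to reduce property $(*)_m$ to the case $m=1$ and then prove that case for perfect matchings of snake graphs.

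\emph{Reduction to $m=1$.} By Remark~\ref{rem: alternative_view_of_m_covers}, for any perfect matchings $P_1,\dots,P_m$ the sum $\sum_i \operatorname{mult}_{P_i}$ is the multiplicity function of an $m$-dimer cover. Suppose that for a given edge $e$ there are perfect matchings $P$ and $P'$ with $e\in P$ and $e\notin P'$.
\begin{itemize}
\item Taking $M_1=P+P'+\cdots+P'$ gives an $m$-dimer cover with $\operatorname{mult}_{M_1}(e)=1>0$.
\item Taking $M_2=mP'$ gives an $m$-dimer cover with $\operatorname{mult}_{M_2}(e)=0$.
\end{itemize}
So it suffices to show the following: every edge of $G=\mathcal{G}[a_1,\dots,a_n+1]$ lies in some perfect matching, and is avoided by some other perfect matching.

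\emph{Each edge lies in a perfect matching.} I would argue by induction on the number of tiles, using the tile-by-tile construction of snake graphs via the gluing description in Definition~\ref{def: snake_graphs}. The statement concerns a single edge, so a direct construction is cleanest. Let $e$ be an edge of tile $\mathcal{G}_j$.
\begin{itemize}
\item If $e$ is a boundary edge of $\mathcal{G}_j$ parallel to the interior edges $e_{j-1}$ or $e_j$, or is one of them, cut the snake graph along $e$. The parts $\mathcal{G}_1\cup\dots\cup\mathcal{G}_{j-1}$ and $\mathcal{G}_{j+1}\cup\dots$ minus the endpoints of $e$ are each again snake graphs, or paths of even length. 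These admit perfect matchings by the standard fact that snake graphs have at least two perfect matchings, namely the two "boundary" matchings.
\item More concretely, the two minimal and maximal matchings $P_-$ and $P_+$ consist entirely of boundary edges, and together they cover every boundary edge of $G$. Every interior edge $e_i$ can be matched by using $e_i$ together with boundary matchings of the two snake subgraphs on either side of $\mathcal{G}_i\cup\mathcal{G}_{i+1}$, after removing the endpoints of $e_i$. One checks that what remains on each side is a snake graph, possibly with an added pendant path of even length, and so it is perfectly matchable.
\end{itemize}

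\emph{Each edge is avoided by a perfect matching.} Every boundary edge is avoided by one of $P_\pm$, since these two matchings are complementary on the boundary cycle of $G$: the boundary is an even cycle and $P_\pm$ are its two alternating matchings. Every interior edge $e_i$ is avoided by both $P_\pm$.

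The main obstacle is bookkeeping the first step for interior edges, namely verifying that deleting the endpoints of $e_i$ leaves perfectly matchable pieces regardless of whether the neighboring gluings are straight or bent. I would handle this by noting that $G$ minus $\mathcal{G}_i\cup\mathcal{G}_{i+1}$'s shared edge endpoints splits into two snake graphs, each with an attached even path. I would then match the path by alternating edges and the snake part by a boundary matching. If that case analysis gets messy, the fallback is the planar-dual fact that $\mathcal{D}_1(G)$ is connected under tile flips. Then for every tile one argues there is a matching in which that tile is flippable: this follows since $P_-$ and $P_+$ differ on every tile, so some chain of flips passes through each tile. Flipping a tile toggles each of its four edges, and every edge lies on some tile, which yields both membership and avoidance at once.
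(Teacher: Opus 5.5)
Your proof is correct in outline and takes a different route from the paper. However, the step you flagged as the main obstacle is justified incorrectly.

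The paper inducts on the number of tiles. A perfect matching either contains the far edge $(a'b')$ of the last tile, and then restricts to a matching of the snake graph with one tile fewer, or it contains both side edges of the last tile. Edges at $a',b'$ are handled by exhibiting matchings of both kinds, and all other edges by the inductive hypothesis extended by $(a'b')$. The passage from $m$ to $1$ is left implicit in Remark~\ref{rem: alternative_view_of_m_covers}.

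You argue globally instead. Since every vertex lies on the boundary, the boundary is an even Hamiltonian cycle, and its two alternating matchings $P_\pm$ are perfect matchings of $G$. These settle both membership and avoidance for every boundary edge, and avoidance for every interior edge, in one stroke. Your explicit reduction from $m$ to $1$ also fills in what the paper leaves tacit. What remains is that each interior edge lies in some perfect matching. The paper's induction needs exactly this fact too: with $d$ the number of tiles, a matching containing both side edges of the last tile is the same as a matching of $\mathcal{G}_1\cup\dots\cup\mathcal{G}_{d-1}$ containing $e_{d-1}$. So your route isolates the only nontrivial point.

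For that step, your description of the leftover pieces as ``a snake graph with an attached even path'' is wrong in general. If $\mathcal{G}_2,\mathcal{G}_3,\mathcal{G}_4$ are glued east, north, east, then deleting the endpoints of $e_3$ from $\mathcal{G}_1\cup\mathcal{G}_2\cup\mathcal{G}_3$ leaves the tile $\mathcal{G}_1$ with a pendant vertex at each endpoint of $e_1$. That is not a snake graph with a single attached path.

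No case analysis is needed. The edge $e_i$ is a boundary edge of both snake graphs $\mathcal{G}_1\cup\dots\cup\mathcal{G}_i$ and $\mathcal{G}_{i+1}\cup\dots\cup\mathcal{G}_d$, so each has one of its two boundary matchings containing $e_i$. These subgraphs have $2i+2$ and $2(d-i)+2$ vertices and together cover the $2d+2$ vertices of $G$. Hence they meet exactly in the endpoints of $e_i$, and the union of the two matchings is a perfect matching of $G$ containing $e_i$.

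Your fallback via flip-connectivity is valid only if that connectivity is imported from the snake-graph literature. Taking it from Theorem~\ref{thm: dist_lattice_of_m} would be circular, since that theorem assumes $(*)_m$.
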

\begin{proof}
    We use induction on the number of tiles , $a_1+a_2+...+a_n$. 
    
    When there is one single tile, the snake graph only has two perfect matchings $M_1, M_2$: $M_1$ contains the two horizontal edges of $G$ and $M_2$ contains the two vertical edges of $G$. Since $M_1\cup M_2 = E(G)$, each edge is contained in exactly one of $M_1$ and $M_2$.

    Now we suppose any snake graph with at most $N$ tiles satisfy property $(*)_m$. Consider a snake graph $G := \mathcal{G}[a_1,...,a_{n}+1]$ with $a_1+...+a_n = N+1$. Let $M$ be any perfect matching of $G$, then either it contains the edge $(a'b')$, or it contains both the edge incident to $a'$ and the incident to $b'$ other than $(a'b')$. In the first case, we have that $M\setminus\{(a'b')\}$ is a perfect matching on the induced subgraph $\mathcal{G}[a_1,a_2,...,a_n]$; in the second case, the remaining edges of $M$ excluding those two incident to $a',b'$ gives a perfect matching the subgraph $\mathcal{G}[a_1,...,a_{n-1}]$. Thus, any edge incident to $a'$ or $b'$ is contained in some matching but not some other matching based on the above two cases.
    
    Moreover, by inductive hypothesis, we know that $\mathcal{G}[a_1,a_2,...,a_n]$ satisfies property $(*)_m$. Thus, any edge of $G$ that is not incident to $a'$ or $b'$ is contained in some matching but not some other matching of $G$. 
\end{proof}

\begin{lem}\label{lem: property_star_for_planar_bands}
    Let $G := \overline{\mathcal{G}}[a_1,a_2,...,a_{2n-1}, a_{2n}]$ be a planar band graph, where $a_1,...,a_{2n}\geq 1$. Then $G$ satisfies property $(*)_m$.
\end{lem}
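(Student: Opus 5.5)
By Remark~\ref{rem: alternative_view_of_m_covers}, an $m$-dimer cover can be obtained by adding $m$ perfect matchings. So it suffices to prove property $(*)_1$: for every edge $e$ of $G$ there are perfect matchings $P_1,P_2$ of $G$ with $e\in P_1$ and $e\notin P_2$. Indeed, $M_1=m\cdot P_1$ then has $\operatorname{mult}_{M_1}(e)=m>0$ and $M_2=m\cdot P_2$ has $\operatorname{mult}_{M_2}(e)=0$. I will therefore reduce to perfect matchings of $G$ and deduce them from Lemma~\ref{lem: property_star_for_snakes} applied to the underlying snake graph $H=\mathcal{G}[a_1,\ldots,a_{2n}+1]$.

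\textbf{Setting up the correspondence.} The band graph $G$ is obtained from $H$ by identifying $a$ with $a'$ and $b$ with $b'$. The edges $(ab)$ and $(a'b')$ become a single edge $\bar e$, and every other edge of $H$ maps bijectively to an edge of $G$. The key observation is that a perfect matching $P$ of $H$ containing both $(ab)$ and $(a'b')$ descends to a perfect matching $\bar P$ of $G$ containing $\bar e$. Conversely, a perfect matching $Q$ of $H' := H\setminus\{a',b'\}$ that does not use $(ab)$ descends to a perfect matching of $G$ avoiding $\bar e$; this holds because $a,b$ then have their partners among the other edges, which are edges of $G$. Here $H'=\mathcal{G}[a_1,\ldots,a_{2n}]$ is again a snake graph, so Lemma~\ref{lem: property_star_for_snakes} applies to it. Every perfect matching of $G$ arises in one of these two ways.

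\textbf{Producing the required matchings for each edge $e\neq \bar e$.}
\begin{itemize}
\item To find a matching of $G$ \emph{avoiding} $e$: by Lemma~\ref{lem: property_star_for_snakes} on $H'$, there is a perfect matching of $H'$ avoiding $e$. If it also avoids $(ab)$, it descends directly. If it uses $(ab)$, add $(a'b')$ to obtain a matching of $H$ using both glued edges, and descend that instead. Either way the result is a matching of $G$ avoiding $e$.
\item To find a matching of $G$ \emph{containing} $e$: take a perfect matching of $H'$ containing $e$. If it uses $(ab)$, add $(a'b')$ as before. If it does not, it descends directly to a matching of $G$ containing $e$.
\end{itemize}
For $e=\bar e$ itself, use the two explicit descents: the "all short edges" matching containing $(ab)$ and $(a'b')$ contains $\bar e$, and a matching of $H'$ avoiding $(ab)$ (which exists by Lemma~\ref{lem: property_star_for_snakes} on $H'$) avoids $\bar e$.

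\textbf{Main obstacle.} The delicate point is the bookkeeping at the gluing. Edges of $H$ incident to $a'$ or $b'$ other than $(a'b')$ become edges incident to $a$ or $b$ in $G$, and they do not lie in $H'$. For these edges I will handle the containment case separately, using a matching of $H$ that contains both edges at $a',b'$. Such a matching exists by the second case in the proof of Lemma~\ref{lem: property_star_for_snakes}, provided we also require it to contain $(ab)$. Descending it gives a matching of $G$: the vertex $a=a'$ is covered by both $(ab)$ and the edge at $a'$, so this fails. Instead I would use a matching of $\mathcal{G}[a_1,\ldots,a_{2n-1}]$ that avoids $(ab)$, so that $a,b$ are covered only through the edges at $a',b'$. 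Verifying that such a matching exists is the remaining check. I expect it to follow from the parity and planarity of the even-length band (Remark~\ref{rem: planarity_of_even_band}) together with induction as in Lemma~\ref{lem: property_star_for_snakes}, using the two-matchings-per-tile base case.
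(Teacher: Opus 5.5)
\textbf{There is a gap: the only non-routine case is left open, and your sketch for it has the condition backwards.}

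Your reduction to $(*)_1$ is fine, and so is your treatment of every edge of $H'=\mathcal{G}[a_1,\ldots,a_{2n}]$, including $\bar e$. This matches the paper: the matchings you descend from $H'$ are exactly the perfect matchings of $G$ avoiding $e_{a'},e_{b'}$. The paper puts these in bijection with perfect matchings of $H'$ and then applies Lemma~\ref{lem: property_star_for_snakes}. What remains is to show that $e_{a'}$ and $e_{b'}$ lie in \emph{some} perfect matching of $G$. You leave this as ``the remaining check,'' so the proof is incomplete exactly where the lemma has content. Your sentence ``every perfect matching of $G$ arises in one of these two ways'' is also false. If it were true, $e_{a'}$ would lie in no perfect matching and the lemma would fail. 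The missing matchings are precisely those of $G$ that use $e_{a'}$ or $e_{b'}$.

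You discarded the right object for the wrong reason. A perfect matching $P$ of $H$ containing $(ab)$, $e_{a'}$ and $e_{b'}$ should be descended by \emph{deleting} $(ab)$, not by keeping it as $\bar e$; deleting it is exactly the rule in the definition of good matchings. Then $a=a'$ and $b=b'$ are each covered once, by $e_{a'}$ and $e_{b'}$.

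Your replacement does not work as stated. A perfect matching of $\mathcal{G}[a_1,\ldots,a_{2n-1}]$ that avoids $(ab)$ already covers $a$ and $b$ by other edges. Adding $e_{a'},e_{b'}$ then covers them twice in $G$. In the smallest case $\overline{\mathcal{G}}[1,1]$, the graph $\mathcal{G}[a_1]$ is just the edge $(ab)$, so no such matching exists. Yet $\{e_{a'},e_{b'}\}$ is a perfect matching of $G$ there.

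The fix is to take a perfect matching of $\mathcal{G}[a_1,\ldots,a_{2n-1}]$ that \emph{contains} $(ab)$. It exists by Lemma~\ref{lem: property_star_for_snakes}, or trivially when that graph is the single edge $(ab)$. Add the forced edges of the last block, which are the unique completion of $e_{a'},e_{b'}$ there, and then delete $(ab)$. This is the paper's second case: forced first and last blocks plus any perfect matching of $\mathcal{G}[a_2,\ldots,a_{2n-1}]$. No parity or planarity argument is needed.
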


\begin{proof}
    Let $e_{a'},e_{b'}$ be the edges in the last tile of the corresponding snake graph $\mathcal{G}[a_1,a_2,...,a_{2n-1},a_{2n}+1]$ incident to $a',b'$, respectively that are not the edge $(a'b')$. Any perfect matching $M$ of $G$ that does not contain either $e_{a'}$ or $e_{b'}$ must contain the edges of a perfect matching of the subgraph $\mathcal{G}[a_1,a_2,...,a_{2n}]$ (See Figure~\ref{fig:property_star_on_band} on the right). Conversely, any perfect matching of the subgraph $\mathcal{G}[a_1,a_2,...,a_{2n}]$ gives a perfect matching of $G$ that does not contain $e_{a'},e_{b'}$. 

    \begin{figure}[h!]
    \centering
    \includestandalone[mode=tex, width=0.4\textwidth]{figures/property_star_on_band}
    \caption{Different cases of perfect matchings on snake graphs}
    \label{fig:property_star_on_band}
    \end{figure}

    On the other hand, consider the case where $M$ contains both $e_{a'}, e_{b'}$. See Figure\ref{fig:property_star_on_band} on the left. Then, it must not contain the edge $(ab)$, and this forces $M$ to contain the set of edges of the unique perfect matching of the first block $\mathcal{B}_{a_1-1}$ not containing $(ab)$. Moreover, it also must contain the edges of the last block $\mathcal{B}_{a_{2n}}$ corresponding to the unique perfect matching containing both $e_{a'}, e_{b'}$. The remaining edges of $M$ should also form a perfect matching on the induced subgraph $\mathcal{G}[a_2,...,a_{2n-1}]$.

    Thus, by Lemma~\ref{lem: property_star_for_snakes}, any edge of the subgraph $\mathcal{G}[a_1,a_2,...,a_{2n-1},a_{2n}]$ is contained in some perfect matching of $G$ but not in some other; $e_{a'}, e_{b'}$ are contained in a perfect matching of $G$ of the second kind, but not the first kind.
\end{proof}
\begin{defn}
    Given a planar bipartite graph $G$ and a non-infinite face $f$ of $G$, with boundary edges $e_1,e_2,...,e_{2n}$ oriented clockwise, such that $e_{2i}$ are oriented from white to black, and $e_{2i-1}$ are oriented from black to white. Suppose $M$ is an $m$-dimer cover of $G$ such that $\text{mult}_{M}(e_{2i})\geq 1$ for all $i=1,...,n$, we can obtain another $m$-dimer cover $M'$ defined by $\text{mult}_{M'}(e_{2i-1}) = \text{mult}_{M}(e_{2i-1})+1$, $\text{mult}_{M'}(e_{2i}) = \text{mult}_{M}(e_{2i})-1$, and $\text{mult}_{M'}(e) = \text{mult}_{M}(e)$ for all other edges $e$, we call this operation an \textbf{up-flip}, and we define $M\lessdot M'$ when $M'$ is obtained from $M$ by an up-flip. Using up-flips as covering relations, one can define a poset $(\mathcal{D}_{m}(G), \leq)$, where $M\leq M'$ if there is a sequence of up-flips to transform $M$ into $M'$.
\end{defn}

\begin{figure}[h!]
    \centering
    \tikzset{every picture/.style={line width=0.75pt}} 

\begin{tikzpicture}[x=0.75pt,y=0.75pt,yscale=-1,xscale=1]

\draw   (147.27,190.99) -- (103.15,166.41) -- (102.39,115.92) -- (145.73,90.01) -- (189.85,114.59) -- (190.61,165.08) -- cycle ;
\draw    (145.73,90.01) -- (145.73,48.93) ;
\draw    (147.27,232.07) -- (147.27,190.99) ;
\draw    (189.85,114.59) -- (233.19,88.68) ;
\draw    (59.81,192.32) -- (103.15,166.41) ;
\draw    (58.27,91.33) -- (102.39,115.92) ;
\draw    (190.61,165.08) -- (234.73,189.67) ;
\draw  [fill={rgb, 255:red, 255; green, 255; blue, 255 }  ,fill opacity=1 ][line width=1.5]  (139.46,90.01) .. controls (139.46,86.54) and (142.27,83.73) .. (145.73,83.73) .. controls (149.2,83.73) and (152.01,86.54) .. (152.01,90.01) .. controls (152.01,93.47) and (149.2,96.28) .. (145.73,96.28) .. controls (142.27,96.28) and (139.46,93.47) .. (139.46,90.01) -- cycle ;
\draw  [fill={rgb, 255:red, 255; green, 255; blue, 255 }  ,fill opacity=1 ][line width=1.5]  (184.34,165.08) .. controls (184.34,161.62) and (187.15,158.81) .. (190.61,158.81) .. controls (194.08,158.81) and (196.89,161.62) .. (196.89,165.08) .. controls (196.89,168.55) and (194.08,171.36) .. (190.61,171.36) .. controls (187.15,171.36) and (184.34,168.55) .. (184.34,165.08) -- cycle ;
\draw  [fill={rgb, 255:red, 255; green, 255; blue, 255 }  ,fill opacity=1 ][line width=1.5]  (96.88,166.41) .. controls (96.88,162.94) and (99.69,160.13) .. (103.15,160.13) .. controls (106.62,160.13) and (109.43,162.94) .. (109.43,166.41) .. controls (109.43,169.88) and (106.62,172.69) .. (103.15,172.69) .. controls (99.69,172.69) and (96.88,169.88) .. (96.88,166.41) -- cycle ;
\draw  [fill={rgb, 255:red, 0; green, 0; blue, 0 }  ,fill opacity=1 ][line width=1.5]  (183.57,114.59) .. controls (183.57,111.12) and (186.38,108.31) .. (189.85,108.31) .. controls (193.31,108.31) and (196.12,111.12) .. (196.12,114.59) .. controls (196.12,118.06) and (193.31,120.87) .. (189.85,120.87) .. controls (186.38,120.87) and (183.57,118.06) .. (183.57,114.59) -- cycle ;
\draw  [fill={rgb, 255:red, 0; green, 0; blue, 0 }  ,fill opacity=1 ][line width=1.5]  (96.11,115.92) .. controls (96.11,112.45) and (98.92,109.64) .. (102.39,109.64) .. controls (105.85,109.64) and (108.66,112.45) .. (108.66,115.92) .. controls (108.66,119.38) and (105.85,122.19) .. (102.39,122.19) .. controls (98.92,122.19) and (96.11,119.38) .. (96.11,115.92) -- cycle ;
\draw  [fill={rgb, 255:red, 0; green, 0; blue, 0 }  ,fill opacity=1 ][line width=1.5]  (140.99,190.99) .. controls (140.99,187.53) and (143.8,184.72) .. (147.27,184.72) .. controls (150.73,184.72) and (153.54,187.53) .. (153.54,190.99) .. controls (153.54,194.46) and (150.73,197.27) .. (147.27,197.27) .. controls (143.8,197.27) and (140.99,194.46) .. (140.99,190.99) -- cycle ;
\draw   (407.27,189.99) -- (363.15,165.41) -- (362.39,114.92) -- (405.73,89.01) -- (449.85,113.59) -- (450.61,164.08) -- cycle ;
\draw    (405.73,89.01) -- (405.73,47.93) ;
\draw    (407.27,231.07) -- (407.27,189.99) ;
\draw    (449.85,113.59) -- (493.19,87.68) ;
\draw    (319.81,191.32) -- (363.15,165.41) ;
\draw    (318.27,90.33) -- (362.39,114.92) ;
\draw    (450.61,164.08) -- (494.73,188.67) ;
\draw  [fill={rgb, 255:red, 255; green, 255; blue, 255 }  ,fill opacity=1 ][line width=1.5]  (399.46,89.01) .. controls (399.46,85.54) and (402.27,82.73) .. (405.73,82.73) .. controls (409.2,82.73) and (412.01,85.54) .. (412.01,89.01) .. controls (412.01,92.47) and (409.2,95.28) .. (405.73,95.28) .. controls (402.27,95.28) and (399.46,92.47) .. (399.46,89.01) -- cycle ;
\draw  [fill={rgb, 255:red, 255; green, 255; blue, 255 }  ,fill opacity=1 ][line width=1.5]  (444.34,164.08) .. controls (444.34,160.62) and (447.15,157.81) .. (450.61,157.81) .. controls (454.08,157.81) and (456.89,160.62) .. (456.89,164.08) .. controls (456.89,167.55) and (454.08,170.36) .. (450.61,170.36) .. controls (447.15,170.36) and (444.34,167.55) .. (444.34,164.08) -- cycle ;
\draw  [fill={rgb, 255:red, 255; green, 255; blue, 255 }  ,fill opacity=1 ][line width=1.5]  (356.88,165.41) .. controls (356.88,161.94) and (359.69,159.13) .. (363.15,159.13) .. controls (366.62,159.13) and (369.43,161.94) .. (369.43,165.41) .. controls (369.43,168.88) and (366.62,171.69) .. (363.15,171.69) .. controls (359.69,171.69) and (356.88,168.88) .. (356.88,165.41) -- cycle ;
\draw  [fill={rgb, 255:red, 0; green, 0; blue, 0 }  ,fill opacity=1 ][line width=1.5]  (443.57,113.59) .. controls (443.57,110.12) and (446.38,107.31) .. (449.85,107.31) .. controls (453.31,107.31) and (456.12,110.12) .. (456.12,113.59) .. controls (456.12,117.06) and (453.31,119.87) .. (449.85,119.87) .. controls (446.38,119.87) and (443.57,117.06) .. (443.57,113.59) -- cycle ;
\draw  [fill={rgb, 255:red, 0; green, 0; blue, 0 }  ,fill opacity=1 ][line width=1.5]  (356.11,114.92) .. controls (356.11,111.45) and (358.92,108.64) .. (362.39,108.64) .. controls (365.85,108.64) and (368.66,111.45) .. (368.66,114.92) .. controls (368.66,118.38) and (365.85,121.19) .. (362.39,121.19) .. controls (358.92,121.19) and (356.11,118.38) .. (356.11,114.92) -- cycle ;
\draw  [fill={rgb, 255:red, 0; green, 0; blue, 0 }  ,fill opacity=1 ][line width=1.5]  (400.99,189.99) .. controls (400.99,186.53) and (403.8,183.72) .. (407.27,183.72) .. controls (410.73,183.72) and (413.54,186.53) .. (413.54,189.99) .. controls (413.54,193.46) and (410.73,196.27) .. (407.27,196.27) .. controls (403.8,196.27) and (400.99,193.46) .. (400.99,189.99) -- cycle ;

\draw (107,82.4) node [anchor=north west][inner sep=0.75pt]    {$\textcolor[rgb]{0.25,0.46,0.02}{m_{1}}$};
\draw (162,83.4) node [anchor=north west][inner sep=0.75pt]    {$\textcolor[rgb]{0.25,0.46,0.02}{m}\textcolor[rgb]{0.25,0.46,0.02}{_{2}}$};
\draw (189,131.4) node [anchor=north west][inner sep=0.75pt]    {$\textcolor[rgb]{0.25,0.46,0.02}{m}\textcolor[rgb]{0.25,0.46,0.02}{_{3}}$};
\draw (164,183.4) node [anchor=north west][inner sep=0.75pt]    {$\textcolor[rgb]{0.25,0.46,0.02}{m}\textcolor[rgb]{0.25,0.46,0.02}{_{4}}$};
\draw (111,180.4) node [anchor=north west][inner sep=0.75pt]    {$\textcolor[rgb]{0.25,0.46,0.02}{m}\textcolor[rgb]{0.25,0.46,0.02}{_{5}}$};
\draw (81,130.4) node [anchor=north west][inner sep=0.75pt]    {$\textcolor[rgb]{0.25,0.46,0.02}{m}\textcolor[rgb]{0.25,0.46,0.02}{_{6}}$};
\draw (347,84.4) node [anchor=north west][inner sep=0.75pt]    {$\textcolor[rgb]{0.25,0.46,0.02}{m}\textcolor[rgb]{0.25,0.46,0.02}{_{1}}\textcolor[rgb]{0.82,0.01,0.11}{+1}$};
\draw (453,130.4) node [anchor=north west][inner sep=0.75pt]    {$\textcolor[rgb]{0.25,0.46,0.02}{m}\textcolor[rgb]{0.25,0.46,0.02}{_{3}}\textcolor[rgb]{0.82,0.01,0.11}{+1}$};
\draw (349,176.4) node [anchor=north west][inner sep=0.75pt]    {$\textcolor[rgb]{0.25,0.46,0.02}{m}\textcolor[rgb]{0.25,0.46,0.02}{_{5}}\textcolor[rgb]{0.82,0.01,0.11}{+1}$};
\draw (417,84.4) node [anchor=north west][inner sep=0.75pt]    {$\textcolor[rgb]{0.25,0.46,0.02}{m}\textcolor[rgb]{0.25,0.46,0.02}{_{2}}\textcolor[rgb]{0.82,0.01,0.11}{-1}$};
\draw (427,174.4) node [anchor=north west][inner sep=0.75pt]    {$\textcolor[rgb]{0.25,0.46,0.02}{m}\textcolor[rgb]{0.25,0.46,0.02}{_{4}}\textcolor[rgb]{0.82,0.01,0.11}{-1}$};
\draw (325,130.4) node [anchor=north west][inner sep=0.75pt]    {$\textcolor[rgb]{0.25,0.46,0.02}{m}\textcolor[rgb]{0.25,0.46,0.02}{_{6}}\textcolor[rgb]{0.82,0.01,0.11}{-1}$};
\draw (140,130.4) node [anchor=north west][inner sep=0.75pt]    {$f$};
\draw (402,133.4) node [anchor=north west][inner sep=0.75pt]    {$f$};
\draw (265,135.4) node [anchor=north west][inner sep=0.75pt]    {$\lessdot $};

\end{tikzpicture}
    \caption{An illustration of an up-flip around $f$}
    \label{fig:up_flip}
\end{figure}

The goal of the remainder of this section is to better understand this poset structure and in particular, show the following:
\begin{thm}\label{thm: dist_lattice_of_m}
    If $G$ has property $(*)_m$, then $(\mathcal{D}_m(G), \leq )$ is a distributive lattice. Call $(\mathcal{D}_m(G), \leq )$ the \textbf{$m$-dimer lattice} of $G$.
\end{thm}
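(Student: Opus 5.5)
We will use height functions, following Propp's approach but working directly with multiplicities. Fix a reference $m$-dimer cover $M_0$; such a cover exists, for instance by taking $m$ copies of a perfect matching, and property $(*)_m$ ensures $\mathcal{D}_m(G)\neq\emptyset$. For $M\in\mathcal{D}_m(G)$, the difference $\operatorname{mult}_M-\operatorname{mult}_{M_0}$ is a flow on $G$: orient every edge from black to white, and at every vertex the net flow is $m-m=0$. By planar duality, a divergence-free flow on a planar graph is the dual-coboundary of a function on faces. We therefore define the height $h_M:F(G)\to\mathbb{Z}$ by $h_M(f_\infty)=0$. When crossing an edge $e$ with the black endpoint on the left, $h$ changes by $\operatorname{mult}_M(e)-\operatorname{mult}_{M_0}(e)$. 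This is well defined because around each vertex the total change is zero and each bounded face is a disc. The map $M\mapsto h_M$ is injective, and $h_M$ determines $M$ through its jumps across edges.

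An up-flip at a face $f$ changes exactly $h_M(f)$, by a fixed sign; with the chosen orientation this is $+1$, and no other face value changes. Next we characterize the image. A function $h$ with $h(f_\infty)=0$ lies in $\{h_M\}$ exactly when it satisfies local constraints: for each edge $e$ separating faces $f,g$, we need $0\le h(f)-h(g)+\operatorname{mult}_{M_0}(e)\le m$ (up to orientation sign). The lower bound is $\operatorname{mult}\ge 0$. The upper bound follows because any vertex forces each incident multiplicity to be at most $m$, and conversely any $h$ satisfying these constraints defines a nonnegative multiplicity summing to $m$ at each vertex, since the coboundary condition gives the vertex sums automatically. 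These constraints are difference constraints of the form $h(f)-h(g)\le c_{fg}$, and the set of integer solutions of a system of difference constraints is closed under pointwise $\max$ and $\min$. Hence $\{h_M\}$ is a sublattice of $\mathbb{Z}^{F(G)}$ under the componentwise order, and it is distributive because $\mathbb{Z}^{F}$ is.

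The remaining and main step is to show that the flip order $\le$ coincides with the componentwise order on height functions. One direction is clear: $M\le M'$ implies $h_M\le h_{M'}$. For the converse, suppose $h_M\le h_{M'}$ with $h_M\neq h_{M'}$. Among faces where $h_M<h_{M'}$, pick a face $f$ with $h_{M'}(f)-h_M(f)$ maximal, and among those, one with $h_M(f)$ minimal. We claim $M$ admits an up-flip at $f$, i.e. every edge $e_{2i}$ of $f$ has $\operatorname{mult}_M(e_{2i})\ge1$. Suppose instead $\operatorname{mult}_M(e)=0$ for such an edge $e$ separating $f$ from a neighbor $g$. This forces $h_M(g)$ to sit at its extreme position relative to $h_M(f)$. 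Comparing with the constraint for $h_{M'}$ then gives a contradiction with the extremal choice, since $g$ would have a larger gap or an equal gap with smaller $h_M$. The unbounded face is excluded because all heights vanish there. Performing the flip yields $M_1$ with $h_M<h_{M_1}\le h_{M'}$, and induction on $\sum_f(h_{M'}-h_M)(f)$ finishes. Property $(*)_m$ enters here: it guarantees that no edge has forced multiplicity, so that the face-based heights and this extremal argument are not obstructed by frozen edges, which would otherwise split the graph into regions. We expect the sign bookkeeping in this extremal-face argument to be the delicate part. If it fails, the fallback is to decompose $M$ and $M'$ into $m$ perfect matchings via Remark~\ref{rem: alternative_view_of_m_covers} and use Propp's lattice for perfect matchings on each layer.
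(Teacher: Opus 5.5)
Your overall structure (heights, the edge inequalities, closure under $\max/\min$, and reducing to ``flip order $=$ componentwise order'') is the paper's, but the key step has a genuine gap.

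Normalize so that an up-flip at $f$ raises $h(f)$; with the crossing rule you state, it actually lowers it. Let $e$ be an edge of $f$ that an up-flip at $f$ decreases, and let $g$ be the face across $e$. Then $\operatorname{mult}_N(e)=h_N(g)-h_N(f)+\operatorname{mult}_{M_0}(e)$ for every cover $N$. So $\operatorname{mult}_M(e)=0\le\operatorname{mult}_{M'}(e)$ forces $g$ to have the same maximal gap, with $h_M(g)=h_M(f)-\operatorname{mult}_{M_0}(e)$. Your tie-break therefore yields a contradiction only when $\operatorname{mult}_{M_0}(e)\ge 1$. When $\operatorname{mult}_{M_0}(e)=0$, which is typical if $M_0$ is $m$ copies of a perfect matching, $g$ ties with $f$ in both criteria and nothing is contradicted.

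This is not a bookkeeping issue. As written, your extremal step never uses $(*)_m$, and without $(*)_m$ the statement is false. For example, put a $4$-cycle $u_1u_2u_3u_4$ inside an octagon $c_1\cdots c_8$ and join its black vertices $u_1,u_3$ to the white vertices $c_2,c_6$. The white vertices $u_2,u_4$ force $u_1c_2$ and $u_3c_6$ to have multiplicity $0$ in every cover. Each of the two faces between the cycles has one of these frozen edges in each parity class, so it can never be flipped, and the flip poset is disconnected. Your remark that $(*)_m$ keeps frozen edges from splitting the graph names this phenomenon but gives no argument ruling it out.

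The missing idea, as in the paper's proof of Lemma~\ref{lem: compatibility_with_height_posets}, is to chase the ties around a cycle. Suppose no face of maximal gap is up-flippable. Step from such a face across a zero-multiplicity decreasing edge to a neighbour; that neighbour again has maximal gap, so it is not $f_\infty$. Repeating, finiteness yields a simple dual cycle $C$ whose crossed edges all have $\operatorname{mult}_M=0$ and are all crossed with the white endpoint on the same side. Lemma~\ref{lem: edge_mult_equations} then gives $W_C=B_C$. If, say, the black endpoints lie inside, every enclosed white vertex has all its edges inside. The multiplicity $mW_C$ at enclosed white vertices therefore lands entirely on enclosed black vertices, whose total is $mB_C=mW_C$. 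So the crossing edges vanish in every $m$-dimer cover, contradicting $(*)_m$.

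Alternatively, keep your tie-break but measure heights against a real-valued reference $\bar M$: an average of covers with $\operatorname{mult}_{\bar M}(e)>0$ for all $e$, which exists by $(*)_m$. Then $h_M(g)<h_M(f)$ strictly, and your extremal argument closes, since the gap $h_{M'}-h_M$ does not depend on the reference.

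Your layer-by-layer fallback is not yet an argument. It would need decompositions $M=\sum_i P_i$ and $M'=\sum_i P'_i$ with $P_i\le P'_i$ for every $i$, which you have not supplied.
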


Combining the above theorem with Lemma~\ref{lem: property_star_for_snakes}, we have the following:

\begin{cor}[special case of Theorem 8 in~\cite{claussen2025mixeddimermodelseuler}]
    The set of $m$-dimer covers on any snake graph forms a distributive lattice under the cover relation of up-flips.
\end{cor}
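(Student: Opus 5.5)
The corollary is immediate from Theorem~\ref{thm: dist_lattice_of_m} once we check its hypothesis, so the proof has two steps.

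\emph{Step 1: the graph is of the right type.} Let $G=\mathcal{G}[a_1,\ldots,a_n+1]$ be a snake graph. We regard it as an induced subgraph of the square lattice, as noted after Example~\ref{exmp:snake_graph}. Then $G$ is finite and connected, and it is bipartite with $a$ colored black. Each bounded face is a single square tile, hence a disc. So $G$ is a planar bipartite graph in the sense of Section~\ref{subsec: prelim_higher_dimer}, and the up-flip poset $(\mathcal{D}_m(G),\leq)$ is defined. We should also note that $\mathcal{D}_m(G)$ is nonempty: a snake graph has a perfect matching, and $m$ copies of it form an $m$-dimer cover (Remark~\ref{rem: alternative_view_of_m_covers}).

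\emph{Step 2: verify property $(*)_m$.} This is Lemma~\ref{lem: property_star_for_snakes}. The one point to watch is that the lemma's proof really shows the single-matching version: every edge lies in some perfect matching and misses some other one. To upgrade to $m$-dimer covers, fix an edge $e$ and choose perfect matchings $P_1\ni e$ and $P_2\not\ni e$. Let $M_1$ be the multiset sum of $m$ copies of $P_1$ and $M_2$ the sum of $m$ copies of $P_2$. Then $\operatorname{mult}_{M_1}(e)=m>0$ and $\operatorname{mult}_{M_2}(e)=0$, which is exactly property $(*)_m$.

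Applying Theorem~\ref{thm: dist_lattice_of_m} then shows that $(\mathcal{D}_m(G),\leq)$ is a distributive lattice. The only subtle point is the passage from perfect matchings to $m$-dimer covers in Step 2, and summing $m$ copies handles it.
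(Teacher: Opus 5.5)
Your proposal is correct and takes the same route as the paper, which obtains this corollary directly by combining Theorem~\ref{thm: dist_lattice_of_m} with Lemma~\ref{lem: property_star_for_snakes}. Your two additions only spell out details the paper leaves implicit. The first is the check that a snake graph is a planar bipartite graph with disc faces. The second is the passage from perfect matchings to $m$-dimer covers by taking $m$ copies of a matching, which the paper handles via Remark~\ref{rem: alternative_view_of_m_covers}.
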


\begin{rem}
    One must be cautious not to derive the above theorem from Theorem 2 in ~\cite{proppDimerThy}. Theorem 2 in ~\cite{proppDimerThy} proves distributive lattice structures for ``$\mathbf{d}$-factors," which are induced subgraphs of $G$ whose degree at each vertex $v\in V(G)$ is $\mathbf{d}(v)$. However, "$\mathbf{d}$-factors" are different from $m$-dimer covers because it does not allow edges with multiplicities.
\end{rem}

We begin with some setups. Let $G$ be a planar bipartite graph with property $(*)_m$. Denote by $G^*$ its \textbf{dual graph}, defined by placing a vertex $f^*$ inside each face $f\in F(G)$, and for each edge $e\in E(G)$, connecting the vertices $f^*_1,f^*_2$ inside two neighboring faces $f_1,f_2$ by an undirected path that only intersects $e$ transversely. Denote this path $e^*$, and denote by $\Vec{e^*}$ an orientation of $e^*$.

We now define sign functions on oriented edges and oriented cycles in $G^*$. Let $C$ be a simple cycle in $G^*$, define $\text{sgn}(C) = +1$ if $C$ is oriented counterclockwise, and $\text{sgn}(C) = -1$ if $C$ is oriented clockwise. Let $e\in E(G)$, define $\text{sgn}(\Vec{e^*}) = +1$ if the white endpoint of $e$ is on the left of $\Vec{e^*}$, and $\text{sgn}(\Vec{e^*}) = -1$ if the white endpoint of $e$ is on the right of $\Vec{e^*}$. 

\begin{lem}\label{lem: edge_mult_equations}
    Let $M\in \mathcal{D}_{m}(G)$, $w = \text{mult}_M$. Let $C$ be any oriented cycle in $G^*$, consisting of oriented edges $\Vec{e_1^*},\Vec{e_2^*},...,\Vec{e_L^*}$. Let $W_C, B_C$ be the number of white, black vertices of $G$ enclosed by $C$. Then,
    \begin{equation}\label{eqn: signed_count_of_edges}
        \sum^L_{i=1}\text{sgn}(\Vec{e_i^*})w(e_i) = m\cdot\text{sgn}(C)\cdot(W_C-B_C)
    \end{equation}
\end{lem}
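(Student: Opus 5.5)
The plan is to reduce the statement to the vertex-degree condition by a discrete Stokes/Green argument. First I will fix what ``enclosed by $C$'' means. Since $C$ is an oriented cycle in $G^*$, it is a closed curve in the plane that crosses each edge $e_i$ transversely once. I will first treat the case where $C$ is a simple cycle, which is the case in which $\text{sgn}(C)$ is defined. In that case $C$ bounds a region $R$ in the plane, and the vertices of $G$ enclosed by $C$ are exactly $V_R := V(G)\cap R$. The edges of $G$ fall into three classes: edges with both endpoints in $R$, edges with both endpoints outside $R$, and edges crossing $C$. The crossing edges are exactly $e_1,\dots,e_L$, each of which has one endpoint inside $R$ and one outside. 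Repeated dual edges can be handled by the same count with multiplicity.

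Next I will compute the signed vertex sum $S := \sum_{v\in V_R} \epsilon(v) \sum_{e\ni v} w(e)$, where $\epsilon(v)=+1$ for white $v$ and $\epsilon(v)=-1$ for black $v$. By the definition of an $m$-dimer cover, each inner sum equals $m$, so $S = m(W_C - B_C)$. On the other hand, I will expand $S$ edge by edge.
\begin{itemize}
\item An edge with both endpoints in $R$ has one white and one black endpoint, because $G$ is bipartite. Its two contributions are $+w(e)$ and $-w(e)$, which cancel.
\item An edge with no endpoint in $R$ does not contribute at all.
\item A crossing edge $e_i$ contributes $+w(e_i)$ if its inside endpoint is white and $-w(e_i)$ if its inside endpoint is black.
\end{itemize}
This gives $S = \sum_i \delta_i w(e_i)$, where $\delta_i = +1$ exactly when the white endpoint of $e_i$ lies inside $R$.

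The remaining step is a sign bookkeeping check, namely $\delta_i = \text{sgn}(C)\,\text{sgn}(\vec{e_i^*})$. When $C$ is counterclockwise, the interior $R$ lies to the left of each oriented dual edge $\vec{e_i^*}$. So the white endpoint is inside exactly when it is on the left, that is, when $\text{sgn}(\vec{e_i^*})=+1$. When $C$ is clockwise, the interior lies on the right, and both signs flip. Multiplying through by $\text{sgn}(C)$, using $\text{sgn}(C)^2=1$, yields Equation~\eqref{eqn: signed_count_of_edges}.

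I expect the main obstacle to be handling cycles that pass through the dual vertex $f_\infty^*$ and cycles that are not simple. For cycles through $f_\infty^*$, I will draw the dual edges to $f_\infty^*$ so that $C$ remains a Jordan curve, and then the argument above is unchanged. If the statement is meant for arbitrary closed walks, I will decompose the walk into simple cycles, interpreting the count as a winding-number-weighted count. Both sides are additive under this decomposition, since the right side becomes $m\sum_v \epsilon(v)\,\mathrm{wind}(C,v)$, so the general case follows from the simple case.
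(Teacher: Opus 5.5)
Your proof is correct and is essentially the paper's argument. The paper decomposes $C$ into small cycles around the individual enclosed vertices, each contributing $\pm m$, with the added oppositely oriented dual edges cancelling. That is the same computation as your signed sum of degree constraints over the enclosed vertices, where interior edges cancel by bipartiteness; you just make the cancellation and the sign identity $\delta_i=\text{sgn}(C)\,\text{sgn}(\vec{e_i^*})$ explicit. Your winding-number extension to non-simple closed walks is a useful addition, since path-independence of the height function implicitly relies on it.
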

\begin{proof}
    WLOG assume $C$ is counterclockwise. If $C$ only encloses a white vertex, then that white vertex is on the left of all edges of $C$, so the LHS of ~\ref{eqn: signed_count_of_edges} becomes $\sum^L_{i=1}w(e_i) = m$. If $C$ only encloses a black vertex, then that black vertex is on the right of all edges of $C$, so the LHS of ~\ref{eqn: signed_count_of_edges}  becomes $\sum^L_{i=1}-w(e_i) = -m$. If $C$ encloses more than one vertex, we can break $C$ up into cycles around only one vertex by adding pairs of opposite orientations of some edges, which does not change the LHS of ~\ref{eqn: signed_count_of_edges} . See Figure~\ref{fig:decompose_cycles}.

    \begin{figure}[h!]
    \centering
    \includestandalone[mode=tex, width=0.6\textwidth]{figures/decompose_cycles}
    \caption{An decomposition of a cycle into smaller cycles enclosing single vertices}
    \label{fig:decompose_cycles}
    \end{figure}

\end{proof}

\begin{defn}
    Fix an $m$-dimer cover $M_0\in \mathcal{D}_{m}(G)$, which we call the \textbf{ground cover}. For any $M\in \mathcal{D}_{m}(G)$, we defined the \textbf{height function} of $M$ with respect to the gound cover as $h_M: F(G)\rightarrow \mathbb{Z}$, such that:
    \begin{enumerate}
        \item[(i)] $h_M(f_\infty) = 0$
        \item[(ii)] Let $f$ be any bounded face, and $p$ be any path in $G^*$ from $f_{\infty}$ to $f$, and $\Vec{e_1^*},\Vec{e_2^*},...,\Vec{e_T^*}$ be oriented edges of $p$. Then, $h_M(f) = \sum^T_{i=1}\text{sgn}(\Vec{e_i^*})(\text{mult}
        _M(e_i)-\text{mult}
        _{M_0}(e_i))$. 
    \end{enumerate}
\end{defn}
\begin{rem}
    Height functions are well-defined. In particular, the quantity $\sum^T_{i=1}\text{sgn}(\Vec{e_i^*})(\text{mult}
        _M(e_i)-\text{mult}
        _{M_0}(e_i))$ in (ii) above does not depend on the choice of the path $p$ from $f_\infty$ to $f$. This is because, by Lemma~\ref{lem: edge_mult_equations}, we have $$\sum^L_{i=1}\text{sgn}(\Vec{e_i^*})(\text{mult}
        _M(e_i)-\text{mult}
        _{M_0}(e_i)) = 0$$ for any oriented cycle $C$ consisting of the edges $\Vec{e_1^*},\Vec{e_2^*},...,\Vec{e_L^*}$.
\end{rem}

The following lemma characterizes when a function on $F(G)$ is a height function.
\begin{lem}\label{lem: inequalities_for_heights}
    Let $M_0$ be a ground cover. Let $h: F(G)\rightarrow \mathbb{Z}$ be any function with $h(f_{\infty})=0$. Then, there is a unique $M\in \mathcal{D}_{m}(G)$ such that $h = h_M$ with respect to the ground cover $M_0$ if and only if for any $e\in E(G)$,
    \begin{equation}\label{ineq: defining_conditions_for_heights}
        0\leq h(f_{R(e)})-h(f_{L(e)}) + \text{mult}_{M_0}(e)\leq m
    \end{equation}
    where $f_{L(e)}, f_{R(e)}$ are the left and right neighboring faces of $e$ when $e$ is oriented from black to white.  
\end{lem}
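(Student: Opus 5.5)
We prove the two directions separately, using the identity that defines heights edge by edge. Fix $e\in E(G)$ oriented from black to white, with left and right faces $f_{L(e)}$ and $f_{R(e)}$. Let $\vec{e^*}$ be the dual edge crossing $e$ from $f_{L(e)}$ to $f_{R(e)}$. The white endpoint of $e$ is its head, and for this crossing direction it lies on the left of $\vec{e^*}$. With the paper's convention (white endpoint on the left gives sign $+1$; we will double-check this against the orientation picture), we get $\text{sgn}(\vec{e^*})=+1$.

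Hence for any $M\in\mathcal{D}_m(G)$ with $h=h_M$, extending a dual path from $f_\infty$ to $f_{L(e)}$ by $\vec{e^*}$ gives
\[
h_M(f_{R(e)})-h_M(f_{L(e)})=\text{mult}_M(e)-\text{mult}_{M_0}(e).
\]
If the sign comes out as $-1$, we swap the roles of $L$ and $R$ accordingly. This identity is the whole engine of the proof.

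\textbf{Necessity and uniqueness.} Suppose $h=h_M$. The identity rewrites the middle quantity in \eqref{ineq: defining_conditions_for_heights} as $\text{mult}_M(e)$. This is nonnegative, and it is at most $m$ because $e$ is incident to a vertex whose total multiplicity is $m$. Uniqueness follows from the same identity, which recovers $\text{mult}_M(e)$ from $h$ for every edge $e$. So two covers with the same height function coincide.

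\textbf{Sufficiency.} Given $h$ satisfying the inequalities, we define
\[
w(e):=h(f_{R(e)})-h(f_{L(e)})+\text{mult}_{M_0}(e).
\]
By hypothesis $w(e)\in\{0,\dots,m\}$, so $w$ is a valid multiplicity function. It remains to check the vertex condition $\sum_{e\ni v}w(e)=m$ for every $v\in V(G)$.

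Consider the small counterclockwise dual cycle $C_v$ around $v$. As $C_v$ crosses each edge $e\ni v$, it moves between consecutive faces around $v$. Summing the signed differences of $h$ along this closed loop telescopes to $0$. Each step equals $\pm\bigl(w(e)-\text{mult}_{M_0}(e)\bigr)$, and the sign is uniform around $v$: $+$ for a white $v$ and $-$ for a black $v$, exactly as in the proof of Lemma~\ref{lem: edge_mult_equations}. Therefore
\[
\sum_{e\ni v}w(e)=\sum_{e\ni v}\text{mult}_{M_0}(e)=m.
\]
So $w=\text{mult}_M$ for some $M\in\mathcal{D}_m(G)$.

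Finally, $h_M$ and $h$ agree at $f_\infty$ and have the same increments across every dual edge, by the identity. Since $G^*$ is connected, $h_M=h$.

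The main obstacle is only sign bookkeeping. We must confirm that the dual edge from $f_{L(e)}$ to $f_{R(e)}$ has the sign stated above under the paper's conventions. We must also confirm that the telescoping around a vertex produces a uniform sign. Both checks reduce to the local picture already used in Lemma~\ref{lem: edge_mult_equations}.
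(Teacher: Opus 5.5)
Your proof is correct and follows the same route as the paper: necessity from the identity $h_M(f_{R(e)})-h_M(f_{L(e)})+\text{mult}_{M_0}(e)=\text{mult}_M(e)$, and sufficiency by defining $w$ and telescoping the height differences around each vertex. The sign does come out $+1$, since crossing $e$ from $f_{L(e)}$ to $f_{R(e)}$ puts the white head on the left, so no swap is needed; your explicit uniqueness argument and your final check that $h_M=h$ via connectivity of $G^*$ are steps the paper leaves implicit, so your version is slightly more complete.
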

\begin{proof}
    To prove necessity, note that $h_M(f_{R(e)})-h_M(f_{L(e)}) + \text{mult}_{M_0}(e) = \text{mult}_M(e)\in [0,m]\cap \mathbb{Z}$.

    To prove sufficiency, define a function $w$ on edges by $w(e) := h_M(f_{R(e)})-h_M(f_{L(e)}) + \text{mult}_{M_0}(e)$. By assumption, $w(e)\in [0,m]\cap \mathbb{Z}$. We just need to show that the sum of $w(e)$ around any vertex $v\in V(G)$ is equal to $m$. We prove this for a white vertex $v$, and the proof for black vertices should be similar. Let $e_1,...,e_d$ be all edges incident to a white vertex $v\in V(G)$, oriented counterclockwise. Then we have $f_{R(e_i)} = f_{L(e_{i+1})}$ for all $i\in \mathbb{Z}_d$. Thus, 
    \begin{equation*}
        \begin{split}
            \sum^d_{i=1}w(e_i) &= \sum^d_{i=1}(h_M(f_{R(e_i)})-h_M(f_{L(e_i)}) + \text{mult}_{M_0}(e_i))\\
            &= \sum_{i\in \mathbb{Z}_d}(h_M(f_{L(e_{i+1})})-h_M(f_{L(e_i)}) + \text{mult}_{M_0}(e_i))=\sum_{i\in \mathbb{Z}_d}\text{mult}_{M_0}(e_i) = m\\
        \end{split}
    \end{equation*}
\end{proof}

\begin{lem}\label{lem:height_diff}
    Let $M\lessdot M'$ be two $m$-dimer covers related by an up-flip around face $\overline{f}$. Then \[h_{M'}(f) - h_{M}(f) = \mathbbm{1}_{\{f=\overline{f}\}}(f).\]
\end{lem}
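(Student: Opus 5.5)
We compare the two height functions directly through the path formula in the definition. Both heights use the same ground cover $M_0$, so for any face $f$ and any dual path $p$ from $f_\infty$ to $f$ with oriented edges $\Vec{e_1^*},\dots,\Vec{e_T^*}$,
\[
h_{M'}(f)-h_M(f)=\sum_{i=1}^T \text{sgn}(\Vec{e_i^*})\bigl(\text{mult}_{M'}(e_i)-\text{mult}_M(e_i)\bigr).
\]
The $M_0$ terms cancel. By the preceding remark on well-definedness, we may choose $p$ freely. The only edges with $\text{mult}_{M'}-\text{mult}_M\neq 0$ are the boundary edges $e_1,\dots,e_{2n}$ of $\overline{f}$. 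On these, the difference is $+1$ on the black-to-white edges $e_{2i-1}$ and $-1$ on the white-to-black edges $e_{2i}$, where orientations are taken clockwise around $\overline{f}$.

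For $f\neq \overline{f}$, the plan is to pick $p$ avoiding the dual vertex $\overline{f}^*$. Then $p$ can cross a boundary edge of $\overline{f}$ only when passing between two faces other than $\overline{f}$, which is impossible since each boundary edge of $\overline{f}$ has $\overline{f}$ on one side. This requires $G^*$ minus $\overline{f}^*$ to still connect $f_\infty$ to $f$; that holds when $\overline{f}$ is not a cut vertex of the dual. Rather than argue connectivity, I would use a cleaner route: take any path $p$ to $f$. Each visit of $p$ to $\overline{f}^*$ enters through some boundary edge and exits through another. I would show that the net contribution of such an in-and-out pass is $0$, by the same computation as in the single-crossing case below (entering contributes $-c$, exiting contributes $+c$). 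Summing over all visits gives $0$ for $f\neq\overline{f}$.

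For $f=\overline{f}$, take $p$ whose last edge $\Vec{e^*}$ crosses a boundary edge $e=e_j$ into $\overline{f}$, with all earlier edges avoiding $\overline{f}$ (or handled as above), so that only the last step contributes. The key check is this: when $\Vec{e^*}$ points into $\overline{f}$, the face $\overline{f}$ lies to the left or right of $e$, and we must see where the white endpoint is. Going clockwise around $\overline{f}$, an edge oriented black to white has $\overline{f}$ on its right, since clockwise traversal keeps the interior on the right. A dual arrow entering $\overline{f}$ across $e_{2i-1}$ crosses it from left to right relative to the black-to-white direction. The white endpoint then lies to the left of the dual arrow, giving sign $+1$ and difference $+1$, so the total is $+1$. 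Across $e_{2i}$ the edge is white to black, so the white endpoint lies on the right and the sign is $-1$; with difference $-1$ the product is again $+1$. Hence every entry into $\overline{f}$ contributes $+1$ and every exit contributes $-1$, which also settles the in-and-out cancellation above.

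The main obstacle is this orientation bookkeeping, namely confirming that the sign convention $\text{sgn}(\Vec{e^*})$ together with the clockwise labeling makes every entering crossing contribute the same value, $+1$ rather than $-1$. I would verify it on a single square face with explicit coordinates. If the convention instead produced $-1$ everywhere, the result would still be uniform, and the sign convention or the choice of which flip is "up" would absorb it. Uniformity across all boundary edges is the essential point, and it follows because both $\text{sgn}$ and the multiplicity change flip together between odd and even edges.
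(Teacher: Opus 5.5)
Your proof is correct, but it handles the key step differently from the paper.

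The paper picks special paths. It asserts that deleting $\overline{f}^*$ and $e_1^*,\dots,e_{2n}^*$ leaves $G^*$ connected (``deleting a face is puncturing a sphere''). So for $f\neq\overline{f}$ some path from $f_\infty$ to $f$ crosses no edge whose multiplicity changed. For $\overline{f}$ itself, it reaches the neighbour $f_1$ across $e_1$ in this way and appends $e_1^*$, tacitly using that this last crossing has sign $+1$.

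You instead allow an arbitrary path and prove the local fact that every crossing into $\overline{f}$ contributes $+1$ and every crossing out contributes $-1$. Your orientation check is right. Clockwise traversal keeps $\overline{f}$ on the right, so an entering dual arrow has the head of the oriented edge on its left. This gives $\text{sgn}=+1$ across $e_{2i-1}$ and $-1$ across $e_{2i}$, matching the multiplicity changes $+1$ and $-1$. The total is then the number of entries minus the number of exits, which is $0$ or $1$ according as the path ends outside $\overline{f}$ or at $\overline{f}$.

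Your route needs no connectivity input at all. The paper's puncturing argument really requires that $\overline{f}^*$ not be a cut vertex of $G^*$. That holds under the standing hypothesis $(*)_m$, which rules out cut vertices of $G$, but it is not automatic for an arbitrary plane bipartite graph. Your route also makes explicit the sign the paper uses silently. In exchange, the paper's route needs the sign on only a single edge.

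Finally, drop the fallback in your last paragraph. The conventions and the value $+1$ are fixed by the statement, so a uniform $-1$ would not be ``absorbed''; it would contradict the lemma. The point is moot, since your computation already gives $+1$.
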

\begin{proof}
    Let $e_1,e_2,...,e_{2n}$ be boundary edges of $f$, oriented clockwise, such that $e_{2i}$ are oriented from white to black, and $e_{2i-1}$ are oriented from black to white. Note that the induced subgraph of $G^*$ by deleting the vertex $\overline{f}^*$ and the edges $e^*_1,e^*_2,...,e^*_{2n}$ is connected, because deleting the face $\overline{f}$ is the same as puncturing a sphere, which is still path-connected, and if we take a path in the punctured sphere, from  $f_{\infty}$ to $f\neq \overline{f}$ that does not intersect vertices of $G$, then the edges this path crosses give us a walk along the edges of the induced subgraph of $G^*$.

    Since we can always pick a path from $f_{\infty}^*$ to $f^*\neq \overline{f}^*$ that does not use the edges $e^*_1,e^*_2,...,e^*_{2n}$, we get $h_{M'}(f) =h_{M}(f)$ for $f\neq \overline{f}$, because $\text{mult}_M(e) = \text{mult}_{M'}(e)$ for all $e\neq e_1,...,e_{2n}$. On the other hand, let $f_1$ be the face adjacent to $\overline{f}$ across from $e_1$. Since $\text{mult}_{M'}(e_{1}) = \text{mult}_{M}(e_{1})+1$, we have $h_{M'}(\overline{f}) -t_{M}(\overline{f}) = 1$, by computing $h_{M'}(\overline{f})$ through a path from $f_{\infty}$ to $f_1$ not using $e^*_1,e^*_2,...,e^*_{2n}$ appended with $e^*_1$.
\end{proof}

To prove that $(\mathcal{D}_m(G),\leq)$ is a lattice, we need to define meets and joins. We do this first on the level of height functions.

\begin{cor}
    Let $M_1,M_2\in \mathcal{D}_m(G)$. Then, there exists unique $m$-dimer covers, $M,M'\in \mathcal{D}_m(G)$, such that $h_M = \max\{h_{M_1}, h_{M_2}\}$ and $h_{M'} = \min\{h_{M_1}, h_{M_2}\}$. We denote $M$ by $M_1\vee M_2$, and $M'$ by $M_1\wedge M_2$. 
\end{cor}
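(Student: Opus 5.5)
The plan is to verify that $h:=\max\{h_{M_1},h_{M_2}\}$ and $h':=\min\{h_{M_1},h_{M_2}\}$ satisfy the hypotheses of Lemma~\ref{lem: inequalities_for_heights}. That lemma then produces the unique $m$-dimer covers $M$ and $M'$ with $h_M=h$ and $h_{M'}=h'$. The normalization at the unbounded face is immediate, since $h_{M_1}(f_\infty)=h_{M_2}(f_\infty)=0$ forces $h(f_\infty)=h'(f_\infty)=0$. The whole content is therefore the edge inequalities \eqref{ineq: defining_conditions_for_heights}.

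Fix an edge $e$ and write $R=f_{R(e)}$, $L=f_{L(e)}$ and $c=\text{mult}_{M_0}(e)$. By the necessity direction of Lemma~\ref{lem: inequalities_for_heights}, for $i=1,2$ we have
\[
-c\le h_{M_i}(R)-h_{M_i}(L)\le m-c .
\]
So we must show the same two bounds for $h(R)-h(L)$ and for $h'(R)-h'(L)$. This reduces to the elementary fact that if $x_i-y_i\in[\alpha,\beta]$ for $i=1,2$, then both $\max(x_1,x_2)-\max(y_1,y_2)$ and $\min(x_1,x_2)-\min(y_1,y_2)$ lie in $[\alpha,\beta]$.

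For the max, choose $j$ with $\max(x_1,x_2)=x_j$. Then $x_j-\max(y_1,y_2)\le x_j-y_j\le\beta$. For the other side, choose $k$ with $\max(y_1,y_2)=y_k$; then $\max(x_1,x_2)-y_k\ge x_k-y_k\ge\alpha$. The min case is symmetric, or follows by applying the max case to $-x_i$ and $-y_i$ with the interval $[-\beta,-\alpha]$.

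Applying this with $x_i=h_{M_i}(R)$, $y_i=h_{M_i}(L)$, $\alpha=-c$ and $\beta=m-c$ gives \eqref{ineq: defining_conditions_for_heights} for both $h$ and $h'$, on every edge.

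Uniqueness of $M$ and $M'$ is part of the conclusion of Lemma~\ref{lem: inequalities_for_heights}. It also follows directly, because $\text{mult}_M(e)=h_M(R)-h_M(L)+c$ recovers the multiplicities from the height function.

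I do not expect a genuine obstacle here. The only point needing care is keeping the orientation convention (black to white, right face minus left face) consistent, so that the interval $[-c,m-c]$ is the same for $M_1$, $M_2$ and the new function. Property $(*)_m$ is not needed for this corollary.
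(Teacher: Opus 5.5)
Your proof is correct and is essentially the paper's argument: show that the pointwise $\max$ and $\min$ of $h_{M_1},h_{M_2}$ satisfy the edge inequalities of Lemma~\ref{lem: inequalities_for_heights}, then invoke that lemma for existence and uniqueness. The paper phrases the elementary step as the sandwich $\min_i(x_i-y_i)\le \max(x_1,x_2)-\max(y_1,y_2)\le \max_i(x_i-y_i)$ (and likewise for $\min$), which is exactly your interval argument.
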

\begin{proof}
    Note that 
    \begin{equation}
        \begin{split}
            &\min\{h_{M_1}(f_{R(e)}) - h_{M_1}(f_{L(e)}), h_{M_2}(f_{R(e)}) - h_{M_2}(f_{L(e)})\}\\
            &\leq (\max\{h_{M_1}, h_{M_2}\})(f_{R(e)}) - (\max\{h_{M_1}, h_{M_2}\})(f_{L(e)})\\
            &\leq \max\{h_{M_1}(f_{R(e)}) - h_{M_1}(f_{L(e)}), h_{M_2}(f_{R(e)}) - h_{M_2}(f_{L(e)})\}\\
        \end{split}
    \end{equation}
    and 
    \begin{equation}
        \begin{split}
            &\min\{h_{M_1}(f_{R(e)}) - h_{M_1}(f_{L(e)}), h_{M_2}(f_{R(e)}) - h_{M_2}(f_{L(e)})\}\\
            &\leq (\min\{h_{M_1}, h_{M_2}\})(f_{R(e)}) - (\min\{h_{M_1}, h_{M_2}\})(f_{L(e)})\\
            &\leq \max\{h_{M_1}(f_{R(e)}) - h_{M_1}(f_{L(e)}), h_{M_2}(f_{R(e)}) - h_{M_2}(f_{L(e)})\}\\
        \end{split}
    \end{equation}
    Thus, both $\min\{h_{M_1}, h_{M_2}\}$ and $\max\{h_{M_1}, h_{M_2}\}$ satisfy ~\ref{ineq: defining_conditions_for_heights}. By Lemma~\ref{lem: inequalities_for_heights}, we can find unique such $M,M'\in \mathcal{D}_m(G)$ as desired.
\end{proof}

So far, we proved that the potential candidates of join and meet of $M_1, M_2$ exist. We must also show they are indeed the unique smallest (or unique largest, rsp.) $m$-dimer covers larger than (or smaller than) $M_1,M_2$ with respect to the up-flip poset. To achieve this goal, we use the following lemma.
\begin{lem}\label{lem: compatibility_with_height_posets}
    Let $M_1, M_2\in \mathcal{D}_m(G)$. Suppose that for all faces $f\in F(G)$, we have $h_{M_1}(f)\leq h_{M_2}(f)$, and $\max_{f\in F(G)}\{h_{M_2}(f)-h_{M_1}(f)\}\geq 1$. Then, there exists a $\overline{f}\in F(G)$, such that one can perform an up-flip on $M_1$ around $f_0$ to obtain $M'_1$, and for all $f\in F(G)$, we have $h_{M'_1}(f)\leq h_{M_2}(f)$. In particular, if for all $f\in F(G), h_{M_1}(f)\leq h_{M_2}(f)$, then $M_1\leq M_2$.
\end{lem}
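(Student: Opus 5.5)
Consider the difference $d = h_{M_2} - h_{M_1} \geq 0$, which is not identically zero. Since $h(f_\infty)=0$ for every height function, we have $d(f_\infty)=0$. Let $k=\max_f d(f)\geq 1$ and $S=\{f\in F(G): d(f)=k\}$; then $S$ consists of bounded faces. The plan is to find a face $\overline{f}\in S$ around which $M_1$ admits an up-flip. By Lemma~\ref{lem:height_diff}, the resulting $M_1'$ satisfies $h_{M_1'} = h_{M_1} + \mathbbm{1}_{\overline{f}}$. Because $d(\overline{f})=k\geq 1$, this gives $h_{M_1'}\leq h_{M_2}$ everywhere, and the total $\sum_f d$ drops by one. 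Iterating, with induction on $\sum_f (h_{M_2}-h_{M_1})(f)$, yields a chain of up-flips from $M_1$ to a cover $M$ with $h_M = h_{M_2}$. Uniqueness in Lemma~\ref{lem: inequalities_for_heights} then gives $M=M_2$, so $M_1\leq M_2$.

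The key local computation is this. For an edge $e$ with neighboring faces $f_{L(e)}, f_{R(e)}$ (oriented black to white),
\[
\text{mult}_{M_2}(e)-\text{mult}_{M_1}(e) = d(f_{R(e)})-d(f_{L(e)}).
\]
Take $f\in S$ and an edge $e$ on its boundary. If $f=f_{R(e)}$, then $\text{mult}_{M_2}(e)\geq \text{mult}_{M_1}(e)$. If $f=f_{L(e)}$, then $\text{mult}_{M_1}(e)\geq\text{mult}_{M_2}(e)$, with strict inequality when the neighbor across $e$ lies outside $S$. An up-flip around $f$ lowers the multiplicities on one alternating class of boundary edges and raises them on the other. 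Under the paper's conventions (clockwise orientation, with $e_{2i}$ oriented white to black, so that $f$ lies on the left of $e_{2i}$ when $e_{2i}$ is oriented black to white), the edges being decreased are exactly those with $f=f_{L(e)}$. Hence on each such edge $\text{mult}_{M_1}(e)\geq \text{mult}_{M_2}(e)\geq 0$, and we need $\text{mult}_{M_1}(e)\geq 1$. This holds automatically when the neighbor across $e$ is not in $S$, since the inequality is then strict.

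The main obstacle is boundary edges of $f$ whose other side is also in $S$: there both multiplicities can vanish, and the flip at $f$ is blocked. To handle this, I will work with a connected component $U$ of the union of faces in $S$, where connectivity is across edges, and choose the face within $U$ carefully. This is where property $(*)_m$ enters. An edge $e$ interior to $U$ satisfies $\text{mult}_{M_1}(e)=\text{mult}_{M_2}(e)$. If such an edge has multiplicity $0$ (respectively $m$) in both covers, I want to argue that property $(*)_m$ forbids this situation from blocking every face of $U$. Concretely, I will show that if every face of $U$ were blocked, then one could follow a blocking edge from a face to a neighboring face of $U$, and so on, producing either a closed cycle in $G^*$ or a path to $\partial U$. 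Applying Lemma~\ref{lem: edge_mult_equations} to the edges frozen at $0$ or $m$ along this path would then show that such edges are frozen for every cover, contradicting $(*)_m$. This is essentially the Propp-style argument that the frozen-edge-free regions have no "stuck" maximum.

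If this step proves too delicate, my fallback is a different choice of face. Among the faces of $U$, I will take one where $h_{M_1}$ is extremal in a suitable tilted sense, for instance maximizing $h_{M_1}$ plus a small potential, and verify directly through the local formula above that its decreasing edges carry positive multiplicity.
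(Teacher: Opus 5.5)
Your proposal is correct and follows essentially the paper's proof: start at a face maximizing $h_{M_2}-h_{M_1}$, repeatedly cross a blocking edge (your strictness observation shows the face on the other side again attains the maximum and the edge has multiplicity $0$ in both covers, so the walk never reaches $\partial U$ and must close up into a simple cycle $C$ in $G^*$), and apply Lemma~\ref{lem: edge_mult_equations} to $C$ to contradict $(*)_m$. What you should state explicitly is that every step crosses its edge from $f_{L(e)}$ to $f_{R(e)}$, so all dual edges of $C$ have the same sign; the identity for $M_1$ then forces $W_C=B_C$, hence for every cover $M$ the plain sum of $\text{mult}_M$ over the edges crossed by $C$ is $0$, and edges frozen at $m$ play no role.
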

\begin{proof}
    The goal is to find a face $\overline{f}$ with $h_{M_1}(\overline{f}) < h_{M_2}(\overline{f})$, such that for any boundary edge $e$ of $f_0$ oriented white to black along the clockwise direction, we have $\text{mult}_{M_1}(e)\geq 1$. 

    We prove by contradiction. Suppose for any face $f$ with $h_{M_1}(f) < h_{M_2}(f)$, there exists some clockwise white-to-black boundary edge $e$ of $f$, such that $\text{mult}_{M_1}(e) = 0$. 
    
    We start with a face $f_1$ such that $h_{M_2}(f_1)-h_{M_1}(f_1) = \max_{f\in F(G)}\{h_{M_2}(f)-h_{M_1}(f)\}$. Then, we have a clockwise white-to-black boundary edge $e_1$ of $f_1$, such that $\text{mult}_{M_1}(e_1) = 0$. By definition of the height function, we have
    \begin{equation}
        \text{mult}_{M_1}(e_1) = \text{mult}_{M_2}(e_1) + (h_{M_2}-h_{M_1})(f_{R(e_1)}) - (h_{M_2}-h_{M_1})(f_{L(e_1)})
    \end{equation}
    where $f_{R(e_1)} = f_1$. This implies that $\text{mult}_{M_2}(e_1) = 0$, and $(h_{M_2}-h_{M_1})(f_{L(e_1)}) = (h_{M_2}-h_{M_1})(f_1) = \max_{f\in F(G)}\{h_{M_2}(f)-h_{M_1}(f)\}\geq 1$. Let $f_2 = f_{L(e_1)}$, $\Vec{e^*_i}$ be pointing from $f_1$ to $f_2$, and repeat the above process to find $f_3,f_4,...$ and $e_2,e_3,...$ where $(h_{M_2}-h_{M_1})(f_{i}) = \max_{f\in F(G)}\{h_{M_2}(f)-h_{M_1}(f)\}$, and $\text{mult}_{M_2}(e_i) = \text{mult}_{M_1}(e_i) = 0$ for all $i=1,2,...$.

    Since our graph $G$ has only finitely many faces, we must have $1\leq i < j$, where $f_i = f_j$. WLOG assume the path $f_i\rightarrow f_{i+1}\rightarrow...\rightarrow f_j$ consisting of $\Vec{e^*_i}, \Vec{e^*_{i+1}},...,\Vec{e^*_{j-1}}$ is a simple cycle, by choosing $f_i$ to be the first face visited twice. Denote this cycle by $C$. Then, by Lemma~\ref{lem: edge_mult_equations}, we must have $W_C = B_C$. Moreover, by construction, either all of the white endpoints of $e_{i},e_{i+1},...,e_{j-1}$ are enclosed by $C$ or all of their black endpoints are enclosed by $C$. WLOG suppose the black vertices of $e_{i},e_{i+1},...,e_{j-1}$ are inside $C$. See Figure~\ref{fig:distributive_pf_contradiction} for an illustration. 

    \begin{figure}[h!]
    \centering
    \tikzset{every picture/.style={line width=0.75pt}} 

\begin{tikzpicture}[x=0.75pt,y=0.75pt,yscale=-1,xscale=1]

\draw    (138.22,33.67) -- (138.22,83.33) ;
\draw  [fill={rgb, 255:red, 0; green, 0; blue, 0 }  ,fill opacity=1 ][line width=1.5]  (132.3,83.33) .. controls (132.3,80.06) and (134.95,77.41) .. (138.22,77.41) .. controls (141.49,77.41) and (144.14,80.06) .. (144.14,83.33) .. controls (144.14,86.6) and (141.49,89.26) .. (138.22,89.26) .. controls (134.95,89.26) and (132.3,86.6) .. (132.3,83.33) -- cycle ;
\draw    (96.78,100.21) -- (196.16,141.37) ;
\draw    (31.43,141.37) -- (79.73,141.37) ;
\draw    (179.11,182.54) -- (79.73,141.37) ;
\draw [color={rgb, 255:red, 208; green, 2; blue, 27 }  ,draw opacity=1 ]   (212.19,224.42) -- (58.19,224.42) ;
\draw [shift={(56.19,224.42)}, rotate = 360] [color={rgb, 255:red, 208; green, 2; blue, 27 }  ,draw opacity=1 ][line width=0.75]    (10.93,-3.29) .. controls (6.95,-1.4) and (3.31,-0.3) .. (0,0) .. controls (3.31,0.3) and (6.95,1.4) .. (10.93,3.29)   ;
\draw [color={rgb, 255:red, 208; green, 2; blue, 27 }  ,draw opacity=1 ]   (55.58,212.05) -- (55.58,68.05) ;
\draw [shift={(55.58,66.05)}, rotate = 90] [color={rgb, 255:red, 208; green, 2; blue, 27 }  ,draw opacity=1 ][line width=0.75]    (10.93,-3.29) .. controls (6.95,-1.4) and (3.31,-0.3) .. (0,0) .. controls (3.31,0.3) and (6.95,1.4) .. (10.93,3.29)   ;
\draw [color={rgb, 255:red, 208; green, 2; blue, 27 }  ,draw opacity=1 ]   (213.72,58.5) -- (60.72,58.5) ;
\draw [shift={(215.72,58.5)}, rotate = 180] [color={rgb, 255:red, 208; green, 2; blue, 27 }  ,draw opacity=1 ][line width=0.75]    (10.93,-3.29) .. controls (6.95,-1.4) and (3.31,-0.3) .. (0,0) .. controls (3.31,0.3) and (6.95,1.4) .. (10.93,3.29)   ;
\draw [color={rgb, 255:red, 208; green, 2; blue, 27 }  ,draw opacity=1 ]   (220.31,72.05) -- (220.31,220.05) ;
\draw [shift={(220.31,222.05)}, rotate = 270] [color={rgb, 255:red, 208; green, 2; blue, 27 }  ,draw opacity=1 ][line width=0.75]    (10.93,-3.29) .. controls (6.95,-1.4) and (3.31,-0.3) .. (0,0) .. controls (3.31,0.3) and (6.95,1.4) .. (10.93,3.29)   ;
\draw  [fill={rgb, 255:red, 255; green, 255; blue, 255 }  ,fill opacity=1 ][line width=1.5]  (132.3,33.67) .. controls (132.3,30.4) and (134.95,27.74) .. (138.22,27.74) .. controls (141.49,27.74) and (144.14,30.4) .. (144.14,33.67) .. controls (144.14,36.94) and (141.49,39.59) .. (138.22,39.59) .. controls (134.95,39.59) and (132.3,36.94) .. (132.3,33.67) -- cycle ;
\draw  [fill={rgb, 255:red, 0; green, 0; blue, 0 }  ,fill opacity=1 ][line width=1.5]  (73.81,141.37) .. controls (73.81,138.1) and (76.46,135.45) .. (79.73,135.45) .. controls (83,135.45) and (85.65,138.1) .. (85.65,141.37) .. controls (85.65,144.64) and (83,147.3) .. (79.73,147.3) .. controls (76.46,147.3) and (73.81,144.64) .. (73.81,141.37) -- cycle ;
\draw  [fill={rgb, 255:red, 0; green, 0; blue, 0 }  ,fill opacity=1 ][line width=1.5]  (190.24,141.37) .. controls (190.24,138.1) and (192.89,135.45) .. (196.16,135.45) .. controls (199.43,135.45) and (202.08,138.1) .. (202.08,141.37) .. controls (202.08,144.64) and (199.43,147.3) .. (196.16,147.3) .. controls (192.89,147.3) and (190.24,144.64) .. (190.24,141.37) -- cycle ;
\draw   (196.16,141.37) -- (179.11,182.54) -- (137.94,199.59) -- (96.78,182.54) -- (79.73,141.37) -- (96.78,100.21) -- (137.94,83.16) -- (179.11,100.21) -- cycle ;
\draw  [fill={rgb, 255:red, 255; green, 255; blue, 255 }  ,fill opacity=1 ][line width=1.5]  (25.51,141.37) .. controls (25.51,138.1) and (28.16,135.45) .. (31.43,135.45) .. controls (34.7,135.45) and (37.35,138.1) .. (37.35,141.37) .. controls (37.35,144.64) and (34.7,147.3) .. (31.43,147.3) .. controls (28.16,147.3) and (25.51,144.64) .. (25.51,141.37) -- cycle ;
\draw    (196.16,141.37) -- (244.46,141.37) ;
\draw  [fill={rgb, 255:red, 255; green, 255; blue, 255 }  ,fill opacity=1 ][line width=1.5]  (238.53,141.37) .. controls (238.53,138.1) and (241.19,135.45) .. (244.46,135.45) .. controls (247.73,135.45) and (250.38,138.1) .. (250.38,141.37) .. controls (250.38,144.64) and (247.73,147.3) .. (244.46,147.3) .. controls (241.19,147.3) and (238.53,144.64) .. (238.53,141.37) -- cycle ;
\draw    (137.94,199.59) -- (137.94,249.26) ;
\draw  [fill={rgb, 255:red, 255; green, 255; blue, 255 }  ,fill opacity=1 ][line width=1.5]  (132.02,249.26) .. controls (132.02,245.98) and (134.67,243.33) .. (137.94,243.33) .. controls (141.22,243.33) and (143.87,245.98) .. (143.87,249.26) .. controls (143.87,252.53) and (141.22,255.18) .. (137.94,255.18) .. controls (134.67,255.18) and (132.02,252.53) .. (132.02,249.26) -- cycle ;
\draw  [fill={rgb, 255:red, 0; green, 0; blue, 0 }  ,fill opacity=1 ][line width=1.5]  (132.02,199.59) .. controls (132.02,196.32) and (134.67,193.67) .. (137.94,193.67) .. controls (141.22,193.67) and (143.87,196.32) .. (143.87,199.59) .. controls (143.87,202.86) and (141.22,205.51) .. (137.94,205.51) .. controls (134.67,205.51) and (132.02,202.86) .. (132.02,199.59) -- cycle ;
\draw  [fill={rgb, 255:red, 255; green, 255; blue, 255 }  ,fill opacity=1 ][line width=1.5]  (173.19,100.21) .. controls (173.19,96.94) and (175.84,94.29) .. (179.11,94.29) .. controls (182.38,94.29) and (185.03,96.94) .. (185.03,100.21) .. controls (185.03,103.48) and (182.38,106.13) .. (179.11,106.13) .. controls (175.84,106.13) and (173.19,103.48) .. (173.19,100.21) -- cycle ;
\draw  [fill={rgb, 255:red, 255; green, 255; blue, 255 }  ,fill opacity=1 ][line width=1.5]  (173.19,182.54) .. controls (173.19,179.27) and (175.84,176.62) .. (179.11,176.62) .. controls (182.38,176.62) and (185.03,179.27) .. (185.03,182.54) .. controls (185.03,185.81) and (182.38,188.46) .. (179.11,188.46) .. controls (175.84,188.46) and (173.19,185.81) .. (173.19,182.54) -- cycle ;
\draw  [fill={rgb, 255:red, 255; green, 255; blue, 255 }  ,fill opacity=1 ][line width=1.5]  (90.86,100.21) .. controls (90.86,96.94) and (93.51,94.29) .. (96.78,94.29) .. controls (100.05,94.29) and (102.7,96.94) .. (102.7,100.21) .. controls (102.7,103.48) and (100.05,106.13) .. (96.78,106.13) .. controls (93.51,106.13) and (90.86,103.48) .. (90.86,100.21) -- cycle ;
\draw  [fill={rgb, 255:red, 255; green, 255; blue, 255 }  ,fill opacity=1 ][line width=1.5]  (90.86,182.54) .. controls (90.86,179.27) and (93.51,176.62) .. (96.78,176.62) .. controls (100.05,176.62) and (102.7,179.27) .. (102.7,182.54) .. controls (102.7,185.81) and (100.05,188.46) .. (96.78,188.46) .. controls (93.51,188.46) and (90.86,185.81) .. (90.86,182.54) -- cycle ;

\draw (39,44.4) node [anchor=north west][inner sep=0.75pt]    {$\textcolor[rgb]{0.82,0.01,0.11}{f_{i}}$};
\draw (227,45.4) node [anchor=north west][inner sep=0.75pt]    {$\textcolor[rgb]{0.82,0.01,0.11}{f}\textcolor[rgb]{0.82,0.01,0.11}{_{i+1}}$};
\draw (222.31,225.45) node [anchor=north west][inner sep=0.75pt]    {$\textcolor[rgb]{0.82,0.01,0.11}{f}\textcolor[rgb]{0.82,0.01,0.11}{_{i+2}}$};
\draw (34,219.4) node [anchor=north west][inner sep=0.75pt]    {$\textcolor[rgb]{0.82,0.01,0.11}{f}\textcolor[rgb]{0.82,0.01,0.11}{_{j-1}}$};
\draw (120,60.4) node [anchor=north west][inner sep=0.75pt]    {$e_{i}$};
\draw (223,120.4) node [anchor=north west][inner sep=0.75pt]    {$e_{i+1}$};
\draw (141.87,202.99) node [anchor=north west][inner sep=0.75pt]    {$e_{i+2}$};
\draw (57.58,148.45) node [anchor=north west][inner sep=0.75pt]    {$e_{j-1}$};

\end{tikzpicture}
    \caption{Subgraph with balanced white and black vertices enclosed by a cycle}
    \label{fig:distributive_pf_contradiction}
    \end{figure}

    Claim that for any $m$-dimer cover $M$, we must have $\text{mult}_M(e_i) = ... = \text{mult}_M(e_{j-1}) = 0$. To see this, note that for each white vertex $v$, its neighboring edges must be entirely enclosed by $C$. Thus, for any $m$-dimer cover $M$, if we sum up the multiplicities on all edges incident to all white vertices inside $C$, we would get $m\cdot W_C$. However, this is also equal to $m\cdot B_C$, which means that the multiplicities on edges incident to black vertices enclosed by $C$ are fully concentrated on edges enclosed by $C$. Therefore, $\text{mult}_M(e_i) = ... = \text{mult}_M(e_{j-1}) = 0$. However, this contradicts the assumption of $(*)_m$. 
\end{proof}

\begin{cor}
    $M_1\wedge M_2$ (or $M_1\vee M_2$) is the largest (respectively, smallest) $m$-dimer cover that is smaller than (respectively, larger than) both of $M_1, M_2$ with respect to the up-flip poset.
\end{cor}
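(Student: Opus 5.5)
The plan is to reduce the statement to a comparison of height functions, using Lemma~\ref{lem:height_diff} and Lemma~\ref{lem: compatibility_with_height_posets} to identify the up-flip order with the pointwise order on heights. Throughout, fix the ground cover $M_0$. Then each $M\in\mathcal{D}_m(G)$ is uniquely determined by $h_M$ (Lemma~\ref{lem: inequalities_for_heights}), and the cover $M_1\vee M_2$ (respectively $M_1\wedge M_2$) is the one with height function $\max\{h_{M_1},h_{M_2}\}$ (respectively $\min\{h_{M_1},h_{M_2}\}$).

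The first step is to show that $M\leq M'$ in the up-flip poset if and only if $h_M\leq h_{M'}$ pointwise on $F(G)$.
\begin{itemize}
\item For the forward direction, $M\leq M'$ means there is a chain of up-flips from $M$ to $M'$. By Lemma~\ref{lem:height_diff}, each up-flip raises the height at exactly one face by $1$ and leaves all other faces unchanged. Hence heights only increase along the chain, and $h_M\leq h_{M'}$.
\item The reverse direction is the final assertion of Lemma~\ref{lem: compatibility_with_height_posets}. One iterates that lemma: each iteration performs an up-flip on the smaller cover while staying below $h_{M'}$. This reduces the nonnegative integer $\sum_f (h_{M'}(f)-h_M(f))$ by exactly $1$. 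So after finitely many steps the heights agree, and by uniqueness in Lemma~\ref{lem: inequalities_for_heights} the covers agree.
\end{itemize}
The face $f_\infty$ has fixed height $0$, so it never needs flipping. The sum therefore runs over bounded faces and is finite.

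With this order-isomorphism in hand, the corollary is immediate.
\begin{itemize}
\item Since $\max\{h_{M_1},h_{M_2}\}\geq h_{M_i}$, we get $M_1\vee M_2\geq M_i$ for $i=1,2$.
\item If $N\geq M_1$ and $N\geq M_2$, then $h_N\geq h_{M_1}$ and $h_N\geq h_{M_2}$, so $h_N\geq \max\{h_{M_1},h_{M_2}\}=h_{M_1\vee M_2}$. Hence $N\geq M_1\vee M_2$.
\end{itemize}
So $M_1\vee M_2$ is the least upper bound. The argument for $M_1\wedge M_2$ is symmetric, using $\min$. Uniqueness of the least upper bound and greatest lower bound follows from antisymmetry, which itself follows from the injectivity of $M\mapsto h_M$.

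The only delicate point is the iteration in the reverse direction of the first step. Lemma~\ref{lem: compatibility_with_height_posets} is stated with a typo ($f_0$ for $\overline{f}$), and one must check that the face it produces satisfies $h_{M_1}(\overline{f})<h_{M_2}(\overline{f})$; this is what its proof constructs. This strictness is what keeps $h_{M_1'}\leq h_{M_2}$ after the flip. Everything else is formal.
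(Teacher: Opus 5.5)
Your proof is correct and follows the same route as the paper: Lemma~\ref{lem:height_diff} shows that up-flip comparability implies pointwise comparability of height functions, Lemma~\ref{lem: compatibility_with_height_posets} gives the converse, and the extremal properties of $\max$ and $\min$ then finish the argument. Your extra care in iterating that lemma is an explicit version of what the paper leaves implicit in the lemma's final sentence. That care consists of using the strict inequality $h_{M_1}(\overline{f})<h_{M_2}(\overline{f})$ and the decreasing quantity $\sum_f (h_{M_2}(f)-h_{M_1}(f))$.
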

\begin{proof}
    By Lemma~\ref{lem: compatibility_with_height_posets}, we have $M_1\wedge M_2\leq M_1, M_2$ and $M_1\vee M_2\geq M_1, M_2$. 

    On the other hand, let $M$ be any $m$-dimer cover such that $M\leq M_1,M_2$ (or $M\geq M_1,M_2$, respectively). Then, by Lemma~\ref{lem:height_diff}, $h_M\leq h_{M_1}, h_{M_2}$ (or $h_M\geq h_{M_1}, h_{M_2}$, respectively). Thus, $h_M\leq h_{M_1\wedge M_2}$ (or $h_M\geq h_{M_1\vee M_2}$, respectively). By Lemma~\ref{lem: compatibility_with_height_posets}, we have $M_1\wedge M_2\geq M$ (or $M_1\vee M_2\leq M$, respectively).
\end{proof}

Since taking minimum and maximum of height functions are distributive, we have proved Theorem~\ref{thm: dist_lattice_of_m}. 

Let $\hat{0}_{\mathcal{D}_{m}(G)}$ denote the minimal element in the $m$-dimer lattice of $G$. From now on, we fix our ground cover to be $M_0 := \hat{0}_{\mathcal{D}_{m}(G)}$ and define our height functions with respect to such an $M_0$. 

\begin{defn}
    Let $G$ be a graph with property $(*)_m$. Associate a variable $y_f$ to each non-infinite face $f$ of $G$. Let $M$ be an $m$-dimer cover of $G$. Define the \textbf{height monomial} of $M$ to be $\displaystyle\prod_{f\in F}y^{h_M(f)}_f$. Define the \textbf{$m$-dimer face polynomial} of $G$ to be $$D_{G,m}(\mathbf{y}):= \sum_{M\in \mathcal{D}_m(G)}\displaystyle\prod_{f\in F}y^{h_M(f)}_f$$
\end{defn}

\begin{rem}
    Because any $m$-dimer cover $M$ satisfy that $M\geq M_0$, we have, by Lem~\ref{lem:height_diff}, $h_M(f)\geq h_{M_0}(f) = 0$ for all $f\in F(G)$. Further, by Lem~\ref{lem:height_diff}, the height monomial for $M$ encodes exactly the faces that we need to perform up-flips on $M_0$ and the number of times each face is up-flipped to get to $M$. 
\end{rem}

\begin{defn}
    Let $q$ be a variable, $G$ be any planar bipartite graph with property $(*)_m$. Let $S\subset \mathcal{D}_{m}(G)$. We define the \textbf{rank generating function of $m$-dimer covers in $S$} to be
    \begin{equation}
        \Omega^{S}_m(G,q):= \sum_{M\in S}q^{\sum_{f\in F(G)}h_M(f)}\\
    \end{equation}
\end{defn}

\begin{rem}\label{rem: rank_gen_equals_face_poly}
    Note that $$D_{G,m}(q,q,...,q) = \Omega^{\mathcal{D}_{m}(G)}_{m}(G,q)$$
\end{rem}
\begin{defn}
    Given a planar bipartite graphs $G$ with edge weights, That is, a function $\omega: E(G)\rightarrow \mathbb{R}^+$, define the \textbf{$m$-dimer partition function} to be $$\mathcal{Z}_{G,m}(\omega) := \sum_{M\in \mathcal{D}_m(G)}\prod_{e\in E}\omega(e)^{\text{mult}_M(e)}$$

    We also define the weight of an $m$-dimer cover $M$ to be $\omega(M):= \displaystyle\prod_{e\in E}\omega(e)^{\text{mult}_M(e)}$.
\end{defn}

\begin{defn}\label{def: face_alt_prod}
    Given a planar bipartite graphs $G$ with edge weights $\omega$. Let $f$ be a non-infinite face of $G$, with boundary edges $e_1,e_2,...,e_{2n}$ oriented clockwise, such that $e_{2i}$ are oriented from white to black, and $e_{2i-1}$ are oriented from black to white. Define the \textbf{face alternating product around $f$} to be $$\hat{y}_f := \frac{\prod^{n}_{i=1}\omega(e_{2i-1})}{\prod^{n}_{i=1}\omega(e_{2i})}$$
\end{defn}

\begin{lem}\label{lem: dimer_face_to_partition}
    Given a planar bipartite graphs $G$ with edge weights $\omega$, we have 
    \begin{equation*}
        \mathcal{Z}_{G,m} = \omega(\hat{0}_{\mathcal{D}_m(G)})\cdot D_{G,m}(\hat{\mathbf{y}}) 
    \end{equation*}
\end{lem}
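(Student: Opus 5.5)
The plan is to show the identity term by term: for every $M\in\mathcal{D}_m(G)$,
\[
\omega(M)=\omega(\hat{0}_{\mathcal{D}_m(G)})\prod_{f}\hat{y}_f^{\,h_M(f)}.
\]
Summing over $M\in\mathcal{D}_m(G)$ then gives $\mathcal{Z}_{G,m}=\omega(\hat 0)\,D_{G,m}(\hat{\mathbf{y}})$. Here the heights are taken with respect to the ground cover $M_0=\hat 0_{\mathcal{D}_m(G)}$, as fixed above. This needs $G$ to have property $(*)_m$ so that the lattice and height functions are available, and I will assume this, as in the standing hypotheses.

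The first step is a single up-flip. Suppose $M\lessdot M'$ by an up-flip around a face $\overline f$ with boundary edges $e_1,\dots,e_{2n}$. By definition, $\text{mult}$ increases by one on each $e_{2i-1}$ and decreases by one on each $e_{2i}$, and is unchanged elsewhere. Hence
\[
\omega(M')=\omega(M)\cdot\frac{\prod_i\omega(e_{2i-1})}{\prod_i\omega(e_{2i})}=\omega(M)\,\hat y_{\overline f},
\]
directly from Definition~\ref{def: face_alt_prod}. At the same time, Lemma~\ref{lem:height_diff} says that $h_{M'}=h_M+\mathbbm{1}_{\overline f}$. So the height monomial evaluated at $\hat{\mathbf y}$ is multiplied by exactly the same factor $\hat y_{\overline f}$.

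The second step is to propagate this along a chain. Since $\hat 0\le M$ for every $M$, there is a chain of up-flips
\[
\hat 0=M^{(0)}\lessdot M^{(1)}\lessdot\cdots\lessdot M^{(k)}=M .
\]
Induct along this chain. The base case holds because $h_{\hat 0}\equiv 0$ when $\hat 0$ is the ground cover. Each step multiplies both sides of the claimed identity by the same $\hat y_{\overline f}$. Note that the number of times face $f$ appears in the chain is exactly $h_M(f)$, so the result does not depend on which chain is chosen.

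The main point needing care is conventions. The orientation conventions in Definition~\ref{def: face_alt_prod} must match those of the up-flip definition; they do, since both use clockwise order with odd edges black-to-white. One should also check that $f_\infty$ contributes trivially, which holds because $h_M(f_\infty)=0$ and up-flips are only defined on non-infinite faces. Beyond these checks the argument is a direct telescoping.
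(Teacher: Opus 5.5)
Your proposal is correct and is essentially the paper's proof: a single up-flip at $f$ multiplies the weight by $\hat y_f$ while, by Lemma~\ref{lem:height_diff}, raising $h(f)$ by one, and telescoping along a chain of up-flips from $\hat 0_{\mathcal{D}_m(G)}$ (with heights taken relative to it) gives $\omega(M)=\omega(\hat 0_{\mathcal{D}_m(G)})\prod_f \hat y_f^{h_M(f)}$ for every $M$. Your remarks that property $(*)_m$ is needed for $D_{G,m}$ to be defined, and that the number of flips at each face does not depend on the chain, make explicit what the paper leaves unstated.
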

\begin{proof}
    Consider $M\lessdot M'$, related by an up-flip on the face $f$. By definition of an up-flip, we get $\frac{\omega(M')}{\omega(M)} = \frac{\prod^{n}_{i=1}\omega(e_{2i-1})}{\prod^{n}_{i=1}\omega(e_{2i})} = \hat{y}_f$. Iteratively applying Lemma~\ref{lem:height_diff} gives us $\frac{\omega(M)}{\omega(\hat{0}_{\mathcal{D}_{m}(G)})} = \prod_f \hat{y}_f^{h_M(f)}$.
\end{proof}

\subsection{Osvienko's Weights on Snake Graphs}\label{subsec: SW-weights}

\begin{defn}\label{defn: Ovsienko_SW_weighting}   
    Let $G$ be the snake graph $\mathcal{G}[a_1,a_2,...,a_n+1]$ (or the band graph $\overline{\mathcal{G}}[a_1,a_2,...,a_{2n-1},a_{2n}]$). For a block corresponding to $a_{2i-1}$, we assign a weight of $q$ to any west or south edge that lies on the boundary, except for the south edge of the first tile of $a_1$. For a block corresponding to $a_{2i}$, we assign a weight of $q^{-1}$ to any west or south edge that lies on the boundary. We call this assignment the \textbf{South-West weighting} on the graph $G$.
\end{defn}

The weighting in a snake graph is first introduced by Ovsienko~\cite{ovsienko2025qrationalsdimers} as a embedding of a snake graph onto a $\mathbb{Z}^2$ grid. While Ovsienko's original application was restricted to snake graphs, we extend this construction to accommodate the periodic boundary conditions of band graphs.

\begin{rem}\label{rem: face_prod_of_SW_weights}
    With respect to the SW-weighting, the face alternating product of any sqaure face of the snake graph $\mathcal{G}[a_1,...,a_{2n}+1]$ is $q$.
\end{rem}
Osvienko~\cite{ovsienko2025qrationalsdimers} proved the following:

\begin{thm}
[\cite{ovsienko2025qrationalsdimers} Theorem 1.3]\label{thm:weighted_snake_graph_counting}
    Let $G=\mathcal{G}[a_1,a_2,...,a_{2n-1},a_{2n}]$ be a snake graph. Assign weights to its south facing and west facing edges in the above way. Then $\mathcal{Z}_{G,1}(\mathcal{G}[a_1,a_2,...,a_{2n-1},a_{2n}])$ is $\mathcal{R}(q)$ times a Laurent monomial in $q$, where $\mathcal{R}(q)\in \mathbb{Z}_{\geq 0}[q]$ is a polynomial computed by the matrix product:
    \begin{equation}
        \begin{split}\label{eqn:simple_mtx_prod}
            \m{q\mathcal{R}(q)& \Tilde{\mathcal{R}}(q)\\q\mathcal{S}(q)&\Tilde{\mathcal{S}}(q)}&= \m{q& 1\\0&1}^{a_1}\m{q& 0\\q&1}^{a_2}...\m{q& 1\\0&1}^{a_{2n-1}}\m{q& 0\\q&1}^{a_{2n}}\\
        \end{split}
    \end{equation}    
\end{thm}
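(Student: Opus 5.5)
Since this theorem is quoted from Ovsienko, I will reprove it by a transfer-matrix induction on the number of tiles, reading the snake graph tile by tile and tracking how perfect matchings look along the current ``last edge''. For a snake graph $G$ built from the first $k$ tiles, let $e_k$ be the edge of the last tile that the arrow points to. Split the weighted matchings of the graph consisting of the first $k$ tiles plus the edge $e_k$ into two sums. The first, $\mathcal{A}_k$, collects matchings containing $e_k$. The second, $\mathcal{B}_k$, collects those not containing $e_k$; these are exactly the matchings of the graph with the two endpoints of $e_k$ deleted, extended by the two other edges at $e_k$. As in the proof of Lemma~\ref{lem: property_star_for_snakes}, adding one tile along the arrow direction gives a linear recursion $(\mathcal{A}_{k+1},\mathcal{B}_{k+1}) = (\mathcal{A}_k,\mathcal{B}_k)\,T$. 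Here $T$ is a $2\times 2$ matrix whose entries are products of the weights of the two new boundary edges of the added tile.

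The key step is to check, using the South-West weighting of Definition~\ref{defn: Ovsienko_SW_weighting}, that the transfer matrix equals $\m{q&1\\0&1}$ for a tile inside a block $a_{2i-1}$ and $\m{q&0\\q&1}$ for a tile inside a block $a_{2i}$. This holds only after a diagonal rescaling, that is, up to a Laurent monomial factor.

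The case analysis runs as follows. Inside a zigzag block the arrow alternates between straight and bent. In odd blocks the new boundary edges carrying weight $q$ are west or south edges; in even blocks they carry $q^{-1}$. The weights depend on whether the new edge is on the S/W or the N/E side, and this is determined by the gluing direction. The reflection $\hat{\mathcal{B}}$ exchanges the two sides. Conjugating by $\mathrm{diag}(1,q^{\pm1})$ and pulling out a scalar $q^{\pm1}$ per tile, the two types should normalize to the two stated matrices. As a consistency check, Remark~\ref{rem: face_prod_of_SW_weights} says every face has alternating product $q$, so each transfer matrix must have determinant $q$ up to the extracted monomial, and both target matrices have determinant $q$.

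Multiplying the transfer matrices over all tiles yields the product in \eqref{eqn:simple_mtx_prod}. The initial vector comes from the first tile's south edge $(ab)$, which has weight $1$ by the exception in the definition; this makes it a standard basis vector. The final readout selects an entry of the matrix product and matches it to $q\mathcal{R}(q)$. The extra factor of $q$ comes from the final $+1$ tile in $\mathcal{G}[\dots,a_{2n}+1]$, consistent with the notation.

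The main obstacle is the bookkeeping in the key step. I need the per-tile transfer matrices to conjugate to a fixed pair uniformly, independent of whether a tile is the first or last of its block and of the reflection. The monomial prefactors must then telescope into a single global Laurent monomial. I would first verify this on the small blocks $\mathcal{B}_1,\mathcal{B}_2,\mathcal{B}_3$ of Figure~\ref{fig:small_examples_of_zigzag_blocks}, and then induct within a block.
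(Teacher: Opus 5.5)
The paper gives no proof of its own here (it quotes Ovsienko), so your argument has to stand alone. As formulated, its key step fails.

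Let the new tile be glued along $e_k=uv$, with new vertices $x,y$ and new edges $ux,xy,yv$, and set $\rho=w(ux)w(yv)/(w(uv)w(xy))$. Since $\mathcal{A}_k$ already carries the factor $w(e_k)$, your recursion in the fixed ordering $(\mathcal{A}_k,\mathcal{B}_k)$ is
\[
T=w(xy)\begin{pmatrix}\rho&1\\0&1\end{pmatrix}\ \text{ if the arrow is bent},\qquad T=w(xy)\begin{pmatrix}1&\rho\\1&0\end{pmatrix}\ \text{ if it is straight}.
\]
So the shape of $T$ is decided by bent versus straight, not by the parity of the block; the straight tiles are exactly the last tiles of the blocks. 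Diagonal rescalings on either side and scalar factors preserve zero patterns and the sign of the determinant. Hence no such normalization turns an even-block bent tile (upper triangular) into $\begin{pmatrix}q&0\\q&1\end{pmatrix}$. Nor can it turn a straight tile, which has a zero $(2,2)$ entry and determinant $-w(xy)^2\rho<0$, into either target. Your own determinant check would have exposed this.

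The missing idea is a basis swap by $W_1=\begin{pmatrix}0&1\\1&0\end{pmatrix}$ at each block boundary, which is exactly the role of $W_m$ in Definition~\ref{def:graph_matrix}. Order the state as $(\mathcal{A}_k,\mathcal{B}_k)$ while the next tile lies in an odd block, and as $(\mathcal{B}_k,\mathcal{A}_k)$ while it lies in an even block. Remark~\ref{rem: face_prod_of_SW_weights} together with the checkerboard colouring gives $\rho=q$ on every tile of an odd block and $\rho=q^{-1}$ on every tile of an even block. The matrices that actually occur are then:
\begin{itemize}
\item in an odd block, $T$ for a bent tile and $TW_1$ for a straight tile, both equal to $w(xy)R_1(q)$;
\item in an even block, $W_1TW_1$ for a bent tile and $W_1T$ for a straight tile, both equal to $w(xy)q^{-1}L_1(q)$.
\end{itemize}
So no diagonal rescaling or telescoping is needed at all.

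The readout also needs fixing, because $\mathcal{G}[a_1,\dots,a_{2n}]$ has only $a_1+\dots+a_{2n}-1$ tiles. Choose $e_N$ as the edge on which the next tile of block $2n$ would be glued. Then $\mathcal{A}_N+\mathcal{B}_N=q^{-1}\bigl[(\mathcal{B}_N,\mathcal{A}_N)L_1(q)\bigr]_1$, which supplies the last factor of $L_1(q)^{a_{2n}}$. Starting from $(1,0)$ this yields $\mathcal{Z}_{G,1}=(\text{monomial})\cdot q^{-1}\cdot q\mathcal{R}(q)$.

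A smaller point: reflecting in a slope-$1$ line sends S to W and N to E, so it does not exchange the S/W and N/E sides.
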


Our main result generalizes this to the setting of band graphs as follows.

\begin{defn}
    Let a band graph $\overline{\calG}$ be obtained from a snake graph $\calG$. If a perfect matching $P$ of the snake graph $\calG$ satisfy that at least one of the two edges: $(ab)$ from the first tile and $(a'b')$ from the last tile, is contained in $P$, we induce a \textbf{good matching} (or \textbf{good dimer cover}) on the band graph $\overline{\mathcal{G}}$ by removing one of these two edges from $P$. If $P$ contains both such edges, we induce a good matching by including the glued edge $(ab)$ in addition to all the other edges in $P$. We denote the set of such good matchings as  $M(\overline{\mathcal{G}})$.
\end{defn}

\begin{rem}
    From the viewpoint of cluster algebras from surfaces, snake graphs provide a combinatorial expression for arcs (curves with endpoints), while band graphs provide a combinatorial expression for closed curves. The sum of perfect matchings or good matchings yields the elements corresponding to these arcs or closed curves in the cluster algebra~\cite{musiker2011positivity,musiker2013bases}. Specifically, each perfect matching in a snake graph corresponds to a term in the expansion of an arc, and each good matching in a band graph corresponds to a term in the expansion of a closed curve.
\end{rem}

\begin{exmp}
    See Figure~\ref{fig:matching_on_band_graph}. The middle diagram is a perfect matching of the snake graph $\mathcal{G}[1,2,2,3]$, which restricts to a collection of edges on the band graph $\overline{\mathcal{G}}[1,2,2,2]$ covering each vertex exactly once in the leftmost figure, where we used a single green dot to denote the covering of the glued counterpart of that vertex. The right diagram is a collection of edges on $\overline{\mathcal{G}}[1,2,2,2]$ covering each vertex once but it does not come from any perfect matching of $\mathcal{G}[1,2,2,3]$.

    \begin{figure}[h!]
    \centering
    \includestandalone[mode=tex, width=0.9\textwidth]{figures/matching_on_band_graph}
    \caption{An illustration of good (or not good) matchings on band graphs}
    \label{fig:matching_on_band_graph}
    \end{figure}    
\end{exmp}

We assign weights to edges of a band graph. Take $\overline{G}:=\overline{\mathcal{G}}[a_1,a_2,...,a_{2n-1},a_{2n}]$ to be a band graph, and $G:=\mathcal{G}[a_1,a_2,...,a_{2n-1},a_{2n}+1]$ to be the snake graph which glues to form $\overline{G}$. Assign weights to $G$ according to  then via gluing we get a weighting on the band graph $\overline{G}$. Let $\overline{\mathcal{Z}}_{\overline{\mathcal{G}},1}$ denote the weighted sum of good matchings on $\overline{G}$ with respect to this weighting. Then, we prove the following as a special case of our main theorem ~\ref{thm: partition_fcn_is_trace}.

\begin{thm}\label{thm:weighted_band_graph_count}
    Let $\overline{\mathcal{G}}[a_1,a_2,...,a_{2n -1},a_{2n}-1]$ be a band graph, with respect to the above weighting. Then, $\overline{\mathcal{Z}}_{\overline{\mathcal{G}},1}$ is the trace $q\mathcal{R}(q)+\Tilde{\mathcal{S}}(q)$ of the following matrix product, times a Laurent monomial in $q$:
    \begin{equation}
        \begin{split}\label{eqn:simple_mtx_prod_band}
            \m{q\mathcal{R}(q)& \Tilde{\mathcal{R}}(q)\\q\mathcal{S}(q)&\Tilde{\mathcal{S}}(q)}&= \m{q& 1\\0&1}^{a_1}\m{q& 0\\q&1}^{a_2}...\m{q& 1\\0&1}^{a_{2n-1}}\m{q& 0\\q&1}^{a_{2n}}\\
        \end{split}
    \end{equation}
\end{thm}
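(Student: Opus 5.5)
We reduce the band-graph count to Ovsienko's snake-graph result, Theorem~\ref{thm:weighted_snake_graph_counting}, by sorting good matchings according to how they use the glued edge. Let $G=\mathcal{G}[a_1,\dots,a_{2n}]$ be the snake graph underlying the band graph of the statement (one tile shorter, matching the shift $a_{2n}-1$ versus Definition~\ref{defn: even_length_band_graph}). Let $(ab)$ be its first south edge and $(a'b')$ the last edge the arrow points to. By the definition of good matchings, $M(\overline{\mathcal{G}})$ is in bijection with perfect matchings $P$ of $G$ containing $(ab)$ or $(a'b')$. Such $P$ split into two disjoint classes:
\begin{itemize}
\item[(A)] $P$ contains $(a'b')$ (with or without $(ab)$);
\item[(B)] $P$ contains $(ab)$ but not $(a'b')$.
\end{itemize}
The case where $P$ contains both edges is counted once, and it contributes the glued edge once. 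So the weight of a good matching equals the weight of $P$ divided by the weight of exactly one copy of the glued edge. Under the SW-weighting these two copies have the same weight up to a fixed power of $q$, which we absorb into the Laurent monomial.

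The key step is to read off the entries of the matrix product in \eqref{eqn:simple_mtx_prod}. Following Ovsienko's transfer-matrix proof, each factor $\m{q&1\\0&1}$ or $\m{q&0\\q&1}$ records adding one tile. The two coordinates index the state of the current "exit" edge: first coordinate $=$ exit edge used, second $=$ exit edge not used, both up to the boundary weights. Then the $(i,j)$ entry of the product is the weighted count of matchings of $G$ with prescribed start state $i$ at $(ab)$ and end state $j$ at $(a'b')$.

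I would prove this refined statement by induction on the number of tiles. The inductive step is the local check that gluing one tile east or north, with the SW weights of Definition~\ref{defn: Ovsienko_SW_weighting}, multiplies the state vector by the stated matrix. The argument is the same case split as in Lemma~\ref{lem: property_star_for_snakes}: either the new exit edge is used, or the two edges adjacent to it are.

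With this interpretation:
\begin{itemize}
\item $q\mathcal{R}(q)$ (entry $(1,1)$) counts matchings containing both $(ab)$ and $(a'b')$;
\item $\Tilde{\mathcal{S}}(q)$ (entry $(2,2)$) counts those containing neither.
\end{itemize}
This seems off by one state, so the identification must be made carefully. Presumably the state convention is "the edge is not yet covered from inside," so that the diagonal entries correspond exactly to a consistent gluing across $(ab)\sim(a'b')$, which is the defining condition of goodness. The trace then sums exactly the consistent closures, namely classes (A) and (B) combined.

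The main obstacle is this bookkeeping of boundary states and of the $q$-powers. Three checks are needed:
\begin{itemize}
\item that the state semantics make the diagonal correspond exactly to good matchings and not to the non-good matchings shown in Figure~\ref{fig:matching_on_band_graph};
\item that the exceptional weight on the south edge of the first tile of $a_1$, and the extra tile in $a_{2n}$ versus $a_{2n}-1$, contribute only a global monomial $q^a$;
\item that the two diagonal entries receive the same normalization.
\end{itemize}
I would first verify everything on $\overline{\mathcal{G}}[1,2]$ or $\overline{\mathcal{G}}[1,1]$ by hand to fix the conventions. Then I would run the induction with the state-vector formulation.
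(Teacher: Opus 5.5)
There is a genuine gap, and it sits exactly at the step you flagged as ``off by one state.''

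First, the graph is wrong. The trace of the product with exponents $a_1,\dots,a_{2n}$ belongs to the band graph whose underlying snake graph is $\mathcal{G}[a_1,\dots,a_{2n}+1]$, i.e.\ the one with $a_1+\cdots+a_{2n}$ tiles. The paper obtains this result as the case $m=1$ of Theorem~\ref{thm: partition_fcn_is_trace}, where $X^{(1)}_{\mathbf a}(q)$ equals the displayed product since $W_1^2=I$. The printed shift $a_{2n}-1$ is inconsistent with that theorem. Closing up the one-tile-shorter $\mathcal{G}[a_1,\dots,a_{2n}]$, as you do, makes the claim false. For $(a_1,a_2)=(1,2)$ your $G$ is two stacked tiles, and the resulting band graph has $3$ good matchings. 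At $q=1$ the trace is $\operatorname{tr}\m{3&1\\2&1}=4$, and no Laurent monomial can repair that.

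Second, your transfer-matrix semantics contradicts Theorem~\ref{thm:weighted_snake_graph_counting}, the very result you reduce to. The $(1,1)$ entry $q\mathcal{R}(q)$ is $q$ times the weighted count of \emph{all} perfect matchings of $\mathcal{G}[a_1,\dots,a_{2n}]$. Already for a single tile ($a_1=a_2=1$) it equals $q^2+q$, which includes the matching using neither horizontal edge. The correct reading is Theorem~\ref{thm: rank_gen_of_snakes} at $m=1$: entry $(i,j)$ is $q^{2-j}$ times the rank generating function of matchings with heights at most $2-i$ and $2-j$ on the first and last tiles. So $(2,2)$ counts the matchings containing both the first and the last edge. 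Your ``presumably'' sentence is therefore not bookkeeping but the whole content of the proof, and in your set-up it cannot be completed.

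The missing argument, on the correct graph, is short. In $\overline{\mathcal{G}}[a_1,\dots,a_{2n}]$ let $e_c=(ab)$, let $e_d$ be the opposite edge of the closing tile, and let $e_a,e_b$ be its other two edges. A perfect matching is good exactly when $\operatorname{mult}(e_a)=\operatorname{mult}(e_b)$; this is what excludes the bad matchings of Figure~\ref{fig:matching_on_band_graph}.
\begin{itemize}
\item Good matchings with $e_a,e_b$ unused are precisely the perfect matchings of the induced snake graph $\mathcal{G}[a_1,\dots,a_{2n}]$. They give $q\mathcal{R}(q)$.
\item Good matchings with $e_a,e_b$ used become, after trading $e_a,e_b$ for $e_c,e_d$, precisely the perfect matchings of $\mathcal{G}[a_1,\dots,a_{2n}]$ containing both end edges. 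They give $\Tilde{\mathcal{S}}(q)$.
\end{itemize}
Both classes pick up the same factor $q^{-(a_2+a_4+\cdots+a_{2n})}$.

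Your classes (A) and (B), applied to $\mathcal{G}[a_1,\dots,a_{2n}+1]$ instead of your shorter $G$, are exactly these two classes. This is the paper's proof of Theorem~\ref{thm: partition_fcn_is_trace}, which splits by $l=\operatorname{mult}(e_a)$ and quotes \cite{BOSZ26} for the diagonal entries.
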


\begin{rem}
    
The motivation for distinguishing good matchings arises from the geometric interpretation of band graphs as closed loops on a surface. Within the theory of cluster algebras, while arcs correspond directly to cluster variables, closed loops represent more complex algebraic elements—often referred to as ``bracelets'' or elements of the skein algebra. Good matchings is also essential for establishing expansion formulas involving the traces of matrix products~\cite{ezgieminecluster2024}.

\end{rem}

\section{Good Higher Dimer Covers on Band Graphs}\label{sec:good_higher_dimer}

In ~\cite{Musiker2023HigherDC}, the authors computed the number of higher dimer covers on a snake graph by a product of $SL_{m+1}$-matrices ($m\geq 1$) which generalized the matrices $\m{a&1\\1&0}$ coming from continued fractions. In a later work ~\cite{BOSZ26}, the authors defined a $q$-analog of these $SL_{m+1}$-matrices, which can be used to compute the rank generating function for $P$-partitions on a border strip. In this section, we provide a combinatorial interpretation for the trace of the matrix product of the $q$-deformed $SL_{m+1}$-matrices by showing that they compute the dimer partition function on the ``good" $m$-dimer covers on band graphs with respect to ~\cite{ovsienko2025qrationalsdimers}'s weighting rules.

\begin{defn}
Let \(\overline{\mathcal{G}}=\overline{\mathcal{G}}[a_1,a_2,\ldots,a_{2n-1},a_{2n}]\) be a band graph, and let $m\in\mathbb{N}_+$. An $m$-dimer cover
$M$ of $\overline{\mathcal{G}}$ is called a
\textbf{good $m$-dimer cover} if there exist $m$ good matchings $P_1,\ldots,P_m$ of $\overline{\mathcal{G}}$, not necessarily distinct, such that
\[
    \operatorname{mult}_M(e)
    =
    \sum_{i=1}^{m}\operatorname{mult}_{P_i}(e)
\]
for every $e\in E(\overline{\mathcal{G}})$. Here,
$\operatorname{mult}_{P_i}$ is the indicator multiplicity function
of $P_i$; that is,
\[
    \operatorname{mult}_{P_i}(e)
    =
    \begin{cases}
        1, & e\in P_i,\\
        0, & e\notin P_i.
    \end{cases}
\]

We denote the set of all good $m$-dimer covers of
$\overline{\mathcal{G}}$ by \(\overline{\mathcal{D}}_{\overline{\mathcal{G}},m}.\) Let $\omega_{\mathrm{SW}}$ denote the South-West edge weighting
defined in~\Cref{defn: Ovsienko_SW_weighting}. We define the
\textbf{good $m$-dimer partition function} of
$\overline{\mathcal{G}}$ by
\[
    \overline{\mathcal{Z}}_{\overline{\mathcal{G}},m}(q)
    :=
    \sum_{
        M\in
        \overline{\mathcal{D}}_{\overline{\mathcal{G}},m}
    }
    \prod_{e\in E(\overline{\mathcal{G}})}
    \omega_{\mathrm{SW}}(e)^{
        \operatorname{mult}_M(e)
    }.
\]
\end{defn}

\begin{exmp}
    See Figure~\ref{fig:double_dimer_poset_ex} for the flip-poset of good double dimer covers on the band graph $\overline{\mathcal{G}}[2,1]$. Note this is a lattice. 

    \begin{figure}[h!]
    \centering
    \includestandalone[mode=tex, width=0.8\textwidth]{figures/double_dimer_poset_ex}
    \caption{Lattice of Good Double Dimers}
    \label{fig:double_dimer_poset_ex}
    \end{figure} 
\end{exmp}

Modifying the proof of Lemma~\ref{lem: compatibility_with_height_posets}, we can actually show the set of good $m$-dimer covers form a distributive lattice with respect to up-flips around the square faces of the band graph, where we do not allow up-flips on the "interior face" (the face consisting of all the south and east boundary edges). See Figure~\ref{fig:sw_ne_weighting}.

\begin{prop}\label{prop: lattice_structure_of_good_m_covers}
    The poset generated by up-flips on the square faces of the planar band graph $G:= \overline{\mathcal{G}}[a_1,a_2,...,a_{2n-1},a_{2n}]$ (not allowing flips on the interior face) as a cover relation is a distributive lattice.   
\end{prop}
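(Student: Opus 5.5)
The plan is to rerun the height-function argument of Section~\ref{subsec: prelim_higher_dimer} inside the slice of $\mathcal{D}_m(G)$ where the height of the interior face $f_0$ is frozen. By Lemma~\ref{lem: property_star_for_planar_bands}, $G$ has property $(*)_m$. So Theorem~\ref{thm: dist_lattice_of_m} and all the lemmas before it apply to the full set $\mathcal{D}_m(G)$.

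\textbf{Step 1: ground cover and characterization.} I fix the ground cover $M_0$ to be a good $m$-dimer cover, for instance $m$ copies of one fixed good matching. I then want to show
\[
\overline{\mathcal{D}}_{\overline{\mathcal{G}},m}=\{M\in\mathcal{D}_m(G): h_M(f_0)=c\}
\]
for a suitable constant $c$, with $c=0$ for this choice of ground cover.

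For $m=1$ the argument goes through the snake graph. A perfect matching of the band graph fails to be good exactly when it uses both $e_{a'}$ and $e_{b'}$ in the configuration shown in Figure~\ref{fig:property_star_on_band}, which is not induced from $\mathcal{G}[a_1,\ldots,a_{2n}+1]$. Heights are computed along a dual path from $f_\infty$ to $f_0$ crossing the glued edge $(ab)$. A direct comparison along that path should show that every good matching has the same value of $h(f_0)$, and every bad matching has a different value. I would verify this using the case analysis from the proof of Lemma~\ref{lem: property_star_for_planar_bands}.

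For general $m$ there are two directions.
\begin{itemize}
\item If $M=\sum_i P_i$ with each $P_i$ good, then $h$ is additive in the multiplicity functions once heights are taken relative to $m$ copies of a fixed matching. This gives $h_M(f_0)=c$.
\item Conversely, given $M$ with $h_M(f_0)=c$, I decompose $M$ into $m$ perfect matchings (Remark~\ref{rem: alternative_view_of_m_covers}). Their $f_0$-heights average to $c$. If some of them are bad, I must re-decompose so that all of them are good. I would do this by swapping along alternating cycles that separate $f_0$ from $f_\infty$: exchange between a matching whose $f_0$-height is too high and one whose $f_0$-height is too low, and induct on the total deviation.
\end{itemize}

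\textbf{Step 2: lattice operations.} By Lemma~\ref{lem:height_diff}, an up-flip on a square face leaves $h(f_0)$ unchanged, so the slice is closed under the allowed flips. The pointwise $\max$ and $\min$ of two height functions that both take the value $c$ at $f_0$ again take the value $c$ there. They still satisfy \eqref{ineq: defining_conditions_for_heights}, by the same estimate used in the corollary defining $M_1\vee M_2$ and $M_1\wedge M_2$. Hence the slice is closed under $\vee$ and $\wedge$, and these operations are distributive.

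\textbf{Step 3: adapting Lemma~\ref{lem: compatibility_with_height_posets}.} The remaining point is that $h_{M_1}\le h_{M_2}$, for good covers $M_1,M_2$, implies that $M_1$ reaches $M_2$ by flips on square faces only. Let $d=h_{M_2}-h_{M_1}$ and suppose $\max d\ge 1$. Every face in the chain $f_1,f_2,\ldots$ constructed in that proof attains $\max d\ge1$, while $d(f_0)=0$, so $f_0$ never appears in the chain. Hence the face produced by the proof is a square face. The contradiction argument is otherwise unchanged: it yields a cycle of edges having multiplicity zero in every $m$-dimer cover, which contradicts $(*)_m$ from Lemma~\ref{lem: property_star_for_planar_bands}. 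The up-flipped cover keeps $h(f_0)=c$, so it stays good by Step 1, and induction on $\sum_f d(f)$ gives $M_1\le M_2$ in the restricted flip poset. The meet/join corollary then carries over verbatim.

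\textbf{Main obstacle.} I expect the hard part to be the converse direction of Step 1, namely that $h_M(f_0)=c$ forces a decomposition of $M$ into $m$ good matchings. The fallback is to define the poset on the slice $\{h(f_0)=c\}$ directly, which Steps 2 and 3 handle without Step 1. Identifying that slice with $\overline{\mathcal{D}}_{\overline{\mathcal{G}},m}$ is still needed, and for it I would first try the alternating-cycle exchange argument described above.
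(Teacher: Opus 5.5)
Your argument is essentially the paper's: fix a good ground cover, identify good $m$-dimer covers with the slice $h(f_0)=0$ of the full $m$-dimer lattice, note that pointwise $\max$ and $\min$ preserve that slice, and observe that flips between covers in the slice never occur at $f_0$.

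\textbf{A correction to Step~1.} For $m=1$ the bad perfect matchings are those using \emph{exactly one} of $e_{a'},e_{b'}$. A matching using both is good, because adding $(ab)$ lifts it to a perfect matching of $\mathcal{G}[a_1,\ldots,a_{2n}+1]$. The cleanest way to get the characterization for all $m$ at once is the paper's choice of dual path. Go from $f_\infty$ to $f_0$ through the face of the last tile, crossing only $e_{a'}$ and $e_{b'}$. This gives
\[
h_M(f_0)=\pm\Bigl[\bigl(\text{mult}_M(e_{b'})-\text{mult}_M(e_{a'})\bigr)-\bigl(\text{mult}_{M_0}(e_{b'})-\text{mult}_{M_0}(e_{a'})\bigr)\Bigr],
\]
so the slice is exactly $\{\text{mult}(e_{a'})=\text{mult}(e_{b'})\}$. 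A path through the glued edge $(ab)$ only obscures this.

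\textbf{The converse you flag.} This is the step the paper passes over. It derives $\text{mult}_M(e_{a'})=\text{mult}_M(e_{b'})$ from $h_M(f_0)=0$ and tacitly treats that as goodness, without showing that $M$ splits into $m$ good matchings. Your exchange argument does close it, with the following detail.
\begin{itemize}
\item Suppose $h_{P_i}(f_0)=1$ and $h_{P_j}(f_0)=-1$, relative to a good $P_0$. Then $P_i\triangle P_j$ is a disjoint union of alternating cycles.
\item The difference $h_{P_i}-h_{P_j}$ is a signed sum of indicator functions of the interiors of these cycles. Hence some cycle $C$ encloses $f_0$ with sign $+1$.
\item Replacing $P_i,P_j$ by $P_i\triangle C$ and $P_j\triangle C$ keeps the sum unchanged and sets both $f_0$-heights to $0$.
\end{itemize}
A shorter alternative: take ground cover $mP_0$ and consider the functions $\lfloor (h_M+k)/m\rfloor$ for $k=0,\ldots,m-1$. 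They satisfy the $m=1$ inequalities of Lemma~\ref{lem: inequalities_for_heights} and sum to $h_M$. They all vanish at $f_\infty$, and at $f_0$ whenever $h_M(f_0)=0$. So they are the heights of $m$ good matchings whose sum is $M$.
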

\begin{proof}
    By Lemma~\ref{lem: property_star_for_planar_bands}, we know that $(\mathcal{D}_{m}(G), \leq)$ is a distributive lattice, where $\leq$ is generated by upflips on any face of $G$ except for its exterior face.
    
    Now take $M_0$ to be any good $m$-dimer cover. We claim that the set of good $m$-dimer covers on $G$ is in bijection with the set of functions on faces with the following property:
    $$\mathcal{H}^{\text{res}}_m(G) = \{h: F(G)\rightarrow \mathbb{Z}| 0\leq h(f_{R(e)}) -h(f_{L(e)})+\text{mult}_{M_0}(e)\leq m, h(f_{\infty}) = h(f_0) = 0 \}$$

    By Lemma~\ref{lem: inequalities_for_heights}, the height function of any $m$-dimer cover of $G$ satisfy the inequalities in the above inequality constraints.
    
    As before, let $e_{a'}, e_{b'}$ be the edges of $G$ in the last tile of the corresponding snake graph incident to $a',b'$ and are not the edge $(a'b')$. Then, note that for any good $m$-dimer cover $M$, we always have $\text{mult}_{M}(e_{a'}) = \text{mult}_{M}(e_{b'})$. On the level of height functions, if we pick a path from $f_\infty$ to $f_0$ passing only through the face of the last tile, with oriented dual edges $\Vec{e^*_{a'}}, \Vec{e^*_{b'}}$, then, we have 
    \begin{equation}
        \begin{split}
            h_M(f_0) - h_M(f_\infty)&= \text{sgn}(\Vec{e^*_{a'}})(\text{mult}_{M}(e_{a'})-\text{mult}_{M_0}(e_{a'}))+ \text{sgn}(\Vec{e^*_{b'}})(\text{mult}_{M}(e_{b'})-\text{mult}_{M_0}(e_{b'}))\\
            &= -(\Vec{e^*_{a'}})(\text{mult}_{M}(e_{a'})-\text{mult}_{M_0}(e_{a'}))+ +(\Vec{e^*_{b'}})(\text{mult}_{M}(e_{b'})-\text{mult}_{M_0}(e_{b'}))\\
            &= (\text{mult}_{M}(e_{b'})-\text{mult}_{M}(e_{a'})) - (\text{mult}_{M_0}(e_{b'})-\text{mult}_{M_0}(e_{a'}))=0\\
        \end{split}
    \end{equation}

    Thus, every good $m$-dimer cover $M$ satisfy that $h_M(f_0) = 0$. Conversely, if a height function satisfy $h_M(f_0) = 0$, we can deduce from the above calculation that $\text{mult}_{M}(e_{a'}) = \text{mult}_{M}(e_{b'})$. 

    Thus, since taking face-wise maximum or minimum preserves the condition $h(f_0) = 0$, we have that if $M_1, M_2$ are good $m$-dimer covers, then $M_1\vee M_2, M_1\wedge M_2$ are also good $m$-dimer covers.

    Finally, we show that the cover relation of $\leq$ when restricted on the set of good $m$-dimer covers are upflips on faces other than the interior face. Let $M_1, M_2$ be two good $m$-dimer covers, with $h_{M_1}(f)\geq h_{M_2}(f)$. By Lemma~\ref{lem: compatibility_with_height_posets}, we know there is a sequence of up-flips to transform $M_1$ to be $M_2$. On the other hand, by Lemma~\ref{lem:height_diff}, we know any up-flip cannot happen around the interior face, since $h_{M_1}(f_0) = 0 = h_{M_2}(f_0)$. 
\end{proof}

We adopt the standard notation for $q$-integers, $q$-factorials, and $q$-binomial coefficients:
\[
[n]_q = 1 + q + \dots + q^{n-1}, \quad [n]_q! = [n]_q [n-1]_q \dots [1]_q, \quad \binom{n}{k}_q = \frac{[n]_q!}{[k]_q! [n-k]_q!}.
\]
The $q$-multiset coefficient is defined as $\multiset{n}{k}_q := \binom{n+k-1}{k}_q$.

Let $R_m$ and $L_m$ denote the upper and lower triangular matrices of $(m+1) \times (m+1)$, respectively, with the entries defined by $(R_m)_{ij} = 1$ for $i \leq j$ and $(L_m)_{ij} = 1$ for $i \geq j$. Furthermore, let $Q_m = \text{diag}(q^m, q^{m-1}, \dots, q, 1)$ be the diagonal matrix of $q$-powers. We define the $q$-deformations of these matrices as:

\[
R_{m}(q):=R_m Q_m= \m{q^m & q^{m-1} & \cdots & q & 1 \\
    0 & q^{m-1} & \cdots & q & 1 \\
    \vdots & & \ddots & \vdots & \vdots \\
    0 & 0 & \cdots & q & 1 \\
    0 & 0 & \cdots & 0 & 1 }, \qquad 
L_{m}(q) :=L_m Q_m= \m{q^m & 0 & \cdots & 0 & 0 \\
    q^m & q^{m-1} & \cdots & 0 & 0 \\
    \vdots & \vdots & \ddots & \vdots & \vdots \\
    q^m & q^{m-1} & \cdots & q & 0 \\
    q^m & q^{m-1} & \cdots & q & 1}.
\]

\begin{lem}[{\cite[Lemma 3.2]{BOSZ26}}]\label{lem:matrix_entries}
    For $j = i+k$ with $k \geq 0$, the $(i,j)$-entry of $R_m(q)^a$ is given by:
    \[
    (R_m(q)^a)_{ij} = q^{a(m+1-i-k)} \multiset{a}{k}_q.
    \]
    Similarly, the entries of the lower triangular powers are given by:
    \[
    (L_m(q)^a)_{ji} = q^{i-j} (R_m(q)^a)_{ij}.
    \]
\end{lem}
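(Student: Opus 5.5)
The plan is to prove both parts by direct matrix computation: the first formula by induction on $a$ using the $q$-Pascal recursion for multiset coefficients, and the second by expressing $L_m(q)$ as a diagonal conjugate of the transpose $R_m(q)^{T}$. Throughout, rows and columns are indexed by $1,\dots,m+1$ as in the displayed matrices. From $R_m(q)=R_mQ_m$ we read off $(R_m(q))_{ij}=q^{m+1-j}$ for $i\le j$ and $0$ otherwise. From $L_m(q)=L_mQ_m$ we read off $(L_m(q))_{ji}=q^{m+1-i}$ for $j\ge i$ and $0$ otherwise.

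\textbf{First formula.} I will induct on $a$. The case $a=1$ is the entry formula above, since $q^{m+1-i-k}=q^{m+1-j}$ and the multiset coefficient with $a=1$ equals $1$. For the inductive step, upper triangularity gives
\[
(R_m(q)^{a+1})_{i,i+k}=\sum_{l=0}^{k}(R_m(q)^a)_{i,i+l}\,(R_m(q))_{i+l,i+k}=q^{m+1-i-k}\sum_{l=0}^{k}q^{a(m+1-i-l)}\multiset{a}{l}_q .
\]
Factoring out $q^{(a+1)(m+1-i-k)}$ leaves $\sum_{l=0}^{k}q^{a(k-l)}\multiset{a}{l}_q$. It remains to show this sum equals $\multiset{a+1}{k}_q$. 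This is the standard $q$-Pascal identity: interpret $\multiset{a+1}{k}_q$ as the generating function, by the sum of entries, of multisets of size $k$ from $\{0,1,\dots,a\}$, and classify them by the number $k-l$ of copies of the largest element $a$. Alternatively, iterate $\binom{N}{k}_q=\binom{N-1}{k-1}_q+q^{k}\binom{N-1}{k}_q$. I will sanity-check the identity at $a=k=1$, where both sides equal $1+q$.

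\textbf{Second formula.} Comparing entries, $(L_m(q))_{ji}$ and $(R_m(q))_{ij}$ differ only by the factor $q^{(m+1-i)-(m+1-j)}$. Hence $L_m(q)=D\,R_m(q)^{T}D^{-1}$ for a diagonal matrix $D$ of consecutive $q$-powers. Since conjugation and transposition commute with taking powers, $L_m(q)^a=D\,(R_m(q)^a)^{T}D^{-1}$. Reading off the $(j,i)$ entry then gives the lower-triangular entries as a power of $q$ in $i-j$ times $(R_m(q)^a)_{ij}$.

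\textbf{Main obstacle.} The step requiring care is pinning down the sign of that exponent so that it matches the wording $q^{i-j}$. A literal check at $m=a=1$ with top-left indexing gives $(L_1(q))_{21}=q$ and $(R_1(q))_{12}=1$, so this indexing produces the factor $q^{j-i}$. The stated $q^{i-j}$ therefore corresponds to the convention inherited from \cite{BOSZ26}, namely indices listed so that $Q_m$ reads $\operatorname{diag}(1,q,\dots,q^m)$, i.e.\ after conjugating by the reversal permutation. I will fix that convention explicitly, redo the one-line conjugation computation in it, and verify the base case $m=a=1$ before stating the identity as written.
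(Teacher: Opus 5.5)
Your argument for the first formula is correct; the paper gives no proof of this lemma and only cites the source. The entry formula $(R_m(q))_{ij}=q^{m+1-j}$ for $i\le j$, the factorization of $q^{(a+1)(m+1-i-k)}$, and the identity $\multiset{a+1}{k}_q=\sum_{l=0}^{k}q^{a(k-l)}\multiset{a}{l}_q$ all check out. That identity follows by sorting multisets from $\{0,\dots,a\}$ according to the number of copies of $a$. One small slip: iterating the Pascal rule in the form you quote yields a different (also true) expansion, $\sum_{l}q^{l}\multiset{a}{l}_q$. The expansion you need comes from the companion rule $\binom{N}{k}_q=\binom{N-1}{k}_q+q^{N-k}\binom{N-1}{k-1}_q$, iterated on its second term.

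The genuine problem is the last step of your plan for the second formula. Your conjugation computation is right. Explicitly, $R_m(q)^{T}=Q_mL_m$, so $L_m(q)=Q_m^{-1}R_m(q)^{T}Q_m$, and hence $(L_m(q)^a)_{ji}=q^{j-i}(R_m(q)^a)_{ij}$. Your check at $m=a=1$ therefore shows that the printed factor $q^{i-j}$ is simply wrong for the matrices the paper defines and displays. It would even predict the non-polynomial entry $(L_1(q))_{21}=q^{-1}$. The correct factor also agrees with the $(2,1)$-entry $q[a]_q$ of $L_1(q)^a$ in Remark~\ref{rem: convert_product}.

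You cannot repair this by choosing a convention. The first formula asserts $(R_m(q))_{ij}\neq 0$ for all $i\le j$, and the only relabeling of $\{1,\dots,m+1\}$ compatible with that is the identity. Concretely, the two conventions you mention both break the first formula at $a=1$:
\begin{itemize}
\item using $\operatorname{diag}(1,q,\dots,q^m)$ in place of $Q_m$ gives $(R_m(q))_{i,i+k}=q^{i+k-1}$ instead of $q^{m+1-i-k}$;
\item conjugating by $W_m$ makes $R_m(q)$ lower triangular.
\end{itemize}
So no single convention makes both displayed identities true verbatim, and ``stating the identity as written'' cannot be the conclusion of a proof. You should flag the exponent as a sign error and prove the corrected statement
\[
(L_m(q)^a)_{i+k,i}=q^{k}(R_m(q)^a)_{i,i+k}=q^{k+a(m+1-i-k)}\multiset{a}{k}_q ,
\]
which your argument already establishes.
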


Let $W_m$ be the $(m+1) \times (m+1)$ anti-diagonal matrix defined by:
\[
W_m := \begin{pmatrix}
    0 & 0 & \cdots & 0 & 1 \\
    0 & 0 & \cdots & 1 & 0 \\
    \vdots & \vdots & \iddots & \vdots & \vdots \\
    0 & 1 & \cdots & 0 & 0 \\
    1 & 0 & \cdots & 0 & 0
\end{pmatrix}.
\]
We then define the modified matrices $\Lambda_m^+(q,a) := R_m(q)^a W_m$ and $\Lambda_m^-(q,a) := W_m L_m(q)^a$.

\begin{defn}\label{def:graph_matrix}
    For a sequence of nonnegative integers \(\mathbf{a}=[a_1, \dots, a_n]\), the corresponding matrix product $X^{(m)}_{\mathbf{a}}(q)$ is defined as:
    \[
    X^{(m)}_{\mathbf{a}}(q) = \Lambda_m^+(q,a_1) \Lambda_m^-(q,a_2) \dots \Lambda_m^{(-1)^{n-1}}(q,a_n),
    \]
    where the sign of the final factor is determined by the parity of $n$.
\end{defn}

\begin{rem}\label{rem: convert_product}
    For $m=1$, the matrix product in Definition~\ref{def:graph_matrix} is the same as the matrix product formula in (\ref{eqn:simple_mtx_prod}), because $$\m{q& 1\\0&1}^{a}\m{0&1\\1&0} = \m{[a]_q&q^a\\1&0}\quad\quad\quad\m{0&1\\1&0}\m{q& 0\\q&1}^{a} = \m{q[a]_q&1\\q^{a_2}&0}$$
\end{rem}

Combining the theorem from~\cite{BOSZ26} with the section 5 from~\cite{Musiker2023HigherDC} we rewrite the theorem as follows.

\begin{thm}[{\cite[Theorem 3.11]{BOSZ26}}]\label{thm: rank_gen_of_snakes}
    Let $S_{ij}$ be set of $m$-dimer covers of $\mathcal{G}[a_1,a_2,\dots,a_{2n}]$ whose height function values on the first and last tiles are at most $m+1-i$ and $m+1-j$, respectively. Then, the $(i,j)$-entry of the matrix $X^{(m)}_{\mathbf{a}}(q)$ is given by:
    \[
    (X^{(m)}_{\mathbf{a}}(q))_{ij} = 
        q^{m+1-j} \Omega_m^{S_{ij}}(\mathcal{G}[a_1,a_2,...,a_{2n}],q)
    \]
\end{thm}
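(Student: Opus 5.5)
The statement is quoted from \cite[Theorem 3.11]{BOSZ26}, rewritten in the height-function language of Section~\ref{subsec: prelim_higher_dimer}. The plan is therefore to translate rather than reprove. I would argue by induction on the total number of tiles $a_1+\cdots+a_{2n}$, building up one matrix factor of $X^{(m)}_{\mathbf{a}}(q)$ at a time.

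First I would identify what each matrix entry tracks. Fix the ground cover $M_0=\hat 0_{\mathcal{D}_m(G)}$. Lemma~\ref{lem:height_diff} says the height $h_M(f)$ of a square face counts how many up-flips at $f$ are needed to reach $M$ from $M_0$. Along a zigzag block, the heights on consecutive tiles are constrained by the inequalities of Lemma~\ref{lem: inequalities_for_heights}. At each shared interior edge, the difference of the two adjacent heights plus $\text{mult}_{M_0}(e)$ must lie in $[0,m]$. By the alternating SW convention of the blocks, this forces the heights to be weakly monotone along the block, increasing in $R$-blocks and decreasing in $L$-blocks relative to $M_0$. This is exactly the $P$-partition structure on a border strip used in \cite{BOSZ26}, so $\mathcal{D}_m(G)$ is in rank-preserving bijection with $P$-partitions bounded by $m$ on the fence poset of the snake graph.

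Next I would do the base case and the inductive step. For a single block, I would check directly that the generating function of weakly monotone height sequences on $a$ tiles, with first value at most $m+1-i$ and last value at most $m+1-j$, equals $(R_m(q)^a W_m)_{ij}$ up to the shift $q^{m+1-j}$. This uses Lemma~\ref{lem:matrix_entries}: the entry $q^{a(m+1-i-k)}\multiset{a}{k}_q$ enumerates multisets of size $k$ from $a$ slots with the $q$-shift coming from a base height. The factor $W_m$ reverses the index so that bounds on the last tile become bounds on the first tile of the next block. When a new block is glued, the height on the last tile of the old block bounds the height on the first tile of the new one. Summing over the intermediate value $l$ then gives the matrix product $\sum_l X_{il}\Lambda_{lj}$. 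The $q$-exponents multiply correctly because the rank $\sum_f h_M(f)$ is additive over tiles.

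The main obstacle is the bookkeeping of the normalization $q^{m+1-j}$ and the index reversals by $W_m$ under the convention fixing $M_0=\hat 0$. I expect off-by-one shifts between the BOSZ ``height'' convention and ours, where $h$ is zero at $\hat 0$ and $S_{ij}$ is defined through an upper bound. I would first verify the case $m=1$ against Remark~\ref{rem: convert_product} and Theorem~\ref{thm:weighted_snake_graph_counting}, and then fix the general exponent by comparing both sides at the extremal covers $\hat 0$ and $\hat 1$.
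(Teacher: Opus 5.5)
The paper does not prove this statement itself. It quotes \cite[Theorem 3.11]{BOSZ26} and transports it to $m$-dimer covers via the $P$-partition dictionary of \cite[Section 5]{Musiker2023HigherDC}. The induction you sketch to carry out that translation uses a false invariant. You read each factor as counting monotone height sequences with an upper bound at \emph{both} ends. But $R_m(q)_{xy}=q^{m+1-y}$ for $x\le y$ and $0$ otherwise, so $(R_m(q)^a)_{ik}=\sum q^{h_1+\cdots+h_a}$ over $m+1-i\ge h_1\ge\cdots\ge h_a$ with $h_a=m+1-k$ \emph{exactly}. Thus $(R_m(q)^aW_m)_{ij}=(R_m(q)^a)_{i,m+2-j}$ bounds the first value and pins the last one to $j-1$. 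It vanishes whenever $i+j>m+2$, whereas your ``at most/at most'' count never vanishes, since the zero sequence always qualifies. Already $R_1(q)W_1=\m{1&q\\1&0}$ has $(2,2)$-entry $0$, so no power of $q$ repairs your base case. The gluing step fails for the same reason. If both factors record only upper bounds at a junction, $\sum_l X_{il}\Lambda_{lj}$ counts each glued configuration once for every admissible $l$. A transfer-matrix product counts correctly only when the summed index is an exact height on one side of the junction and a bound on the other.

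Nor is $q^{m+1-j}$ an off-by-one normalization to be fixed by comparing at $\hat 0$ and $\hat 1$; it comes from a tile count you glossed over. Since $W_m^2=I$, the $W_m$'s cancel, so $X^{(m)}_{\mathbf{a}}(q)=R_m(q)^{a_1}L_m(q)^{a_2}\cdots L_m(q)^{a_{2n}}$. This product has $N=a_1+\cdots+a_{2n}$ elementary factors, whereas $\mathcal{G}[a_1,\ldots,a_{2n}]$ has only $N-1$ tiles. Expanding along index paths $i=i_0,i_1,\ldots,i_N=j$ and putting $h_t=m+1-i_t\in\{0,\ldots,m\}$ gives
\[
(X^{(m)}_{\mathbf{a}}(q))_{ij}=\sum q^{h_1+\cdots+h_N},
\]
summed over $h_1\le m+1-i$, $h_1\ge\cdots\ge h_{a_1}\le\cdots\le h_{a_1+a_2}\ge\cdots$, and $h_N=m+1-j$. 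The phantom value $h_N$ supplies $q^{m+1-j}$ and imposes $h_{N-1}\le m+1-j$. It remains to identify $(h_1,\ldots,h_{N-1})$ with height functions of $m$-dimer covers on a fence whose runs have $a_1-1,a_2,\ldots,a_{2n-1},a_{2n}-1$ relations. This follows from Lemma~\ref{lem: inequalities_for_heights}, because $\hat 0$ uses only boundary edges, so $\text{mult}_{\hat 0}(e)=0$ on interior edges, together with the bipartite coloring at zigzag versus straight tiles. The SW weighting plays no role here. Mind the direction as well: in the paper's conventions heights \emph{decrease} along the first run. For $\mathcal{G}[2,2]$, a row of tiles $T_1,T_2,T_3$, one gets $h(T_1)\ge h(T_2)\le h(T_3)$, which is the opposite of what you wrote for $R$-blocks.
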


Our main result is the following:

\begin{thm}\label{thm: partition_fcn_is_trace}
    Let $\overline{\mathcal{G}}[a_1,...,a_{2n}]$ be a planar band graph. Then, $$\overline{\mathcal{Z}}_{\overline{\mathcal{G}},m} = q^{-m\displaystyle\sum^{n}_{i=1}a_{2i}}\text{tr}(X^{(m)}_{\mathbf{a}}(q))$$
\end{thm}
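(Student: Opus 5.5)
The plan is to reduce the band-graph count to the snake-graph count of Theorem~\ref{thm: rank_gen_of_snakes}, decomposing good $m$-dimer covers according to the multiplicity carried by the glued edge. Write $G=\mathcal{G}[a_1,\ldots,a_{2n}+1]$ for the snake graph whose gluing gives $\overline{G}=\overline{\mathcal{G}}[a_1,\ldots,a_{2n}]$. By definition, a good $m$-dimer cover of $\overline{G}$ is a sum of $m$ good matchings. Each good matching lifts to a perfect matching of $G$ containing $(ab)$ or $(a'b')$, so the lift is essentially unique except for how the copies of the glued edge are distributed between the first and last tile. This suggests the key reformulation, which I would establish first: good $m$-dimer covers of $\overline{G}$ are in bijection with $m$-dimer covers of $G$ (or of the snake $\mathcal{G}[a_1,\ldots,a_{2n}]$ with boundary data) whose height values on the first and last tiles \emph{agree}. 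Equivalently, they are covers with $\text{mult}(e_{a'})=\text{mult}(e_{b'})$ matching the multiplicity pattern at $(ab)$.

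The argument would proceed in three steps. First, I would reinterpret that matching condition, via the computation in the proof of Proposition~\ref{prop: lattice_structure_of_good_m_covers} (namely $h_M(f_0)=0$), as a condition on the pair (height on first tile, height on last tile) of the underlying snake cover. The multiplicities of the two tile-boundary edges that get identified under the gluing are determined by these heights. Second, I would write the trace as a sum of diagonal entries, $\text{tr}(X)=\sum_i X_{ii}$. By Theorem~\ref{thm: rank_gen_of_snakes}, $X_{ij}=q^{m+1-j}\Omega_m^{S_{ij}}$, where $S_{ij}$ consists of covers with first-tile height at most $m+1-i$ and last-tile height at most $m+1-j$. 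The cumulative ("at most") conditions are not exact equalities, so I would use Lemma~\ref{lem:matrix_entries} and the structure of $W_m$ to check that the $W_m$ twists in $\Lambda^\pm$ convert cumulative conditions into the exact-value, diagonal-matching condition after summing $i$ from $1$ to $m+1$. If the entries really are cumulative, an inclusion–exclusion (differences $S_{ij}\setminus S_{i+1,j}$ etc.) shows that the diagonal sum counts exactly the covers with matched boundary heights, with the right $q^{m+1-i}$ shift. Third, I would convert rank generating functions to partition functions using Lemma~\ref{lem: dimer_face_to_partition} and Remark~\ref{rem: face_prod_of_SW_weights}. Every square face has $\hat y_f=q$, so each partition function equals $\omega(\hat 0)\,\Omega$. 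It then remains to compute the weight of the minimal cover under the SW-weighting together with the leftover factors $q^{m+1-i}$, and to check that the product is $q^{m\sum a_{2i}}$, independent of $i$. This is consistent with $q^{-1}$ weights on the even blocks: in $\hat 0$, each even-block boundary SW edge, one per tile, carries multiplicity $m$.

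I expect the main obstacle to be the second step. It requires pinning down exactly which boundary-height pairs $(i,j)$ correspond to good covers, and showing that the diagonal of $X$, not some other linear combination of entries, picks them out with uniform $q$-shift. I would first test the statement on $m=1$ against Theorem~\ref{thm:weighted_band_graph_count} and Remark~\ref{rem: convert_product}, and on $\overline{\mathcal{G}}[2,1]$ with $m=2$ using the lattice in the example above. With those checks in hand, I would try to give a transfer-matrix style argument: cut $\overline{G}$ along the glued edge and interpret $X_{ij}$ as a sum over covers with prescribed multiplicity $m+1-i$ resp.\ $m+1-j$ on the cut edges $(ab)$ and $(a'b')$. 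The trace then imposes equality of the two multiplicities, which is precisely the good condition. If that direct interpretation holds, it sidesteps the inclusion–exclusion entirely.
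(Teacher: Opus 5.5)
There is a genuine gap. The reformulation at the centre of your plan is false, so the steps built on it would fail. (You do correctly identify the good condition $\text{mult}(e_{a'})=\text{mult}(e_{b'})$, the trace as a sum of diagonal entries, and the face products $q$.)

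Good $m$-dimer covers are \emph{not} the covers whose first- and last-tile heights agree. Nor are they, as in your last paragraph, the covers of the cut snake graph $\mathcal{G}[a_1,\ldots,a_{2n}+1]$ with equal multiplicities on $(ab)$ and $(a'b')$. Write $e_c=(ab)$ for the glued edge and $e_a,e_b$ (your $e_{a'},e_{b'}$) for the other two last-tile edges at $a,b$. Suppose a good cover has $\text{mult}(e_c)=z$ and $\text{mult}(e_a)=\text{mult}(e_b)=l$. Its lift to the cut snake graph has $\text{mult}(ab)=l+z$ and $\text{mult}(a'b')=m-l$, so the gluing condition is $\text{mult}(ab)+\text{mult}(a'b')\ge m$, not equality. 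Already for $m=1$, your condition keeps the matchings avoiding both $(ab)$ and $(a'b')$, which glue to nothing. It also discards those containing exactly one of them, and these lift precisely the good matchings not using $e_c$. Consequently the inclusion--exclusion you propose would be proving a false identity. At $q=1$ one has $\sum_{l=0}^{m}|S_{l+1,l+1}|=\sum_{M'}\bigl(m+1-\max(h_{M'}(f_1),h_{M'}(f_{\mathrm{last}}))\bigr)$, which is not the number of covers with $h_{M'}(f_1)=h_{M'}(f_{\mathrm{last}})$.

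The missing idea, which is what the paper uses, is the right statistic and bijection. Decompose not by the glued edge's multiplicity but by $l=\text{mult}(e_a)=\text{mult}(e_b)$, and let $e_d$ be the fourth edge of the last tile. Moving the $l$ units from $e_a,e_b$ onto $e_c,e_d$ (the map $M\mapsto M-M_{l,0}+\tilde M_0$) is a bijection. It sends good covers with this $l$ onto $m$-dimer covers of $\mathcal{G}[a_1,\ldots,a_{2n}]$ with $\text{mult}(e_c)\ge l$ and $\text{mult}(e_d)\ge l$. Equivalently, these are the covers with height at most $m-l$ on both end tiles, which is exactly the cumulative set $S_{l+1,l+1}$ of Theorem~\ref{thm: rank_gen_of_snakes}. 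So the ``at most'' conditions need no undoing: the entry $X_{l+1,l+1}$ already counts the $l$-th piece.

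Your transfer-matrix picture becomes correct once the flux sits on $e_a,e_b$ rather than on two copies of the glued edge. Up to its $q$-prefactor, $X_{ij}$ enumerates multisets on $\mathcal{G}[a_1,\ldots,a_{2n}]$ with degree $m-i+1$ at $a,b$, degree $m-j+1$ at the endpoints of $e_d$, and degree $m$ elsewhere. The trace forces equal deficits, and $i-1$ copies of each of $e_a,e_b$ fill them.

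For the powers of $q$, use the $l$-dependent base point $M_{l,0}$ of each piece, not a single global $\hat 0$. Its weight is $q^{-m(\sum_i a_{2i}-1)-l}$. The factor $q^{-l}$, coming from the $l$ copies of $e_a,e_b$, cancels the $q^{m-l}$ in $X_{l+1,l+1}=q^{m-l}\Omega_m^{S_{l+1,l+1}}$. This yields the uniform factor $q^{-m\sum_i a_{2i}}$; note the negative exponent, not $q^{m\sum_i a_{2i}}$.
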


\begin{proof}
    Let $e_a,e_b$ be edges in the last tile which are incident to vertices $a,b$ and are not the gluing edge $(ab)$. Let $e_c = (ab)$, and $e_d$ denote the remaining edge in the last tile. See an example in Figure~\ref{fig:m_dimer_proof}, we have noted in the proof of Proposition~\ref{prop: lattice_structure_of_good_m_covers} that $\text{mult}_{M}(e_a) = \text{mult}_{M}(e_b)$ for any $M\in \overline{\mathcal{D}}_{\overline{\mathcal{G}},m}$.
    
    \begin{figure}[h!]
    \centering
    \includestandalone[mode=tex, width=0.9\textwidth]{figures/m_dimer_proof}
    \caption{Left: SW-weighting of a band graph; Middle: edge multiplicities of $M_{l,0}$; Right: minimal $m$-dimer cover on a snake graph}
    \label{fig:m_dimer_proof}
    \end{figure}

    Let $l\in \{0,1,...,m\}$, $\overline{\mathcal{D}}_{\overline{\mathcal{G}},m,l}:=\{M\in \overline{\mathcal{D}}_{\overline{\mathcal{G}}[a_1,...,a_{2n}]} | \text{mult}_M(e_a) = l\}$, and $\overline{\mathcal{Z}}_{\overline{\mathcal{G}},m,l}(q):= \displaystyle\sum_{M\in \overline{\mathcal{D}}_{\overline{\mathcal{G}},m,l}}w(M)$. Then, $\overline{\mathcal{Z}}_{\overline{\mathcal{G}},m}(q) = \sum^m_{l=0}\overline{\mathcal{Z}}_{\overline{\mathcal{G}},m,l}(q)$. Moreover, let $M_{l,0}\in \overline{\mathcal{D}}_{\overline{\mathcal{G}},m,l}$ such that $\text{mult}_{M_{l,0}}(e_c)= \text{mult}_{M_{l,0}}(e_d) = m-l$, and $\text{mult}_{M_{l,0}}(e) = \text{mult}_{M_{0}}(e)$ for all $e\notin \{e_a,e_b,e_c,e_d\}$, where $M_0$ is the unique $m$-dimer cover of $\overline{\mathcal{G}}[a_1,...,a_{2n}]$ whose weight has the lowest power in $q$. See ~\ref{fig:m_dimer_proof} in the middle. Let $\Tilde{M}_0$ be the minimal $m$-dimer cover of the snake graph $\mathcal{G}[a_1,a_2,...,a_{2n}]$ as an induced subgraph of $\overline{\mathcal{G}}[a_1,..a_{2n}]$.
    
    Claim that for any $M\in \overline{\mathcal{D}}_{\overline{\mathcal{G}},m,l}$ there is a unique $M'\in \mathcal{D}_{\mathcal{G}[a_1,...,a_{2n}],m}$ such that $h_{M'}(f_1)\leq m-l$ and $h_{M'}(f_{a_1+...+a_{2n}-1})\leq m-l$ with \[\text{mult}_{M'}(e) = \text{mult}_{M}(e) - \text{mult}_{M_{l,0}}(e) + \text{mult}_{\Tilde{M}_0}(e)\] for all $e\in E(\mathcal{G}[a_1,...,a_{2n}]).$

    First it is clear that for all $v\in V(\mathcal{G}[a_1,...,a_{2n}])$, we have
    \begin{equation*}
        \begin{split}
            \sum_{e\ni v}\text{mult}_{M}(e) - \text{mult}_{M_{l,0}}(e) + \text{mult}_{\Tilde{M}_0}(e) &= \sum_{e\ni v}\text{mult}_{M}(e) - \sum_{e\ni v}\text{mult}_{M_{l,0}}(e) + \sum_{e\ni v}\text{mult}_{\Tilde{M}_0}(e)\\
            &= \begin{cases}
                (m-l)-(m-l) + m, \:\:\text{if}\:\:v=a,b,\tilde{a},\tilde{b}\\
                m - m + m,\:\:\text{else}\\
            \end{cases}
            = m\\
        \end{split}
    \end{equation*}

    We also need to check $\text{mult}_{M}(e) - \text{mult}_{M_{l,0}}(e) + \text{mult}_{\Tilde{M}_0}(e)\in \{0,...,m\}$ for all $e\in E(\mathcal{G}[a_1,...,a_{2n}])$. This is because $$\text{mult}_{\Tilde{M}_0}(e)- \text{mult}_{M_{l,0}}(e)=\begin{cases}
        l, \:\:\text{if}\:\:e=e_c,e_d\\
       0,\:\:\text{else}\\
    \end{cases}$$
    Thus, $\text{mult}_{M}(e) - \text{mult}_{M_{l,0}}(e) + \text{mult}_{\Tilde{M}_0}(e)\geq 0$. Moreover, since $\text{mult}_{M}(e_a)=\text{mult}_{M}(e_b) = l$, we have $\text{mult}_{M}(e_c)\leq m-l, \text{mult}_{M}(e_d)\leq m-l$. Thus $\text{mult}_{M}(e) - \text{mult}_{M_{l,0}}(e) + \text{mult}_{\Tilde{M}_0}(e)\leq m$, for all $e\in E(\mathcal{G}[a_1,...,a_{2n}])$. Lastly, $\text{mult}_{M}(e_c)\leq m-l, \text{mult}_{M}(e_d)\leq m-l$ also implies that \[h_{M'}(f_1)\leq m-l, h_{M'}(f_{a_1+...+a_{2n}-1})\leq m-l.\] 

    Furthermore, $M\mapsto M'$ is a bijection between $\overline{\mathcal{D}}_{\overline{\mathcal{G}},m,l}$ and the $m$-dimer covers of the induced snake graph $\mathcal{G}[a_1,...,a_{2n}]$ where $h(f_1)\leq m-l, h(f_{a_1+...+a_{2n}-1})\leq m-l$. Therefore, by a similar argument to the proof of Lemma~\ref{lem: dimer_face_to_partition}, we have
    \begin{equation*}
        \begin{split}
            \overline{\mathcal{Z}}_{\overline{\mathcal{G}},m,l}(q) &= \omega(M_{l,0})\cdot D_{\mathcal{G}[a_1,...,a_{2n}],m,l}(\hat{\mathbf{y}})|_{\hat{y}_f = q}\\
            &= q^{-m(a_2+a_4+...+a_{2n} - 1)}q^{-l}\cdot \Omega^{S_{l+1,l+1}}_m(\mathcal{G}[a_1,...,a_{2n}],q)\\
            &= q^{-m(a_2+a_4+...+a_{2n})}[q^{m-l}\cdot \Omega^{S_{l+1,l+1}}_m(\mathcal{G}[a_1,...,a_{2n}],q)]\\
            &= q^{-m(a_2+a_4+...+a_{2n})}(X^{(m)}_{\mathbf{a}}(q))_{l+1,l+1}\\
        \end{split}
    \end{equation*}
    where the second equality is obtained from generalizing Remark~\ref{rem: rank_gen_equals_face_poly} to an arbitrary subset and combining with Remark~\ref{rem: face_prod_of_SW_weights}; the last equality comes from Theorem~\ref{thm: rank_gen_of_snakes}.
\end{proof}

\section{Palindromy of Some $m$-Dimer Partition Functions}\label{sec: palindromy}

\begin{defn}
    A Laurent polynomial $f(q)\in \mathbb{C}[q^{\pm1}]$ is \textbf{palindromic} if $f(q^{-1}) = q^{a}f(q)$, for some $a\in\mathbb{Z}$.
\end{defn}

In other literature, palindromic is sometimes referred as symmetric. In the case of single dimer covers, the authors of ~\cite{fence_posets} have shown that the rank generating function of good matchings on a band graph, interpreted as rank polynomials of circular fence posets~\cite{musiker2013bases}, is symmetric. Thanks to David Speyer, we apply properties of traces to Theorem~\ref{thm: partition_fcn_is_trace} to obtain the following shorter proof.

\begin{prop}[~\cite{fence_posets}, Theorem 5.1]
    $\overline{\mathcal{Z}}_{\overline{\mathcal{G}},1}(q)$ is palindromic for all band graphs $\overline{\mathcal{G}}[a_1,a_2,...,a_{2n-1},a_{2n}]$. 
\end{prop}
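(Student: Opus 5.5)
Throughout, write $R:=R_1(q)=\m{q&1\\0&1}$ and $L:=L_1(q)=\m{q&0\\q&1}$, and set $N:=a_1+\cdots+a_{2n}$. The plan is to reduce the statement to a symmetry of a trace of a word in $R$ and $L$, and then prove that symmetry by conjugation.

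\emph{Reduction to a trace.} By Theorem~\ref{thm: partition_fcn_is_trace} with $m=1$, $\overline{\mathcal{Z}}_{\overline{\mathcal{G}},1}(q)$ equals $\operatorname{tr}(X^{(1)}_{\mathbf a}(q))$ times a power of $q$. Since multiplying by a monomial preserves palindromy, it suffices to treat the trace. Because $W_1^2=I$, the factors $W_1$ in $\Lambda^+_1(q,a_{2i-1})\Lambda^-_1(q,a_{2i})$ cancel in pairs (cf.\ Remark~\ref{rem: convert_product}). Hence
\[
X^{(1)}_{\mathbf a}(q)=P(q):=R^{a_1}L^{a_2}\cdots R^{a_{2n-1}}L^{a_{2n}}.
\]
The goal is therefore to show $\operatorname{tr}P(q^{-1})=q^{-N}\operatorname{tr}P(q)$.

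\emph{Step 1: inversion $q\mapsto q^{-1}$ via adjugates.} Let $\operatorname{adj}$ denote the $2\times 2$ adjugate, so $\operatorname{adj}(R)=\m{1&-1\\0&q}$ and $\operatorname{adj}(L)=\m{1&0\\-q&q}$. Put $T=\operatorname{diag}(1,-q)$. A direct check gives
\[
qR(q^{-1})=\m{1&q\\0&q}=T^{-1}\operatorname{adj}(R)\,T,\qquad qL(q^{-1})=\m{1&0\\1&q}=T^{-1}\operatorname{adj}(L)\,T,
\]
with the same $T$ working for both matrices. The adjugate is anti-multiplicative, and $\operatorname{tr}\operatorname{adj}(M)=\operatorname{tr}M$ for $2\times2$ matrices. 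Combining these,
\[
\operatorname{tr}P(q^{-1})=q^{-N}\operatorname{tr}\bigl(T^{-1}\operatorname{adj}(L^{a_{2n}}R^{a_{2n-1}}\cdots L^{a_2}R^{a_1})T\bigr)=q^{-N}\operatorname{tr}\bigl(L^{a_{2n}}R^{a_{2n-1}}\cdots R^{a_1}\bigr).
\]
So inversion of $q$ amounts to reversing the word, up to the monomial $q^{-N}$.

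\emph{Step 2 (main obstacle): reversal invariance.} It remains to show that $\operatorname{tr}$ of a word in $R,L$ equals $\operatorname{tr}$ of the reversed word. This is the delicate point, since for three or more general $2\times2$ matrices reversal can change the trace. Here we only need it for words in two matrices, where the classical Fricke--Vogt fact applies: the trace of any word in $A,B$ is a polynomial in $\operatorname{tr}A$, $\operatorname{tr}B$, $\operatorname{tr}AB$, $\det A$, $\det B$. This can be proved by induction on word length using Cayley--Hamilton, $A^2=(\operatorname{tr}A)A-(\det A)I$, together with $\operatorname{tr}(AB)=\operatorname{tr}(BA)$. All of these invariants are unchanged under reversal, so the trace is too.

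If a self-contained argument is preferred, I would instead look for a single matrix $K$ with $KRK^{-1}=R^{T}$ and $KLK^{-1}=L^{T}$. Then $\operatorname{tr}(\text{reverse})=\operatorname{tr}(\text{word}^T)=\operatorname{tr}(\text{word})$. Solving the two linear conditions for $K$ is a short computation, and I expect a solution to exist generically for two matrices. Either route yields $\operatorname{tr}P(q^{-1})=q^{-N}\operatorname{tr}P(q)$, which proves that $\overline{\mathcal{Z}}_{\overline{\mathcal{G}},1}(q)$ is palindromic.
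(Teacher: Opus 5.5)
Your proof is correct, but it is organized differently from the paper's. The paper never reverses the word. It exhibits $X=\begin{pmatrix}q-1&1\\q&1-q\end{pmatrix}$ with $XR_1(q)X^{-1}=L_1(q)$ and $XL_1(q)X^{-1}=R_1(q)$. It then conjugates by $W_1$, using $W_1L_1(q)W_1=qR_1(q^{-1})$ and $W_1R_1(q)W_1=qL_1(q^{-1})$. These two explicit conjugations give $\operatorname{tr}P(q)=q^{N}\operatorname{tr}P(q^{-1})$ at once.

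You instead use the anti-automorphism $\operatorname{adj}$, twisted by $T$, to trade $q\mapsto q^{-1}$ for reversal of the word, and then invoke reversal invariance of traces. This is the same factorization the paper uses for general $m$ in Proposition~\ref{prop: palindromy}. There, $W_m$-conjugation together with Lemma~\ref{lem: reverse_word} amounts to $\operatorname{tr}P(q)=q^{mN}\operatorname{tr}P^{\mathrm{rev}}(q^{-1})$. Your route therefore explains why no hypothesis on $(a_i)$ is needed when $m=1$: every two-letter word in $2\times 2$ matrices has the same trace as its reverse. For $m\ge 2$, reversal invariance fails in general (cf.\ $\overline{\mathcal{G}}[1,2,2,1]$). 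It must then be forced by the almost palindromic condition, which makes $P^{\mathrm{rev}}$ a cyclic rotation of $P$. The paper's route buys brevity and a fully explicit verification.

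Two points are worth tightening.

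First, in the Fricke--Vogt route, the phrase ``the invariants are unchanged under reversal'' does not by itself say that the reversed word gives the same polynomial. Justify it via $w^{\mathrm{rev}}(A,B)=w(A^T,B^T)^T$. The pair $(A^T,B^T)$ has the same $\operatorname{tr}A$, $\operatorname{tr}B$, $\operatorname{tr}AB$, $\det A$, $\det B$ as $(A,B)$. So the same polynomial $p_w$ gives $\operatorname{tr}w^{\mathrm{rev}}(A,B)=\operatorname{tr}w(A,B)$.

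Second, in the alternative route no genericity argument is needed. The matrix $K=\begin{pmatrix}q(q-1)&q\\q&1-q\end{pmatrix}=\operatorname{diag}(q,1)\,X$ satisfies $KRK^{-1}=R^T$ and $KLK^{-1}=L^T$, and $\det K=-q(q^2-q+1)\neq 0$. With this $K$, write $\operatorname{adj}(A)=JA^TJ^{-1}$ for $J=\begin{pmatrix}0&1\\-1&0\end{pmatrix}$. Your total conjugator $T^{-1}JK$ then equals $W_1X$ exactly, so the two proofs are the same conjugation factored differently.
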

\begin{proof}
    By Theorem~\ref{thm: partition_fcn_is_trace}, it suffices to show that $\text{tr}(X^{(m)}_{\mathbf{a}}(q))$ is palindromic. 

    Consider the matrix $X = \m{q-1& 1\\q&1-q}$. For any $q\in \mathbb{R}^+$, we have $\det(X) = -(q^2-q+1)\neq 0$. Moreover, one can check that
    \begin{equation}
        \begin{cases}
            XR_1(q)X^{-1} = L_1(q)\\
            XL_1(q)X^{-1} = R_1(q)\\
        \end{cases}\:\:\text{and}\:\:\begin{cases}
            W_1R_1(q)W_1^{-1} = qL_1(q^{-1})\\
            W_1L_1(q)W_1^{-1} = qR_1(q^{-1})\\
        \end{cases}
    \end{equation}
    Thus, 
    \begin{equation}
        \begin{split}
            \text{tr}(R^{a_1}_1(q)L^{a_2}_1(q)...R^{a_{2n-1}}_1(q)L^{a_{2n}}_1(q))&= \text{tr}(XR^{a_1}_1(q)L^{a_2}_1(q)...R^{a_{2n-1}}_1(q)L^{a_{2n}}_1(q)X^{-1})\\
            &= \text{tr}(L^{a_1}_1(q)R^{a_2}_1(q)...L^{a_{2n-1}}_1(q)R^{a_{2n}}_1(q))\\
            &= \text{tr}(W_1L^{a_1}_1(q)R^{a_2}_1(q)...L^{a_{2n-1}}_1(q)R^{a_{2n}}_1(q)W_1^{-1})\\
            &= q^{a_1+a_2+...+a_{2n}}\text{tr}(R^{a_1}_1(q^{-1})L^{a_2}_1(q^{-1})...R^{a_{2n-1}}_1(q^{-1})L^{a_{2n}}_1(q^{-1}))\\
        \end{split}
    \end{equation}
\end{proof}

However, this fails in the case of double dimer covers for general band graphs. For example, the generating function of good double covers $m=2$ on the band graph $\overline{\mathcal{G}}[1,2,2,1]$ is \(1+2q+5q^2+7q^3+11q^4+14q^5+15q^6+13q^7+12q^8+7q^9+5q^{10}+2q^{11}+q^{12}\) which is not palindromic.

Instead, we prove that for a certain class of band graphs $\overline{\mathcal{G}}[a_1,a_2,...,a_{2n-1},a_{2n}]$, the $m$-dimer partition function $\overline{\mathcal{Z}}_{\overline{\mathcal{G}},m}(q)$ as a polynomial in $q$ is palindromic. First, we provide a straightforward linear algebra proof, using properties of traces. Then, we also provide a purely combinatorial proof in the context of dimer theory. 

\begin{defn}
    We say a sequence of positive integers $a_1,...,a_n$ (with $n\geq 2$) is \textbf{almost palindromic} if the subsequence $a_2,...,a_n$ is palindromic.  
\end{defn}

\begin{rem}
    Lagrange is the first mathematician who proved that for any positive integer $d$ which is not a perfect square, $\sqrt{d}$ has the continued fraction expansion $[a_0,\overline{a_1,a_2,...,a_{n-1},2a_0}]$ where $a_i = a_{n-i}$, for all $i=1,...,n-1$~\cite{widz09}, which resembles our ``almost palindromic" condition above.
\end{rem}
\begin{lem}\label{lem: reverse_word}
    \[\text{tr}(X^{(m)}_{\mathbf{a}}(q))=\text{tr}(R_m(q)^{a_{2n}}L_m(q)^{a_{2n-1}}\cdots L_m(q)^{a_1})\]
\end{lem}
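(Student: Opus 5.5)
The plan is to reduce $\text{tr}(X^{(m)}_{\mathbf{a}}(q))$ to a trace of an alternating product of powers of $R_m(q)$ and $L_m(q)$. We then reverse the order of the factors using invariance of the trace under transposition, and finally undo the effect of transposition on the $q$-deformation by conjugating with $Q_m$. No combinatorics is needed; everything follows from the factorizations $R_m(q)=R_mQ_m$ and $L_m(q)=L_mQ_m$.

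\textbf{Step 1: remove the anti-diagonal matrices.} Since $W_m$ is the permutation matrix of the involution $i\mapsto m+2-i$, we have $W_m^2=I$. Expanding Definition~\ref{def:graph_matrix} for the even-length sequence $\mathbf{a}=[a_1,\dots,a_{2n}]$, the factors $W_m$ occur in adjacent pairs. Indeed, $\Lambda_m^+(q,a_{2i-1})\Lambda_m^-(q,a_{2i})=R_m(q)^{a_{2i-1}}W_mW_mL_m(q)^{a_{2i}}$. Hence
\[
X^{(m)}_{\mathbf{a}}(q)=R_m(q)^{a_1}L_m(q)^{a_2}\cdots R_m(q)^{a_{2n-1}}L_m(q)^{a_{2n}}.
\]
This is consistent with Remark~\ref{rem: convert_product} and with the main theorem as stated in the introduction.

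\textbf{Step 2: transpose.} We use $\text{tr}(A)=\text{tr}(A^T)$. Since $R_m^T=L_m$ and $Q_m$ is diagonal,
\[
R_m(q)^T=(R_mQ_m)^T=Q_mL_m, \qquad L_m(q)^T=Q_mR_m.
\]
Transposing the product from Step 1 reverses the order of the factors, so
\[
\text{tr}(X^{(m)}_{\mathbf{a}}(q))=\text{tr}\bigl((Q_mR_m)^{a_{2n}}(Q_mL_m)^{a_{2n-1}}\cdots(Q_mR_m)^{a_2}(Q_mL_m)^{a_1}\bigr).
\]

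\textbf{Step 3: conjugate by $Q_m$.} $Q_m$ is invertible, and for any matrix $A$ and any $a\ge 0$ we have $Q_m^{-1}(Q_mA)^aQ_m=(AQ_m)^a$; this is seen by writing out the power and regrouping. Conjugating the whole product by $Q_m$ amounts to conjugating each factor separately, because the intermediate $Q_mQ_m^{-1}$ pairs cancel. This turns $(Q_mR_m)^{a}$ into $R_m(q)^{a}$ and $(Q_mL_m)^{a}$ into $L_m(q)^{a}$, so by invariance of trace under conjugation
\[
\text{tr}(X^{(m)}_{\mathbf{a}}(q))=\text{tr}\bigl(R_m(q)^{a_{2n}}L_m(q)^{a_{2n-1}}\cdots R_m(q)^{a_2}L_m(q)^{a_1}\bigr),
\]
as claimed.

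There is no real obstacle here. The only points needing care are the bookkeeping in Step 1, where the parity of the sequence length ensures the $W_m$'s pair up, and the observation that transposition alone gives $Q_mR_m$ rather than $R_mQ_m$, which is exactly what the $Q_m$-conjugation in Step 3 corrects.
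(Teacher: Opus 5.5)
Your proof is correct and follows essentially the same route as the paper: you cancel the $W_m$'s to get $R_m(q)^{a_1}L_m(q)^{a_2}\cdots L_m(q)^{a_{2n}}$, transpose to reverse the word (which produces $Q_mR_m$ and $Q_mL_m$ factors), and then repair the position of $Q_m$. The only cosmetic difference is in that last step: the paper regroups $(Q_mR_m)^a=Q_m(R_mQ_m)^{a-1}R_m$ and moves the single leading $Q_m$ cyclically to the end inside the trace, whereas you conjugate by the invertible matrix $Q_m$, which amounts to the same thing.
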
\
\begin{proof}
    \begin{align*}
        \text{tr}(X^{(m)}_{\mathbf{a}}(q))&=\text{tr}(R_m(q)^{a_1}L_m(q)^{a_2}\cdots L_m(q)^{a_{2n}})\\
        &=\text{tr}((R_m(q)^{a_1}L_m(q)^{a_2}\cdots L_m(q)^{a_{2n}})^T)\\
        &=\text{tr}((L_m(q)^{a_{2n}})^T(R_m(q)^{a_{2n-1}})^T\cdots (R_m(q)^{a_1})^T)\\
        &=\text{tr}(Q_m(R_mQ_m)^{a_{2n}-1}R_mQ_m(L_mQ_m)^{a_{2n-1}-1}L_m\cdots Q_m(L_mQ_m)^{a_1-1}L_m)\\
        &=\text{tr}(Q_mR_m(q)^{a_{2n}}L_m(q)^{a_{2n-1}}\cdots L_m(q)^{a_1-1}L_m)\\
        &=\text{tr}(R_m(q)^{a_{2n}}L_m(q)^{a_{2n-1}}\cdots L_m(q)^{a_1-1}L_mQ_m)\\
        &=\text{tr}(R_m(q)^{a_{2m}}L_m(q)^{a_{2m-1}}\cdots L_m(q)^{a_1})
    \end{align*}

From the third to the last line to the second to the last line, we used the fact $\text{Tr}(ABC)=\text{Tr}(BCA)$.
\end{proof}

\begin{prop}\label{prop: palindromy}
    Let $a_1,a_2,...,a_{2n-1},a_{2n}$ be an almost palindromic sequence, and $\overline{\mathcal{G}}[a_1,a_2,...,a_{2n-1},a_{2n}]$ be a corresponding band graph. Then, $\overline{\mathcal{Z}}_{\overline{\mathcal{G}},m}(q)$ is palindromic. 
\end{prop}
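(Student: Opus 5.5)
The plan is to reduce palindromy of $\overline{\mathcal{Z}}_{\overline{\mathcal{G}},m}(q)$ to a symmetry of the trace. I would generalize the trace argument used above for $m=1$, replacing the conjugating matrix $X$ with the word-reversal identity of Lemma~\ref{lem: reverse_word}. By Theorem~\ref{thm: partition_fcn_is_trace}, $\overline{\mathcal{Z}}_{\overline{\mathcal{G}},m}(q)$ equals $\text{tr}(X^{(m)}_{\mathbf{a}}(q))$ times the monomial $q^{-m\sum_i a_{2i}}$. Multiplying by a monomial preserves palindromy, so it suffices to show that
\[T(q):=\text{tr}\bigl(R_m(q)^{a_1}L_m(q)^{a_2}\cdots R_m(q)^{a_{2n-1}}L_m(q)^{a_{2n}}\bigr)\]
satisfies $T(q^{-1})=q^{c}\,T(q)$ for some integer $c$. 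As in the proof of Lemma~\ref{lem: reverse_word}, I identify $\text{tr}(X^{(m)}_{\mathbf{a}}(q))$ with $T(q)$; the $W_m$ factors cancel in pairs because $W_m^2=I$.

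\textbf{Step 1 (inversion via $W_m$).} Conjugation by $W_m$ reverses both rows and columns. Hence $W_mR_mW_m=L_m$, $W_mL_mW_m=R_m$, and $W_mQ_mW_m=\text{diag}(1,q,\dots,q^m)=q^mQ_m(q^{-1})$, where $Q_m(q^{-1})$ denotes $Q_m$ with $q$ replaced by $q^{-1}$. Therefore
\[W_mR_m(q)W_m=q^mL_m(q^{-1}),\qquad W_mL_m(q)W_m=q^mR_m(q^{-1}).\]
Conjugating the whole product by $W_m$ and using invariance of the trace gives
\[T(q)=q^{m(a_1+\cdots+a_{2n})}\,\text{tr}\bigl(L_m(q^{-1})^{a_1}R_m(q^{-1})^{a_2}\cdots L_m(q^{-1})^{a_{2n-1}}R_m(q^{-1})^{a_{2n}}\bigr).\]

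\textbf{Step 2 (swapping $L$ and $R$ back).} It remains to show that, as a function of $q$,
\[\text{tr}(L^{a_1}R^{a_2}\cdots L^{a_{2n-1}}R^{a_{2n}})=\text{tr}(R^{a_1}L^{a_2}\cdots R^{a_{2n-1}}L^{a_{2n}})\]
for almost palindromic $\mathbf{a}$. First, cyclicity of the trace moves $L^{a_1}$ to the end. This gives $\text{tr}(R^{a_2}L^{a_3}\cdots R^{a_{2n}}L^{a_1})$, which is $\text{tr}$ of the product attached to the even-length sequence $\mathbf{b}=(a_2,a_3,\dots,a_{2n},a_1)$, beginning with $R$.

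Next, apply Lemma~\ref{lem: reverse_word} to $\mathbf{b}$. It gives $\text{tr}(R^{a_1}L^{a_{2n}}R^{a_{2n-1}}\cdots R^{a_3}L^{a_2})$. Finally, almost palindromy says $a_k=a_{2n+2-k}$ for $2\le k\le 2n$. Substituting this turns the word into $R^{a_1}L^{a_2}R^{a_3}\cdots L^{a_{2n}}$, as required. Combining with Step 1 yields $T(q)=q^{m\sum a_i}T(q^{-1})$, which is palindromy.

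\textbf{Main obstacle.} The delicate point is Lemma~\ref{lem: reverse_word}. Transposition alone sends $R_m(q)=R_mQ_m$ to $Q_mL_m$, not to $L_m(q)$. The diagonal factors must therefore be re-associated through the cyclic trace, and I will double-check that this works for an arbitrary even-length exponent sequence such as $\mathbf{b}$. I also need to check that the matrix identities in Step 1 hold exactly with the factor $q^m$ and no extra diagonal twist. If either check failed, my fallback would be a combinatorial involution on good $m$-dimer covers that reverses the snake and complements the multiplicities, but I expect the algebraic route to go through.
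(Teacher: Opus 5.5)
Your proposal is correct and is essentially the paper's own algebraic proof. The paper conjugates by $W_m$ to obtain $q^{m\sum_i a_i}$ times the $L$/$R$-swapped trace at $q^{-1}$, cycles $L_m(q^{-1})^{a_1}$ to the end, applies Lemma~\ref{lem: reverse_word} to the rotated sequence $(a_2,\dots,a_{2n},a_1)$, and then uses almost palindromy, exactly as you do. The concern you flag about Lemma~\ref{lem: reverse_word} resolves as you anticipate: after transposing, each factor has the form $Q_m$ times $R_m$ or $L_m$, and moving the leading $Q_m$ to the end under the trace works for any exponent sequence, so applying the lemma to the rotated sequence is legitimate.
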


\begin{proof}
    By Theorem~\ref{thm: partition_fcn_is_trace}, it suffices to show that $$R(q):= \text{tr}(X^{(m)}_{\mathbf{a}}(q)) = \text{tr}(R^{a_1}_m(q)L^{a_2}_m(q)...R^{a_{2n-1}}_m(q)L^{a_{2n}}_m(q))$$ is palindromic, for all $m\geq 1$.

    First, note that $W_mR_m(q)W_m = q^{m}L_m(q^{-1})$, and $W_mL_m(q)W_m = q^{m}R_m(q^{-1})$. Thus,
    \begin{equation}
        \begin{split}
            R(q) &=\text{tr}(R^{a_1}_m(q)L^{a_2}_m(q)...R^{a_{2n-1}}_m(q)L^{a_{2n}}_m(q))\\
            &= \text{tr}(W_mR^{a_1}_m(q)L^{a_2}_m(q)...R^{a_{2n-1}}_m(q)L^{a_{2n}}_m(q)W_m)\\
            &= q^{m(a_1+a_2+...+a_{2n-1}+a_{2n})}\text{tr}(L^{a_1}_m(q^{-1})R^{a_2}_m(q^{-1})...L^{a_{2n-1}}_m(q^{-1})R^{a_{2n}}_m(q^{-1}))\\
            &= q^{m(a_1+a_2+...+a_{2n-1}+a_{2n})}\text{tr}(R^{a_2}_m(q^{-1})...L^{a_{2n-1}}_m(q^{-1})R^{a_{2n}}_m(q^{-1})L^{a_1}_m(q^{-1}))\\
            &= q^{m(a_1+a_2+...+a_{2n-1}+a_{2n})}\text{tr}(R^{a_1}_m(q^{-1})L^{a_{2n}}_m(q^{-1})R^{a_{2n-1}}_m(q^{-1})...L^{a_2}_m(q^{-1}))\\
        \end{split}
    \end{equation}
    where the last step follows directly from Lemma~\ref{lem: reverse_word}.

    Lastly, since $(a_1,...,a_{2n})$ is almost palindromic, we have $(a_2,...,a_{2n-1},a_{2n}) = (a_{2n},a_{2n-1},...,a_2)$. Thus, 
    \begin{equation}
        \begin{split}
            \text{tr}(R^{a_1}_m(q^{-1})L^{a_{2n}}_m(q^{-1})R^{a_{2n-1}}_m(q^{-1})...L^{a_2}_m(q^{-1}))&= \text{tr}(R^{a_1}_m(q^{-1})L^{a_{2}}_m(q^{-1})...R^{a_{2n-1}}_m(q^{-1})L^{a_{2n}}_m(q^{-1})) = R(q^{-1})\\
            \implies R(q) &= q^{m(a_1+a_2+...+a_{2n-1}+a_{2n})}R(q^{-1})\\
        \end{split}
    \end{equation}
\end{proof}

We next present a combinatorial proof of Proposition~\ref{prop: palindromy}. First, we set up more background on the theory of dimer models. 

\begin{defn}
    Given a planar bipartite graph $G$ with edge weights $\omega: E(G)\rightarrow \mathbb{R}^+$. A \textbf{gauge transformation} at vertex $v\in V(G)$ is an operation to obtain a new edge weighting $\omega'$, by multiplying the weights of all edges incident to $v$ simultaneously by some $\lambda\in \mathbb{R}^+$. We say two edge weights $\omega,\omega': E(G)\rightarrow \mathbb{R}^+$ are \textbf{gauge equivalent} if $\omega'$ can be obtained from $\omega$ via a sequence of gauge transformations.
\end{defn}

A practical way to detect gauge equivalence is by checking the face alternating products, defined in ~\ref{def: face_alt_prod}. 

\begin{lem}[{\cite[Section 3.2]{kenyon2009lecturesdimers}}]
    Let $G$ be a finite planar bipartite graph whose (non-infinite) faces are all discs. Then two weights $\omega,\omega'$ are gauge equivalent if and only if the face alternating products around any face $f$ are the same, i.e., $\hat{y}^{\omega}_f = \hat{y}^{\omega'}_f$.
\end{lem}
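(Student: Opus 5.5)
The plan is to prove both directions by working with logarithms of weights and cohomology on the cell complex of the planar embedding. Gauge transformations act on $\log\omega$ by adding a ``coboundary'' of a vertex function, and face alternating products are its ``curl'' around faces. Write $\ell(e) := \log\omega(e) - \log\omega'(e)$ for each edge $e$. Orient each edge from black to white and regard $\ell$ as a real $1$-cochain.

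\emph{Necessity.} A gauge transformation at $v$ by $\lambda$ multiplies exactly two consecutive boundary edges of any face $f$ containing $v$. One of these is an $e_{2i-1}$ and the other an $e_{2i}$ in the notation of Definition~\ref{def: face_alt_prod}, because the boundary alternates colors. Thus $\lambda$ cancels in $\hat{y}_f$. Iterating gives $\hat{y}^{\omega}_f = \hat{y}^{\omega'}_f$ for every bounded face.

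\emph{Sufficiency.} Assume $\hat{y}^{\omega}_f=\hat{y}^{\omega'}_f$ for all bounded faces $f$. Then the alternating sum of $\ell$ around each bounded face vanishes, where an edge is counted with sign $+$ when traversed black-to-white and $-$ otherwise (going clockwise). This says the discrete line integral of $\ell$ (using the black-to-white orientation) around each bounded face boundary is $0$.

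Since $G$ is planar, connected, and every bounded face is a disc, the bounded face boundaries generate the cycle space $H_1(G)$. Indeed, the planar embedding plus the bounded faces is a contractible $2$-complex (a disc with possibly attached trees, union of discs glued along the graph), so $H_1$ of the $2$-complex vanishes. Hence $\ell$ integrates to zero around every cycle of $G$.

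Fix a base vertex $v_0$ and define $\phi(v)$ to be the signed sum of $\ell$ along any path from $v_0$ to $v$; this is well defined by path-independence. Then $\ell(e)=\phi(w)-\phi(b)$ for each edge $e=(b,w)$.

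Now apply a gauge transformation at each white vertex $w$ by $\lambda_w = e^{-\phi(w)}$, and at each black vertex $b$ by $\lambda_b = e^{\phi(b)}$, both acting on $\omega$. The edge $(b,w)$ is multiplied by $e^{\phi(b)-\phi(w)} = e^{-\ell(e)} = \omega'(e)/\omega(e)$, which turns $\omega$ into $\omega'$.

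The only step needing care is the claim that bounded face boundaries span the cycle space. I would justify it by Euler's formula: the cycle space has dimension $|E|-|V|+1$, which equals the number of bounded faces. The face boundaries are linearly independent because each bounded face can be peeled off from the outer region, so a nontrivial combination would have some face with an edge shared only with the unbounded side. Hence the face boundaries form a basis of the cycle space.
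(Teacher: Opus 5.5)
Your proof is correct and is the standard argument behind the result the paper cites from Kenyon; the paper gives no proof of its own here, but it uses exactly this closed-implies-exact reasoning for the log-ratio $1$-form in Lemma~\ref{lem: conditions_for_double_gauge}. The argument in both cases is: $\log\omega-\log\omega'$ has zero circulation around every bounded face, the bounded face boundaries span the cycle space, and the resulting vertex potential gives the gauge. Two points are worth tightening: a vertex may occur several times on a face's boundary walk (the cancellation in $\hat{y}_f$ still happens at each occurrence), and the independence of face boundaries is cleanest by noting that faces sharing an edge must carry equal coefficients, so connectivity of the dual graph together with coefficient $0$ on $f_\infty$ forces every coefficient to vanish.
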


\begin{rem}
    Note that if two weights $\omega,\omega'$ are gauge equivalent, then the dimer partition functions computed with respect to them are scalar multiples of each other. 
\end{rem}

The key idea of the combinatorial proof is that, we show that Ovsienko's SW weighting is in a broader sense "gauge equivalent" to a NE weighting, and the dimer partition function computed with respect to the north-east weighting is equal to substituting $q\mapsto q^{-1}$ in the original dimer partition functions. Moreover, each gauge transformation only multiplies on each vertex by $q^{a}$ for some $a\in \mathbb{Z}$. 

\begin{defn}\label{defn: NE_weighting}
    Embed the underlying snake graph of $\overline{\mathcal{G}}[a_1,a_2,\dots,a_{2n-1},a_{2n}]$ into the $\mathbb{Z}^2$ grid such that the lower-left vertex of the first tile is located at $(0,0)$.
    \begin{enumerate}
    \item For the north edges of the tiles that lie on the boundary of the graph, we label the edge in the first column with $q$ if its endpoints are at $(0,2i)$ and $(1,2i)$, and with $q^{-1}$ if its endpoints are at $(0,2i+1)$ and $(1,2i+1)$, subsequently alternating the labels between $q$ and $q^{-1}$ for the remaining boundary edges.
    \item For the east edges of the tiles that lie on the boundary of the graph, we label the edge in the first row with $q$ if its endpoints are at $(2i+1,0)$ and $(2i+1,1)$, and with $q^{-1}$ if its endpoints are at $(2i,0)$ and $(2i,1)$, subsequently alternating the labels between $q$ and $q^{-1}$ for the remaining boundary edges.
    \end{enumerate}
    Thus we assigned weights to the north-east edges of the underlying snake graph, and gluing the end-edges induces a weighting on the band graph. We call this the \textbf{North-East Weighting} of the band graph.
\end{defn}

\begin{exmp}\label{exmp:not_gauge_equiv_weights}
    Consider the band graph $\overline{\mathcal{G}}[1,2,2,1]$. See Figure~\ref{fig:sw_ne_weighting} left for Osvienko's south-west weighting. See Figure~\ref{fig:sw_ne_weighting} right for the corresponding north-east weighting rules. 

    \begin{figure}[h!]
    \centering
    \includestandalone[mode=tex, width=0.6\textwidth]{figures/sw_ne_weighting}
    \caption{SW weighting is not gauge equivalent to NE weighting}
    \label{fig:sw_ne_weighting}
    \end{figure}

    Note that, with respect to the same planar embedding (See Figure~\ref{fig:sw_ne_weighting} in the middle), the south-west weighting is \textbf{not} gauge equivalent to the north-east weighting, because the face alternating product of $f_0$ are different: $q^{-2}$ for the south-west weighting, and $q^{-4}$ for the north-east weighting.
\end{exmp}

However, we will next show that, the $m$-dimer partition functions, when computed with respect to the SW and NE weightings, only differ by a multiple of $q^{a}$ for some $a\in \mathbb{Z}$, via the following special constructions.

\begin{defn}
    Start with a band graph $\overline{\mathcal{G}}[a_1,a_2,...,a_{2n-1},a_{2n}]$, and consider its snake graph counterpart, $G := \mathcal{G}[a_1,a_2,...,a_{2n-1},a_{2n}+1]$. Label the north-west side edges of $G$ in increasing order along the arrows, $e_1,e_2,...,e_{l}$, and label the south-east side edges of $G$ excluding the south-edge of the first tile, by $e_{l+1},...,e_{2(a_1+a_2+...+a_{2n})}$. Take $G'$ to be a copy of $G$, with copied labels $e'_i, i\in [2(a_1+...+a_{2n})]$. Glue $G'$ to $G$ along the last edge $e_c$ such that the edges $e'_{l+1},...,e'_{2(a_1+a_2+...+a_{2n})}$of $G'$ are on the same side as $e_1,...,e_{l}$ of $G$. Lastly, glue the south edge of the first tile of $G$ to the last edge $e'_c$ of $G'$. We call the resulting graph the \textbf{doubling graph} of $\overline{\mathcal{G}}[a_1,a_2,...,a_{2n-1},a_{2n}]$, and denote it as $\overline{\mathcal{G}}^2[a_1,a_2,...,a_{2n-1},a_{2n}]$.
\end{defn}

\begin{exmp}\label{exmp: doubling_graph}
Continuing with Example~\ref{exmp:not_gauge_equiv_weights}. We construct the doubling graph $\overline{\mathcal{G}}^2[1,2,2,1]$ in the middle diagram of Figure~\ref{fig:doubling_graph_ex}.
    \begin{figure}[h!]
    \centering
    \includestandalone[mode=tex, width=0.6\textwidth]{figures/doubling_graph_ex}
    \caption{Constructing the doubled graph from a band graph}
    \label{fig:doubling_graph_ex}
    \end{figure}
\end{exmp}

\begin{rem}\label{rem: pairing_rules}
    The doubling graph constructed in such a way has a natural pairing of its boundary edges, $e_i\sim e'_i$. Each pair of edges $e_i\sim e'_i$ has the same edge weights. Moreover, such pairings also extend naturally to a pairing of all edges and all vertices. For example, the vertex $a$ of the first tile is paired up with $a'$ in the $(a_1+...+a_{2n})$-th tile in $\overline{\mathcal{G}}^2[a_1,...,a_{2n}]$. One can see that the edges incident to $a,a'$ are also in pairs with each other. For example, in Figure~\ref{fig:doubling_graph_ex}, the neighboring edges of $a,a'$ are $e_1,e_{12}, (ab)$. We have $e'_1,e'_{12}, (a'b')$, and $e_1\sim e_1',e_{12}\sim e'_{12}, (ab)\sim (a'b')$. Note, however, each pair of vertices have opposite colors.  
\end{rem}

\begin{defn}
    Let $v,v'$ be a pair of vertices of $\overline{\mathcal{G}}^2[a_1,...,a_{2n}]$ under the above pairing rules in Remark~\ref{rem: pairing_rules}. Let $\lambda\in\mathbb{R}^+$. A \textbf{doubled gauge transformation} at $(v,v')$ is the change of edge weights $w(e)\mapsto\lambda w(e), w(e')\mapsto\lambda w(e')$ for all edges $e\sim v, e'\sim v'$. 
\end{defn}

\begin{rem}\label{rem:embedding_of_dimers}
    Note that any good $m$-dimer cover $M$ on $\overline{\mathcal{G}}[a_1,...,a_{2n}]$ extends to a $m$-dimer cover $M^2$ on $\overline{\mathcal{G}}^2[a_1,...,a_{2n}]$, by defining $\text{mult}_{M^2}(e') := \text{mult}_{M}(e)$ for each pair of vertices $e\sim e'$. With respect to an edge weighting $w$, we have $w(M^2) = w(M)^2$. 
\end{rem}

\begin{exmp}
    We continued with the doubled graph constructed in Example~\ref{exmp: doubling_graph}. Figure~\ref{fig:doubled_gauge_ex} below shows a sequence of doubled gauge transformations that changed the SW weights of the induced subgraph $\mathcal{G}[a_1,...,a_{2n}+1]$ to the NE side. 
    \begin{figure}[h!]
    \centering
    \includestandalone[mode=tex, width=0.9\textwidth]{figures/doubled_gauge}
    \caption{A sequence of doubled gauge transformations. An orange circle on a vertex corresponds to $\lambda = q^{-1}$, and a green circle on a vertex corrresponds to $\lambda = q$}
    \label{fig:doubled_gauge_ex}
    \end{figure}
\end{exmp}

We next show there is always such a sequence of doubled gauge transformations to "move" the SW weights to the NE side.

\begin{lem}\label{lem: conditions_for_double_gauge}
    Fix $q\in \mathbb{R}^+$. Let $G_1$ be a planar bipartite graph considered in the ambient space $\mathbb{S}^2$ where antipetal vertices have opposite colors, and $G_0$ be a graph embedded in $\mathbb{R}\mathbb{P}^2$. Let $\pi: G_1\rightarrow G_0$ be a degree $2$ covering map by identifying antipetal vertices of $G_1$, $v^\circ_j\sim v^\bullet_j$. Let $w_1,w_2$ be edge weights on $G_1$ such that $w_i((v^\circ_jv^\bullet_{j'}))=w_i((v^\bullet_jv^\circ_{j'})) = q^{a_{i,j,j'}}$ for some $a_{i,j,j'}\in\mathbb{Z}, i=1,2$. Suppose further that around each face $F$ of $G_1$, the face alternating products of $w_1,w_2$ are the same. Then, there exists a sequence of pairs of vertices $(v^\circ_{j_1}, v^\bullet_{j_1}),...,(v^\circ_{j_k}, v^\bullet_{j_k})$, and constants $\lambda_1=q^{n_1},...,\lambda_k = q^{n_k}, n_1,...,n_k\in \frac{1}{2}\mathbb{Z}$, such that after performing gauge transformations on $v^\circ_{j_t}, v^\bullet_{j_t}$ with $\lambda_t$ for all $t=1,...,k$, we can transform $w_1$ to $w_2$. 
\end{lem}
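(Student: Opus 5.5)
Pass to additive notation. Write $w_1 = q^{\alpha}$ and $w_2 = q^{\beta}$ with $\alpha,\beta : E(G_1)\to\mathbb{Z}$, and set $\delta := \beta-\alpha$. The hypothesis $w_i((v^\circ_jv^\bullet_{j'}))=w_i((v^\bullet_jv^\circ_{j'}))$ says that $\delta$ is invariant under the deck involution $\sigma$ of $\pi$, so $\delta$ descends to a function $\bar\delta$ on $E(G_0)$. A sequence of doubled gauge transformations with exponents $n_t$ amounts to a function $\phi : V(G_0)\to \tfrac12\mathbb{Z}$. Its effect on an edge $e$ of $G_1$ with endpoints $u,v$ is $\alpha(e)\mapsto \alpha(e)+\phi(\pi u)+\phi(\pi v)$. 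The problem is therefore to solve
\[
\phi(x)+\phi(y)=\bar\delta(xy)\quad\text{for every edge } xy \text{ of } G_0,
\]
with $\phi$ half-integer valued. This is a coboundary equation for the ``unsigned'' boundary operator $\partial^+ : C^0(G_0)\to C^1(G_0)$, $\phi\mapsto(\phi(x)+\phi(y))$. This operator is not the usual coboundary, because $G_0$ need not be bipartite: antipodal vertices of $G_1$ have opposite colors, so the 2-coloring does not descend.

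The plan is to convert this into ordinary cohomology on $G_1$. First, solve on $G_1$ itself. Fix the coloring of $G_1$ and define $\psi := \phi\circ\pi$ with a sign twist, $\tilde\psi(v) = \epsilon(v)\psi(v)$, where $\epsilon = +1$ on white and $-1$ on black vertices. The equation $\psi(u)+\psi(v)=\delta(uv)$ then becomes $\tilde\psi(\text{white})-\tilde\psi(\text{black}) = \delta(e)$, which is the ordinary coboundary equation $d\tilde\psi = \delta$ on the bipartite graph $G_1$, with edges oriented white to black. The face-alternating-product hypothesis says exactly that the alternating sum of $\delta$ around every face of $G_1$ is zero. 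Faces of $G_1$ generate the cycle space of the sphere graph, since $H_1(\mathbb{S}^2)=0$, so the cocycle $\delta$ is exact on each connected component. This is the kenyon2009lecturesdimers gauge lemma already quoted, so there is an integer-valued $\tilde\psi_0$ with $d\tilde\psi_0=\delta$. It is unique up to an additive constant $c$ on each component; I will assume $G_1$ is connected.

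Next, make the solution $\sigma$-equivariant. We need $\psi(\sigma v)=\psi(v)$, i.e.\ $\tilde\psi(\sigma v)=-\tilde\psi(v)$, because $\sigma$ swaps colors. Since $\delta\circ\sigma=\delta$ and $\sigma$ reverses the white-to-black orientation, the function $-\tilde\psi_0\circ\sigma$ also solves $d(\cdot)=\delta$. By uniqueness, $-\tilde\psi_0\circ\sigma = \tilde\psi_0 + c_0$ for a constant $c_0\in\mathbb{Z}$. Now replace $\tilde\psi_0$ by $\tilde\psi := \tilde\psi_0 + c_0/2$. A direct check gives $-\tilde\psi\circ\sigma = \tilde\psi_0+c_0-c_0/2 = \tilde\psi$. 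So $\tilde\psi$ is $\sigma$-anti-invariant and solves the equation. It takes values in $\tfrac12\mathbb{Z}$, which is exactly why the statement allows $n_t\in\tfrac12\mathbb{Z}$.

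Finally, set $\phi(\pi v) := \epsilon(v)\tilde\psi(v)$, which is well defined by anti-invariance. Performing the doubled gauge transformation at each pair $(v^\circ_j,v^\bullet_j)$ with $\lambda_j=q^{\phi(j)}$ transforms $w_1$ into $w_2$.

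I expect the main obstacle to be the equivariance step: the relation $-\tilde\psi_0\circ\sigma=\tilde\psi_0+c_0$ requires a careful sign check of how $\sigma$ interacts with the edge orientation and with $\delta$. One also has to confirm that the face-product hypothesis on all faces of $G_1$ (including whichever one is treated as infinite) suffices for exactness on the sphere.
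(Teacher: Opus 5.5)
Your proposal is correct and is essentially the paper's argument. Your shifted potential $\tilde\psi=\tilde\psi_0+c_0/2$ equals $\tfrac12(\tilde\psi_0-\tilde\psi_0\circ\sigma)$, which is exactly the paper's $g=\tfrac12(f-f\circ\sigma)$, where $f$ is an integer potential of the closed $1$-form $\omega_1-\omega_2$ on $\mathbb{S}^2$. The only difference is how you check that this still solves the equation: you use uniqueness of the potential up to a constant on the connected graph $G_1$, whereas the paper verifies it edge by edge from the invariance of $\omega_1-\omega_2$ under the involution.
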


\begin{proof}
    Define $1$-forms on oriented edges of $G_1$, by $$\omega_i(\Vec{e}):=\begin{cases}
        \log_q(w_i(e)), \:\:\text{if}\:\:\Vec{e}=\circ\xrightarrow[]{e}\bullet\\
        -\log_q(w_i(e)),
        \:\:\text{if}\:\:\Vec{e}=\bullet\xrightarrow[]{e}\circ\\
    \end{cases}, i=1,2$$ 
    Since the face alternating products of $w_1,w_2$ are the same around any face of $G_1$, we know $d\omega_1(f) =d\omega_2(f)$. That is, $\omega_1-\omega_2$ is a closed $1$-form. Since $H^1(\mathbb{S}^2;\mathbb{Z})=0$, any closed $1$-form is also exact: there exists $f: V(G_1)\rightarrow \mathbb{Z}$, such that $df = \omega_1-\omega_2$. 

    Since $w_i((v^\circ_jv^\bullet_{j'}))=w_i((v^\bullet_jv^\circ_{j'})), i=1,2$, we have 
    \begin{equation}\label{eqn: special_prop_of_f}
        \begin{split}
            (\omega_1-\omega_2)((v^\circ_jv^\bullet_{j'})) &= (\omega_1-\omega_2)((v^\circ_{j'}v^\bullet_{j}))\\
            \implies f(v^\bullet_{j'})-f(v^\circ_j) &= f(v^\bullet_{j})-f(v^\circ_{j'})
        \end{split}
    \end{equation}
    
    Now, define $g:V(G)\rightarrow \frac{1}{2}\mathbb{Z}$, by $g(v^\circ_j) := \frac{1}{2}(f(v^\circ_j)-f(v^\bullet_j))$, and $g(v^\bullet_j) := \frac{1}{2}(f(v^\bullet_j)-f(v^\circ_j))$. Then,
    \begin{equation*}
        \begin{split}
            dg((v^\circ_jv^\bullet_{j'}))&= g(v^\bullet_{j'})-g(v^\circ_j)\\
            &= \frac{1}{2}(f(v^\bullet_{j'})-f(v^\circ_{j'}))-\frac{1}{2}(f(v^\circ_j)-f(v^\bullet_j))\\
            &= \frac{1}{2}(f(v^\bullet_{j'})-f(v^\circ_j))+\frac{1}{2}(f(v^\bullet_j)-f(v^\circ_{j'}))\\
            &= f(v^\bullet_{j'})-f(v^\circ_j) = df((v^\circ_jv^\bullet_{j'}))=(\omega_1-\omega_2)((v^\circ_jv^\bullet_{j'}))\\
        \end{split}
    \end{equation*}
    where the last line follows from (~\ref{eqn: special_prop_of_f}). Similarly, $dg((v^\bullet_jv^\circ_{j'})) = (\omega_1-\omega_2)((v^\bullet_jv^\circ_{j'}))$. 

    Lastly, we examine the effect of $g$ on the level of edge weights, $w_1,w_2$. By definition, we have $$\log_q(\frac{w_1((v^\circ_jv^\bullet_{j'}))}{w_2((v^\circ_jv^\bullet_{j'}))}) = (\omega_1-\omega_2)((v^\circ_jv^\bullet_{j'})) = g(v^\bullet_{j'})-g(v^\circ_j)$$ Thus, $w_2((v^\circ_jv^\bullet_{j'})) = \displaystyle\frac{q^{g(v^\circ_j)}}{q^{g(v^\bullet_{j'})}}w_1((v^\circ_jv^\bullet_{j'}))$. This is saying that $g: V(G)\rightarrow \frac{1}{2}\mathbb{Z}$ corresponds to performing gauge transformation on each white vertex $v^\circ_j$ by $q^{g(v^\circ_j)}$ and on each black vertex $v^\bullet_j$ by $q^{-g(v^\bullet_j)}$. Note that, $g(v^\bullet_j) = \frac{1}{2}(f(v^\bullet_j)-f(v^\circ_j)) = -g(v^\circ_j)$. Thus, on the black vertex $v^\bullet_j$, the multiplicative constant is $q^{-g(v^\bullet_j)} = q^{g(v^\circ_j)}$, the same as the multiplicative constant on its antipetal white vertex $v^\circ_j$.
\end{proof}

\begin{cor}\label{cor:SW_to_NE_weights}
    Let $\overline{\mathcal{G}}^2[a_1,...,a_{2n}]$ be a doubling graph of a band graph. Then there exists a sequence of doubled gauge transformations to change the SW-weighting of the induced snake graph $\mathcal{G}[a_1,...,a_{2n}+1]$ to its NE-weighting, where the last tile contains a $q^{-1}$ weighted edge. 
\end{cor}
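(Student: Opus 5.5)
The plan is to apply Lemma~\ref{lem: conditions_for_double_gauge} with $G_1 = \overline{\mathcal{G}}^2[a_1,\dots,a_{2n}]$, regarded as a planar bipartite graph on $\mathbb{S}^2$, and with $G_0$ its quotient under the pairing $v\sim v'$ of Remark~\ref{rem: pairing_rules}. We take $w_1$ to be the SW-weighting and $w_2$ the NE-weighting, each placed on both copies $G$ and $G'$ and transported along the pairing $e_i\sim e'_i$.

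The first step is to check the hypotheses on the weights. By construction both weightings assign to each pair $e\sim e'$ the same power of $q$. Since paired vertices have opposite colors, this is exactly the condition $w_i((v^\circ_jv^\bullet_{j'}))=w_i((v^\bullet_jv^\circ_{j'}))$. The pairing is a fixed-point-free involution that swaps colors, so the quotient is the M\"obius-type graph in $\mathbb{RP}^2$ and the covering hypothesis holds.

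The second step, which carries the substance, is to compare face alternating products of $w_1$ and $w_2$ on every face of $G_1$. The bounded square faces come in three kinds: the tiles of $G$, the tiles of $G'$, and the two tiles created by the gluings along $e_c$ and along the first south edge. For the SW-weighting, Remark~\ref{rem: face_prod_of_SW_weights} gives the value $q$ on every square tile. For the NE-weighting, I will check tile by tile that the alternating $q^{\pm1}$ pattern of Definition~\ref{defn: NE_weighting} also produces $q$. Because colors alternate on the $\mathbb{Z}^2$ lattice, a NE boundary edge labeled $q$ or $q^{-1}$ in a given position sits on the black-to-white or white-to-black clockwise side in such a way that its contribution is again $q$. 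The faces along the two gluing seams need separate treatment, and there I will use the convention that the last tile contains a $q^{-1}$ edge.

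That leaves the two remaining faces, the former interior face $f_0$ and exterior face $f_\infty$ of the two copies. Doubling turns them into faces whose boundary consists of the SE boundary of one copy together with the NW boundary of the other. I plan to avoid computing these directly. On the sphere, the product of all face alternating products is $1$ for any weighting, since each edge appears once in the numerator and once in the denominator. In addition, the antipodal symmetry sends one of these big faces to the other while swapping the roles of numerator and denominator through the color swap. Together these facts determine both big-face products from the square faces, so they must agree for $w_1$ and $w_2$.

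Once all face products agree, Lemma~\ref{lem: conditions_for_double_gauge} produces paired gauge transformations with $\lambda_t=q^{n_t}$ carrying $w_1$ to $w_2$. These are exactly doubled gauge transformations. I expect the main obstacle to be the bookkeeping at the gluing seams and at the last tile, in particular confirming that the NE alternation is consistent across the seam, which is where the phrase about the $q^{-1}$-weighted edge enters. I would verify these cases first on the example $\overline{\mathcal{G}}^2[1,2,2,1]$ of Figure~\ref{fig:doubled_gauge_ex} and then argue the general case by tracking the parity of the boundary position.
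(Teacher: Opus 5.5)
Your overall plan is the paper's: apply Lemma~\ref{lem: conditions_for_double_gauge} to the doubling graph and compare face alternating products face by face. The paper simply asserts that every square face has product $q$, and that each of the two large faces has product $q^{-(a_1+\cdots+a_{2n})}$ under both weightings. Your idea of getting the large faces for free from $\prod_f \hat y_f=1$ on $\mathbb{S}^2$ plus the pairing symmetry is a good way to justify that assertion. However, the symmetry relation you state is wrong, and it is exactly the step your argument rests on.

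The pairing $\sigma$ is a fixed-point-free involution of $\mathbb{S}^2$: it shifts the ring of tiles by $d:=a_1+\cdots+a_{2n}$ and swaps its two sides. It is therefore orientation-reversing. So it carries the clockwise traversal of a face $f$ to the counterclockwise traversal of $\sigma(f)$, which interchanges numerator and denominator. The color swap then interchanges them a second time. The two effects cancel, so for any weighting with $w(e)=w(e')$ one gets $\hat y_{\sigma(f)}=\hat y_f$, not $\hat y_f^{-1}$. With your version, the large faces $A$ and $B=\sigma(A)$ would satisfy $\hat y_A\hat y_B=1$. Combined with $\prod_f\hat y_f=1$, this forces $q^{2d}=1$, a contradiction, and nothing gets determined.

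With the corrected relation your argument goes through. Gluing along $e_c$ and along the first south edge identifies edges and creates no new tiles, so there is no third kind of square face. The square faces are exactly the $2d$ tiles of the two copies, each with product $q$. Hence $\hat y_A\hat y_B=q^{-2d}$ and $\hat y_A=\hat y_B$, which gives $\hat y_A=\hat y_B=q^{-d}$ for both $w_1$ and $w_2$. This is precisely the value the paper states. The symmetry is genuinely needed here: on the undoubled band graph only the product of the two large faces agrees, while $\hat y_{f_0}$ is $q^{-2}$ for SW and $q^{-4}$ for NE (Example~\ref{exmp:not_gauge_equiv_weights}).

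Finally, the seam issue you anticipate concerns only the glued edge. Under NE, the end edge $(a'b')$ must carry weight $1$, to agree with the unweighted first south edge it is identified with. Then the last tile has a single weighted NE edge, which for a planar band graph is the $q^{-1}$-weighted edge mentioned in the statement. Its product is then $q$, like every other tile.
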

\begin{proof}
    By Lemma~\ref{lem: conditions_for_double_gauge}, it suffices to check the face alternating products are the same. Note that the face products for each square face is $q$ for under both the SW-weighting and the NE-weighting. In addition, each outer face has a face product of $q^{-(a_1+...+a_{2n})}$, under both the SW-weighting and the NE-weighting.
\end{proof}

\begin{proof}[Combinatorial Proof of Proposition~\ref{prop: palindromy}]
    Let $\overline{\mathcal{G}}_1:=\overline{\mathcal{G}}[a_1,a_2,...,a_{2n-1},a_{2n}]$, and $\overline{\mathcal{G}}_2:=\overline{\mathcal{G}}[a_1,a_{2n},...,a_{3},a_{2}]$.

For each $q\in\mathbb{R}^+$ fixed, by Remark~\ref{rem:embedding_of_dimers} and Corollary~\ref{cor:SW_to_NE_weights}, there is some $n(q)\in \mathbb{Z}$, such that
\begin{equation}
    \overline{\mathcal{Z}}_{\overline{\mathcal{G}}_1,m}(q) = q^{n(q)}\overline{\mathcal{Z}}_{\overline{\mathcal{G}}_2,m}(q^{-1})
\end{equation}

We show this $n(q)$ does not depend on $q$. We get
\[n(q) = \log_q\left(\frac{\overline{\mathcal{Z}}_{\overline{\mathcal{G}}_1,m}(q)}{\overline{\mathcal{Z}}_{\overline{\mathcal{G}}_2,m}(q^{-1})}\right)=\frac{\ln\left(\frac{\overline{\mathcal{Z}}_{\overline{\mathcal{G}}_1,m}(q)}{\overline{\mathcal{Z}}_{\overline{\mathcal{G}}_2,m}(q^{-1})}\right)}{\ln{q}}\]
where it is defined and continuous on $(0,1)$ and $(1,\infty)$. By applying L'Hôpital's rule, we can show that $n(q)$ can be extended continuously to $q=1$ as well. Therefore, $n(q)$ is continuous on $\mathbb{R}_{\geq0}$. The image of this map $n$ is in $\mathbb{Z}\subset \mathbb{R}$. On the other hand, continuous functions map connected subsets of $\mathbb{R}$ to connected subsets of $\mathbb{R}$. Thus, $n$ must be a constant function.

Lastly, since $(a_1,a_2,...,a_{2n}) = (a_1,a_{2n},...,a_{2})$. We have $\overline{\mathcal{Z}}_{\overline{\mathcal{G}}_1,m}(q) = q^{N}\overline{\mathcal{Z}}_{\overline{\mathcal{G}}_2,m}(q^{-1})$, for all $q\in \mathbb{R}^+$, for some $N\in \mathbb{Z}$.
\end{proof}

\section{$\mathbf{g}$-vector and Dimer Partition Function}\label{sec: connections_to_cluster}

In this section, we explore how the dimer partition function operates within the context of cluster algebras from surfaces. In~\cite{MMSV24}, the authors demonstrate that dimer face polynomials coincide with $F$-polynomials in specific cluster algebras. We will show that the difference in the power of $q$ between the dimer partition function and the rank generating function of snake graphs or band graphs is governed by another essential component of the cluster algebra: the $\mathbf{g}$-vector.

In cluster algebras, the $\mathbf{g}$-vector is an integer vector assigned to each cluster variable that tracks its algebraic weight relative to a chosen initial cluster~\cite{FZ2007}. Formally, when the cluster algebra is endowed with principal coefficients, each cluster variable can be expressed as a multi-homogeneous polynomial; the $\mathbf{g}$-vector is defined as the degree vector of its unique lowest-degree monomial. In the context of cluster algebras arising from surfaces, $\mathbf{g}$-vectors carry a natural geometric interpretation, corresponding to the shear coordinates or intersection numbers of arcs with respect to a fixed triangulation.

In~\cite{banaian2024skein}, the authors gave the construction of $\mathbf{g}$-vectors in posets. Here, we convert this into snake graph language. 

Given a snake graph $\mathcal{G}_\gamma$ associated with an arc or closed curve $\gamma$, we let $\{T_1, \dots, T_d\}$ be the sequence of tiles. Each tile $T_k$ is labeled by an arc $\tau \in \mathcal{T}$. Let a triangulation $\mathcal{T}$ be $\mathcal{T}=\{\tau_1,\ldots\tau_N\}$ and $\{\tau_{i_1},\ldots,\tau_{i_d}\}$ be the arcs in $\mathcal{T}$ that is intersected by $\gamma$ in order. We work in $\mathbb{R}^N$ with the standard basis $\{\mathbf{e}_1, \dots, \mathbf{e}_N\}$ indexed by the arcs of the triangulation.

The components of the $g$-vector $\mathbf{g}_\gamma = -\mathbf{a}_\gamma + \mathbf{b}_\gamma + \mathbf{r}_\gamma$ can be interpreted via the combinatorial properties of $\mathcal{G}_\gamma$ and its distributive lattice of perfect matchings as follows:

\begin{itemize}
    \item \textbf{The vector $\mathbf{a}_\gamma$:} Let $P_{\text{min}}$ be the minimal perfect matching of $\mathcal{G}_\gamma$. We define $\mathbf{a}_\gamma = \sum_{i=1}^n c_i^{\min} \mathbf{e}_i$, where $c_i^{\min}$ is the number of tiles labeled by $\tau_j$ that are flippable in $P_{\text{min}}$.

    \item \textbf{The vector $\mathbf{b}_\gamma$:} Let $P_{\text{max}}$ be the maximal perfect matching of $\mathcal{G}_\gamma$. We define $\mathbf{b}_\gamma = \sum_{i=1}^n c_i^{\max} \mathbf{e}_i$, where $c_i^{\max}$ is the number of tiles labeled by $\tau_j$ that are flippable in $P_{\text{max}}$. In the distributive lattice, these correspond to the elements covered by the maximal matching. We refer to these as ``strict'' maximal elements if the corresponding poset element is covered by at least two elements.

    \item \textbf{The boundary vector $\mathbf{r}_\gamma$:} This vector accounts for the specific geometry of the first and last tiles, $T_1$ and $T_d$, based on the decorations of the curve endpoints. Let $s(\gamma)$ be the marked point where $\gamma$ starts:
    \begin{enumerate}
        \item If the start $s(\gamma)$ is plain, we add $\mathbf{e}_\tau$ to $\mathbf{r}_\gamma$ where $\tau$ is the clockwise neighbor of the internal edge $\tau_{i_1}$ in the first triangle $\Delta_0$.
        \item If $s(\gamma)$ is notched, we add $-\mathbf{e}_{\tau'}$ where $\tau'$ is the counter-clockwise neighbor of $\tau_{i_1}$ in $\Delta_0$.
        \item Similar logic is applied to the last tile $T_d$ relative to the internal edge $\tau_{i_d}$ and the final triangle $\Delta_d$.
    \end{enumerate}
    Boundary arcs of the surface do not contribute to $\mathbf{r}_\gamma$, as they have no associated cluster variables.
\end{itemize}

For any arc or closed curve $\gamma$ with an associated snake graph (or band graph) $\mathcal{G}$, we denote the $g$-vector as:
\[\mathbf{g}_\gamma = -\mathbf{a}_\gamma + \mathbf{b}_\gamma + \mathbf{r}_\gamma.\]

To characterize the power of $q$ that shifts the dimer partition function to the rank generating function, we consider the indices of the flippable tiles in the extremal matchings. Let $\mathcal{A} = \{\alpha_1, \alpha_2, \dots, \alpha_p\}$ be the ordered set of indices $i$ such that $a_i \neq 0$ (tiles flippable in the minimal matching). Similarly, let $\mathcal{B} = \{\beta_1, \beta_2, \dots, \beta_r\}$ be the ordered set of indices $i$ such that $b_i \neq 0$ (tiles flippable in the maximal matching).

Due to the zigzag structure of the fence poset, these indices strictly interlace in one of two configurations:
\begin{enumerate}
    \item $\alpha_1 < \beta_1 < \alpha_2 < \beta_2 < \dots$
    \item $\beta_1 < \alpha_1 < \beta_2 < \alpha_2 < \dots$
\end{enumerate}

We define the exponent $t_\gamma$ as the cumulative sum of the intervals between flippable positions in the following manner:
\[
    t_\gamma = 
    \begin{cases} 
    \sum_{j} (\beta_j - \alpha_j) & \text{if } \alpha_1 < \beta_1, \\
    \sum_{j} (\beta_{j+1} - \alpha_j) & \text{if } \beta_1 < \alpha_1.
    \end{cases}
\]
This value characterizes the weight of the minimal element within the higher dimer $m$-dimer cover $\mathcal{D}_m(G)$.

\begin{prop}
The $q$-weight of the minimal matching $\hat{0}$ in the higher dimer cover $\mathcal{D}_m(G)$, which accounts for the shift between the dimer partition function and the rank generating function of the associated poset, is given by:
\[ \omega(\hat{0}_{\mathcal{D}_m(G)}) = q^{mt_\gamma}. \]
\end{prop}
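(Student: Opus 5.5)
The plan is to reduce the statement to the case $m=1$ and then compute the SW-weight of the minimal perfect matching of the snake graph block by block.

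\emph{Step 1: reduction to $m=1$.} The minimal $m$-dimer cover is $m$ times the minimal perfect matching, i.e.\ $\text{mult}_{\hat{0}_{\mathcal{D}_m(G)}} = m\cdot\text{mult}_{P_{\min}}$ with $P_{\min}=\hat{0}_{\mathcal{D}_1(G)}$. To see this, note that $m\cdot P_{\min}$ is an $m$-dimer cover. Its height function with respect to any ground cover equals $m$ times the height of $P_{\min}$, using the defining sum along dual paths. By the characterization in Lemma~\ref{lem: inequalities_for_heights}, a cover has no down-flip exactly when at every face some counterclockwise-type boundary edge has multiplicity zero. $P_{\min}$ has this property, and scaling by $m$ preserves which multiplicities vanish, so $m P_{\min}$ admits no down-flip. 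Since $\mathcal{D}_m(G)$ is a distributive lattice (Theorem~\ref{thm: dist_lattice_of_m}, with $(*)_m$ from Lemma~\ref{lem: property_star_for_snakes}), a finite lattice has a unique element with no down-flip, namely $\hat{0}$. Hence $\omega(\hat{0}_{\mathcal{D}_m(G)})=\omega(P_{\min})^m$, and it suffices to show $\omega(P_{\min})=q^{t_\gamma}$.

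\emph{Step 2: the minimal matching block by block.} With $m=1$, I would describe $P_{\min}$ explicitly on $\mathcal{G}[a_1,\dots,a_n]$ using the zigzag-block decomposition. Inside each block the minimal matching alternates in a fixed pattern determined by the orientation of the square faces. Only boundary south/west edges carry nontrivial SW weight, namely $q$ in odd blocks and $q^{-1}$ in even blocks (Definition~\ref{defn: Ovsienko_SW_weighting}). So $\omega(P_{\min})=q^{\#\{\text{odd-block SW boundary edges in }P_{\min}\}-\#\{\text{even-block ones}\}}$.

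\emph{Step 3: matching to the flippable-tile data.} Tiles flippable in $P_{\min}$ (the $\alpha$'s) are the local minima of the fence poset, and those flippable in $P_{\max}$ (the $\beta$'s) are the local maxima. These are exactly the positions where the zigzag changes between blocks, which gives the interlacing. Between consecutive $\alpha_j$ and the next $\beta$ the matching runs along a monotone stretch of the fence. In such a stretch each tile contributes exactly one weighted boundary edge in $P_{\min}$, and these contributions telescope. So a stretch of length $\beta-\alpha$ contributes $q^{\beta-\alpha}$, while stretches going from a $\beta$ to the next $\alpha$ contribute $q^0$ because their $q$ and $q^{-1}$ edges cancel. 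Summing over stretches yields $t_\gamma$ in each of the two interlacing cases; the case $\beta_1<\alpha_1$ just shifts the pairing of indices.

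\emph{Main obstacle.} The hard part is Step 3: carefully bookkeeping which SW-weighted boundary edges lie in $P_{\min}$ at block junctions, including the exceptional unweighted south edge of the first tile and the last tile. I would first verify the claim on one block $\mathcal{B}_k$ and on two glued blocks by direct inspection. I would then induct on the number of blocks, tracking how appending a block changes both $t_\gamma$ and the exponent of $\omega(P_{\min})$. As a cross-check, I would use Lemma~\ref{lem: dimer_face_to_partition} together with Theorem~\ref{thm:weighted_snake_graph_counting}, comparing the constant-term shift there with $t_\gamma$.
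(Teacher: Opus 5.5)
Your Step~1 is correct, and more careful than the paper. $m\cdot P_{\min}$ admits no down-flip, so it equals $\hat 0_{\mathcal{D}_m(G)}$ and $\omega(\hat 0_{\mathcal{D}_m(G)})=\omega(P_{\min})^m$. The gap is in Step~3: its bookkeeping is backwards.

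The snake graph grows only north and east. So every tile other than the first shares exactly one of its south/west edges with its predecessor and has the other on the boundary. Tile~$1$ has two boundary south/west edges, one of which is the unweighted exception. Hence the SW-weighting has exactly one weighted edge per tile: $q$ for tiles of odd-indexed blocks, $q^{-1}$ for tiles of even-indexed blocks. Let $d$ be the number of tiles and $s$ the number of tiles in even blocks. Then every perfect matching has $q$-exponent in $[-s,d-s]$. On the other hand, $\hat y_f=q$ on every tile and each tile is flipped exactly once between $\hat 0$ and $\hat 1$, so $\omega(\hat 1)/\omega(P_{\min})=q^{d}$. This forces $\omega(P_{\min})=q^{-s}$: $P_{\min}$ uses every $q^{-1}$-edge and no $q$-edge. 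In fence language, the tiles after $\alpha_j$ up to the next local maximum form an odd block and contribute $q^0$, not $q^{\beta-\alpha}$. A descending run from a local maximum to the next local minimum contributes $q^{-(\text{its length})}$, and nothing cancels.

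Consequently the target formula itself fails, reading $\alpha_j,\beta_j$ as positions of tiles flippable in $P_{\min}$ and $P_{\max}$. No bookkeeping can prove it.

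\begin{itemize}
\item Your planned one-block check already breaks. For $\mathcal{B}_2$ alone ($\mathcal{G}[3]$) we have $\alpha=\{1\}$, $\beta=\{2\}$, $t_\gamma=1$. Yet $P_{\min}$, consisting of the horizontal edges of tile~$1$ and the east edge of tile~$2$, has weight $1$.
\item An even-length example is $\mathcal{G}[2,2]$, three tiles in a row. The west edge of tile~$1$ and the south edge of tile~$2$ have weight $q$, and the south edge of tile~$3$ has weight $q^{-1}$. Then $\mathcal{Z}_{G,1}=q^{-1}(1+2q+q^2+q^3)$. Here $P_{\min}$ consists of the horizontal edges of tiles $1$ and $3$ and has weight $q^{-1}$. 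But $\alpha=\{1,3\}$ and $\beta=\{2\}$ give $t_\gamma=1$, predicting $q^{1}$.
\end{itemize}

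The paper's own proof is a one-line heuristic of the same type and does not address this. What your approach actually proves is $\omega(\hat 0_{\mathcal{D}_m(G)})=q^{-ms}$, with $s$ the total length of the descending runs. Explicitly, $s=\sum_j(\alpha_j-\beta_j)$ if $\beta_1<\alpha_1$, and $s=\sum_j(\alpha_{j+1}-\beta_j)$ if $\alpha_1<\beta_1$. This agrees with the normalization $q^{-m(a_2+a_4+\cdots+a_{2n}-1)}$ used for the minimal cover in the proof of Theorem~\ref{thm: partition_fcn_is_trace}.
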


\begin{proof}
The total weight of the minimal matching is determined by the relative positions of the forced edges. Specifically, $\sum \alpha_j$ corresponds to the total count of $q^{-1}$ contributions, while $\sum \beta_j$ corresponds to the total count of $q$ contributions within the lattice structure. When $\alpha_1 < \beta_1$, the indexing for the $\beta$ terms begins at $j=1$; conversely, if $\beta_1 < \alpha_1$, the indices for the $\beta$ terms are shifted by one to account for the relative offset in the color-grid weighting.
\end{proof}

\section{Future Directions}\label{sec: future}

One direction for future investigation is proving the unimodality of rank generating functions for higher dimer covers on snake graphs~\cite{BOSZ26}. Establishing this unimodality, however, needs a departure from the classical techniques typically used for snake graphs~\cite{OR23}. Unlike the single cover case, the rank generating functions of band graphs associated with higher dimer covers are not palindromic in general. As shown in~\Cref{sec: palindromy}, the generating function of $\overline{\mathcal{G}}[1,2,2,1]$ is not palindromic. This makes traditional methodologies—which rely on the existence of palindromic and unimodal structures—inapplicable to the $m$-dimer cover framework. Consequently, future work will focus on identifying new structural properties to establish unimodality.

We did investigate the rank generating functions of band graph and we found that they are also unimodal and the maximum coefficients lie on the half of the maximum degree of the polynomial. The unimodality of the band graph for single cover is shown in~\cite{OR23,OOR24}.

Another promising research direction involves situating these band graph constructions within other combinatorial frameworks~\cite{BOSZ26}. Specifically, we aim to investigate the correspondence between higher dimer covers on band graphs and alternative models such as dual snake graphs, lattice paths on dual snake graphs, and $P$-partitions.

\printbibliography

\end{document}